\documentclass[11pt]{amsart}
\usepackage{fullpage,amsmath,amsthm,amsfonts,amssymb,
graphicx,amscd,float,hyperref,enumitem,setspace}
\usepackage{comment}
\usepackage{graphicx,xypic}

\newtheorem{thm}{Theorem}[section]

\newtheorem{theor}{Theorem} 

\newtheorem{theore}{Theorem} 

\newtheorem{corol}{Corollary} 

\newtheorem{coroll}{Corollary}

\newtheorem{lem}[thm]{Lemma}

\newtheorem{cor}[thm]{Corollary}
\newtheorem{corollary}[thm]{Corollary}
\newtheorem{prop}[thm]{Proposition}

\newtheorem{cl}[thm]{Claim}
\theoremstyle{definition}
\newtheorem{rk}[thm]{Remark}
\newtheorem{conv}[thm]{Convention}
\newtheorem{df}[thm]{Definition}
\newtheorem{pd}[thm]{Proposition-Definition}

\newtheorem*{claim}{Claim}

  \newcommand{\F}{\mathbb{F}}

  \newcommand{\N}{\mathbb{N}}
  
  \newcommand{\PP}{\mathbb{P}}
  
  \newcommand{\R}{\mathbb{R}}

  \newcommand{\Z}{\mathbb{Z}}

\newcommand{\iwp}{IW(\varphi)}

\newcommand{\vphi}{\varphi}
\newcommand{\veps}{\varepsilon}
\newcommand{\ol}{\overline}
\newcommand{\from}{\colon}

\newcommand{\out}{\textup{Out}(F_r)}
\newcommand{\diam}{\textup{diam}}
\newcommand{\norm}[1]{|#1|}
\DeclareMathOperator{\Out}{Out}

\newcommand{\mL}{\gamma}

\newcommand{\G}{\Gamma}
\newcommand{\uos}{\overline{\rm{CV}}}
\newcommand{\os}{\rm{CV}}
\newcommand{\cv}{\os}

\newcommand{\AT}{\mathcal{AT}}
\newcommand{\ue}{\mathcal{UE}}
\newcommand{\FF}{\mathcal{FF}}
\newcommand{\val}{\rm{val}}

\newcommand{\ds}{d_{\mathrm{sym}}}
\newcommand{\rmu}{\check \mu}
\newcommand{\rnu}{\check \nu}
\newcommand{\dL}{d_{\rm{CV}}}
\newcommand{\smgp}{\langle \rm{supp}(\mu)\rangle_+}
\newcommand{\Mod}{\mathrm{Mod}}

\newcommand{\gp}[3]{(#2 \vert #3)_{#1}} 

\newcommand{\ind}{\mbox{ind}}

\begin{document}

\title[Random outer automorphisms of free groups]{Random outer automorphisms of free groups:\\ Attracting trees and their singularity structures}

\author{Ilya Kapovich, Joseph Maher, Catherine Pfaff, and Samuel J. Taylor}

\address{Department of Mathematics, University of Illinois at Urbana-Champaign\newline
  \indent 1409 West Green Street, Urbana, IL 61801
  \newline \indent  {\url{https://faculty.math.illinois.edu/~kapovich}}, }
  \email{\tt kapovich@math.uiuc.edu}

\address{ CUNY College of Staten Island and CUNY Graduate Center \newline
  \indent
2800 Victory Boulevard, Staten Island, NY 10314
  \newline
  \indent  {\url{http://www.math.csi.cuny.edu/~maher/}}, }
  \email{\tt joseph.maher@csi.cuny.edu}

\address{Department of Math \& Stats, Queen's University \newline
  \indent
Jeffery Hall, 48 University Ave.,
Kingston, ON Canada, K7L 3N6
  \newline
  \indent  {\url{http://math.ucsb.edu/~cpfaff/}}, }
  \email{\tt cpfaff@math.ucsb.edu}

\address{Department of Mathematics, Temple University \newline
  \indent
1805 Broad St, Philadelphia, PA 19122
  \newline
  \indent  {\url{https://math.temple.edu/~samuel.taylor/}}, }
  \email{\tt samuel.taylor@temple.edu}

  \dedicatory{Catherine Pfaff would like to dedicate this paper to her father Dr. Roland Pfaff, zikhrono livrakha.}

  \thanks{The first named author was supported by the individual NSF
    grants DMS-1405146 and DMS-1710868. The second named author was
    supported by Simons Foundation and PSC-CUNY.
    The fourth named author was partially supported by NSF grant
    DMS-1744551. All authors acknowledge support from U.S. National
    Science Foundation grants DMS 1107452, 1107263, 1107367 ``RNMS:
    GEometric structures And Representation varieties'' (the GEAR
    Network).}

\subjclass[2010]{Primary 20F65, Secondary 57M, 37B, 37D}

\begin{abstract}
We prove that a ``random" free group outer automorphism is an ageometric fully irreducible outer automorphism
whose ideal Whitehead graph is a union of triangles.
In particular, we show that its attracting (and repelling) tree is a nongeometric $\R$--tree all of whose branch points are trivalent.
\end{abstract}

\keywords{Free group, random walk, Outer space, free group automorphisms, train track maps}

\maketitle

\section{Introduction}

In recent years advances in the theory of random walks have produced significantly better understanding of algebraic and geometric properties of ``random" 
elements in various groups.  For example, a random walk on the mapping class group of a surface gives rise to a pseudo-Anosov element with asymptotic probability one, as shown by Rivin \cite{r08} and Maher \cite{m11}, and a random walk on $\out$ gives rise to a fully irreducible element with asymptotic probability one, as shown by Rivin \cite{r08}. However, being pseudo-Anosov for elements of $\Mod(\Sigma)$ or being fully irreducible for elements of $\out$ are only the basic classification properties of elements of these groups. Pseudo-Anosovs and fully irreducibles have much more interesting further structural stratification coming from their ``index" or ``singularity'' properties.

Recently, Gadre and Maher provided such finer information in the mapping class group setting \cite{gm16}. They showed that for a random walk $(w_n)$ on $\Mod(\Sigma)$ defined by a measure $\mu$
(under appropriate natural assumptions on $\mu$ and $\Sigma$)
the mapping class $w_n$ is a pseudo-Anosov whose axis lies in the \emph{principal stratum}
with probability going to $1$ as $n \to \infty$.
Roughly, the unit cotangent bundle to $\mathcal T(\Sigma)$ is naturally identified with the space of unit area holomorphic quadratic differentials on $\Sigma$. For a quadratic differential to be \emph{principal}, or to be in the \emph{principal stratum}, means that the differential has simple zeros (and simple poles at punctures).
The axis of a pseudo-Anosov $\varphi$ is principal if its associated quadratic differentials are principal.
If we assume that $\Sigma$ is closed, this means that
the dual $\mathbb R$-trees (see \cite[Ch 11]{k09}) $T_+(\varphi)$ and $T_-(\varphi)$ associated to the stable and unstable invariant foliations are \emph{trivalent}. To be precise, for a point $p$ in an $\mathbb R$-tree $T$, the valency $\val_T(p)$ of $p$ in $T$ is the number of connected components of $T\setminus\{p\}$. A point $p\in T$ is called a \emph{branch point} if $\val_T(p)\ge 3$, and an $\mathbb R$-tree $T$ is trivalent if for every branch point $p\in T$ we have $\val_T(p)=3$.
Note that, for topological reasons, the number of (three-prong) singularities of the stable/unstable foliation of $w_n$ (which is the same as the number of $\Mod(\Sigma)$-orbits of branch-points in the corresponding dual $\mathbb R$-tree) is then fixed and depends only on the topology of the surface $\Sigma$.

In this paper we study similar questions for ``random'' elements of $\out$, the outer automorphism group of the free group $F_r$, where $r\ge 3$. Recall that an element $\vphi\in\out$ is called \emph{fully irreducible} if no positive power of $\vphi$ leaves invariant a nontrivial proper free factor of $F_r$. Fully irreducibles serve as the main counterparts to pseudo-Anosovs in the $\out$ setting and play a crucial role in the study of $\out$. The index theory of $\out$ goes back to the 1990s~\cite{gl95,gjll} and turns out to be considerably more complicated than in the mapping class group setting. For simplicity assume that $\vphi\in \out$ is fully irreducible. Then the original definition of the \emph{index} of $\vphi$ involves the dynamics of various representatives of $\vphi$ in ${\rm Aut}(F_r)$ when acting on $\partial F_r$, and specifically the structure of periodic points.

One of the key differences between the mapping class group case and the $\out$ case is that the value of $i(\vphi)$ is not fixed as a function of the rank $r$ and in general one only has the inequality $0>i(\vphi)\ge 1-r$. The values of $i(\vphi)$ actually help us classify outer automorphisms. For so-called \emph{geometric} fully irreducibles $\vphi\in \out$, that is, those that are induced by pseudo-Anosov homeomorphisms of compact surfaces with one boundary component and with fundamental group isomorphic to $F_r$, one has $i(\vphi)=1-r$.  Nongeometric fully irreducible outer automorphisms come in two flavors: \emph{parageometric} ones, for which $i(\vphi)=1-r$, and \emph{ageometric} ones, for which $i(\vphi)>1-r$.

Another invariant of a fully irreducible is its associated \emph{attracting tree} $T_+^{\vphi}$ in the boundary $\partial \cv$ of the Culler--Vogtmann Outer space $\cv$. These two invariants, namely $i(\vphi)$ and $T_+^{\vphi}$, are interconnected. For a nongeometric fully irreducible $\vphi$, we have that $T_+^{\vphi}$ is an $\mathbb R$-tree endowed with a free isometric action of $F_r$ with dense orbits, and with only finitely many $F_r$-orbits of branch points~\cite{gl95}. One can interpret the index properties of $\vphi$ in terms of this action. In particular, $i(\vphi)$ is equal to $\sum \left(1-\frac{\val(p)}{2}\right)$ taken over all representatives $p$ of $F_r$-orbits of branch points in $T_+^{\vphi}$.  It is also known that $i(\vphi)=1-r$ (i.e. $\vphi$ is geometric or parageometric) if and only if the tree $T_+^{\vphi}$ is ``geometric" in the sense of~\cite{LevittPaulin}, that is, $T_+^{\vphi}$ is dual to a foliation on some finite 2-complex.

The index theory of $\out$ contains some additional, finer data. First, for a nongeometric fully irreducible $\vphi\in\out$, it records the ``index list" of $\vphi$, which we can think of as the list of valencies/degrees of $F_r$-representatives of branch points of $T_+^{\vphi}$. Second, at each branch point $p$ of $T_+^{\vphi}$, apart from the degree of $p$ in $T_+^{\vphi}$, one can also record the ``turns" taken by the attracting lamination $\Lambda_{\vphi}$ at $p$ in $T_+^{\vphi}$. The union of this data over representatives of all $F_r$-orbits of branch points in $T_+^{\vphi}$ is called the \emph{ideal Whitehead Graph} $\iwp$, which was introduced by Handel and Mosher in \cite{hm11}. As with the index, in practice the ideal Whitehead graph $\iwp$ can be read-off using a train track representative of $\vphi$ and it is often defined in such a way. See \S \ref{ss:Trees} below for additional background, definitions, and references regarding the index theory for $\out$.

\subsection{Main results}
Before stating our main results, we introduce a few more definitions (even further notation and explanations follow the statement). For $r\ge 3$, we denote by $\Delta_r$ the graph which is the disjoint union of  $2r-3$ triangles. We say that an ageometric fully irreducible $\vphi\in\out$ is \emph{principal} if $\iwp\cong \Delta_r$. We say that an ageometric fully irreducible $\vphi\in\out$ is \emph{triangular} if $\iwp$ is isomorphic to the union of several components of $\Delta_r$, that is, if $\iwp$ is the disjoint union of $\le 2r-3$ triangles. Note that for a principal $\vphi$ we have $i(\vphi)=\frac{3}{2}-r$ and for a triangular $\vphi$ we have $i(\vphi)\ge \frac{3}{2}-r$.  We can now state our main result:

\begin{theor}\label{T:main}
Let $r\ge 3$ and let $\mu$ be a probability distribution on
$\Out(F_r)$ which is nonelementary and has bounded support for the action on $\FF$, and let
$(w_n)$ be the random walk determined by $\mu$.  Suppose
that $\smgp$ contains $\vphi^{-1}$ for some principal fully irreducible $\varphi\in\out$. Then $w_n$ is a triangular fully irreducible outer automorphism with probability going to $1$ as $n \to \infty$.
\end{theor}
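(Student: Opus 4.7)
The plan is to split the argument into two phases. Phase one establishes that $w_n$ is an ageometric fully irreducible with probability tending to $1$; phase two controls the ideal Whitehead graph $IW(w_n)$, forcing it to be a disjoint union of triangles.

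For phase one, full irreducibility with asymptotic probability $1$ follows from Rivin and Maher's results on random walks on $\out$. For ageometricity, one uses that the nonelementary bounded-support hypothesis on $\mu$ ensures almost sure convergence of $(w_n)$ to a point in $\partial \FF$, and by existing identifications of this boundary with arational trees, the attracting tree $T_+^{w_n}$ inherits genericity properties from the limit. The geometric and parageometric loci are ``thin'' in this boundary, and the presence of $\vphi^{-1}$ (whose attracting tree $T_-^{\vphi}$ is nongeometric, since $\vphi$ is ageometric) in $\smgp$ biases the walk away from them, so $w_n$ is ageometric with probability $\to 1$.

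Phase two is the heart of the argument. Since $\vphi^{-1} \in \smgp$, there exist fixed $h_1,\dots, h_\ell \in \mathrm{supp}(\mu)$ with $h_1 \cdots h_\ell = \vphi^{-1}$. By a Borel--Cantelli / law of large numbers argument, with probability tending to $1$ the word $w_n$ contains arbitrarily long consecutive blocks equal to $\vphi^{-k}$ for any fixed $k$. One then carries out a train-track analysis: a train track representative of $w_n$ should be related, via folding sequences, to iterates of a train track representative of $\vphi^{-1}$. The local Whitehead graphs at branch points of the train track of $w_n$ are then controlled by those of $\vphi$, and since $\vphi$ is principal each such graph is a triangle; hence each component of $IW(w_n)$ embeds into a component of $\Delta_r$, forcing $IW(w_n)$ to be a disjoint union of triangles.

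The main obstacle is the train-track/folding analysis in phase two---specifically, proving that in a decomposition $w_n = a \vphi^{-k} b$ with $k$ large, the local Whitehead graphs of $w_n$ are forced to be subgraphs of those of $\vphi^{-1}$ and do not acquire new edges from the tail factors $a, b$. One strategy is to use upper semicontinuity of the valence of branch points under convergence in $\partial \cv$ of the attracting trees $T_+^{w_n}$ to limits related to $T_-^{\vphi}$. Ensuring that such limits behave rigidly enough, and that local Whitehead graphs at branch points pass to the limit correctly, is the technically demanding step and will likely require careful combinatorial input from train track theory together with the convergence results for random walks on $\out$.
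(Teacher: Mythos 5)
Your Phase One is unnecessary and partially incorrect. In the paper's proof ageometricity is not established separately — it is a \emph{consequence} of being triangular (a disjoint union of $\le 2r-3$ triangles forces $i(\vphi)>1-r$, hence ageometric). Moreover, the claim that ``$T_-^\vphi$ is nongeometric, since $\vphi$ is ageometric'' is false: ageometricity of $\vphi$ says that $T_+^\vphi$ is nongeometric, and ageometricity is not preserved under taking inverses (a parageometric fully irreducible has an ageometric inverse, by Handel--Mosher). The hand-waving claim that geometric/parageometric loci are ``thin'' in $\partial\FF$ and that $\vphi^{-1}$ ``biases'' the walk away from them is unsubstantiated and plays no role in the actual argument.

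The real gap is in Phase Two. It is true (for fixed $k$, by a standard Borel--Cantelli argument) that the sample path contains a block $\vphi^{-k}$ with probability $\to 1$. But a decomposition $w_n = a\,\vphi^{-k}\,b$ with $a,b$ arbitrary large random words does \emph{not} by itself force the train track or axis of $w_n$ to resemble that of $\vphi^{-1}$; the conjugation/tail factors can completely change the geometry. What is needed, and what the paper supplies, is a chain of three geometric results you do not have: (i) a random walk estimate (Proposition~\ref{prop:MS}, a version of Maher--Sisto/Dahmani--Horbez) showing that the quasi-axis of $w_n$ in the Gromov-hyperbolic space $\FF$ has a long \emph{oriented match} with the quasi-axis of $\vphi^{-1}$; (ii) a \emph{bounded geodesic image property} for greedy folding axes in $\cv$ (Theorem~\ref{t:BGIP}, Corollary~\ref{cor:BGI}), which transfers the match in $\FF$ back to genuine fellow-travelling of the folding axes inside Outer space, where the index theory lives; and (iii) the ``stability'' theorem for triangular/principal fully irreducibles of Algom-Kfir--Kapovich--Pfaff, strengthened via the \emph{lone axis} property of $\vphi$ into a coarse statement (Corollary~\ref{cor:close_enough}): any simple folding axis that $\rho$-fellow-travels $\gamma_\vphi$ for long enough belongs to a triangular fully irreducible. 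Your appeal to ``upper semicontinuity of valence of branch points'' gestures at (iii), but without (i) and (ii) you have no way to get the axis of $w_n$ close to that of $\vphi$ in $\cv$ in the first place, and without the lone axis property the stability statement only holds with an $\epsilon$ that you cannot control, which is not enough for the random walk argument.
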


\stepcounter{theor}

Here $\FF$ is the \emph{free factor graph} for $F_r$, which is the free group analog of the curve graph (see \S \ref{sec:ff}). The main point to remember for now is that $\FF$ is Gromov-hyperbolic and that $\psi\in\out$ acts loxodromically on $\FF$ if and only if $\psi$ is fully irreducible, as shown by Bestvina and Feighn \cite{bf11}. Above, $\smgp$ denotes the subsemigroup of $G$ generated by the support of $\mu$. The action on $\FF$ is considered \emph{nonelementary} if $\smgp$ contains two independent loxodromic isometries of $\FF$.


Theorem~\ref{T:main} has several important consequences:

\begin{corol}\label{C:rw}
Let $r\ge 3$ and let $\mu$ be a probability distribution on $\Out(F_r)$ which is nonelementary and has bounded support for the action on $\FF$, and let $(w_n)$ be the random walk determined by $\mu$. Suppose that $\smgp$ contains $\vphi^{-1}$ for some principal fully irreducible $\vphi\in\out$.

Then, with probability going to $1$ as $n \to \infty$, $w_n$ is an ageometric fully irreducible outer automorphism $\vphi_n$ satisfying:
\begin{enumerate}
\item each component of the ideal Whitehead graph $\iwp$ is a triangle;
\item each branchpoint of $T_+^{\vphi_n}$ is trivalent;
\item $i(\vphi_n)>1-r$ and $ind_{geom} (T_+^{\vphi_n})<2r-2$ (here $i(\vphi_n)$ is the rotationless index of $\vphi_n$ and $ind_{geom} (T_+^{\vphi_n})$ is the geometric index of $\vphi_n$);
\item the tree $T_+^{\vphi_n}$ is nongeometric.
\end{enumerate}
\end{corol}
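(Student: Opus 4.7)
The plan is to derive Corollary~B directly from Theorem~A by unpacking the definition of ``triangular'' through the standard dictionary, recalled in the introduction and expanded on in \S\ref{ss:Trees}, between the ideal Whitehead graph of a nongeometric fully irreducible $\vphi$ and the branch point structure of its attracting tree $T_+^{\vphi}$. By Theorem~A I may restrict to the event, of probability tending to $1$ as $n\to\infty$, on which $w_n$ is a triangular fully irreducible $\vphi_n$, so that $IW(\vphi_n)$ is a disjoint union of some number $0\le k\le 2r-3$ of triangles. Conclusion~(1) is then immediate from the definition of triangular.

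For (2) I would invoke the standard Handel-Mosher correspondence, to be reviewed in \S\ref{ss:Trees}, between connected components of $IW(\vphi_n)$ and $F_r$-orbits of branch points of $T_+^{\vphi_n}$, under which the valency of each branch point equals the number of vertices in the corresponding component of $IW(\vphi_n)$. Since each component of $IW(\vphi_n)$ is a triangle on three vertices, every branch point of $T_+^{\vphi_n}$ is trivalent. Note that minimality together with full irreducibility forces $k\ge 1$, so branch points genuinely exist.

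Parts (3) and (4) then follow by direct substitution into the index formulas recalled in the introduction. The rotationless index $i(\vphi_n)=\sum_{[p]}\bigl(1-\tfrac{1}{2}\val(p)\bigr)$ evaluates to $-k/2$, and the bound $k\le 2r-3$ yields $i(\vphi_n)\ge \tfrac{3}{2}-r>1-r$. The geometric index $ind_{geom}(T_+^{\vphi_n})=\sum_{[p]}\bigl(\val(p)-2\bigr)$ evaluates to $k$, which is at most $2r-3<2r-2$. Nongeometricity of $T_+^{\vphi_n}$ and ageometricity of $\vphi_n$ are then equivalent, by the Levitt-Paulin criterion recalled in the introduction, to the strict inequality $i(\vphi_n)>1-r$ already established. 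All four conclusions are thus bookkeeping translations of ``triangular'' into the language of $\R$-trees and indices; no substantive obstacle appears at this stage, and the real content of the corollary is carried entirely by Theorem~A.
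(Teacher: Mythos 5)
Your proposal is correct and follows essentially the same route as the paper: invoke Theorem~A to land on the event that $w_n$ is triangular, read off (1) from the definition, use the Handel--Mosher correspondence between components of $IW(\vphi_n)$ and orbits of branch points of $T_+^{\vphi_n}$ (formalized in the paper as Proposition~\ref{p:trivalent}) for (2), and derive (3)--(4) from the index formulas together with the definition of ageometric. The only cosmetic difference is that you compute $i(\vphi_n)$ and $\gind(T_+^{\vphi_n})$ explicitly from the triangle count, while the paper obtains the geometric-index bound by citing Coulbois--Hilion; both routes are sound.
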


\stepcounter{theor}

In fact, our arguments apply equally well to the inverse of a random outer automorphism:
\begin{corol}\label{C:two}
Let $r\ge 3$ and let $\mu$ be a probability distribution on
$\Out(F_r)$ which is nonelementary and has bounded support for the action on $\FF$, and let
$(w_n)$ be the random walk determined by $\mu$.  Suppose
that $\smgp$ contains a pair
of elements $\vphi$ and $\psi$, such that both $\vphi$ and $\psi^{-1}$
are principal fully irreducible outer automorphisms.

Then, with probability approaching $1$ as
$n \to \infty$, both $w_n$ and $w_n^{-1}$ are triangular fully
irreducible outer automorphisms. In particular, both fixed trees of $w_n$ are trivalent.
\end{corol}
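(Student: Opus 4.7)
My plan is to deduce this corollary from Theorem \ref{T:main} by applying that theorem twice---once to the walk $(w_n)$ and once to the reflected walk driving $w_n^{-1}$---and then to read off the trivalence of the fixed trees from the triangular form of the ideal Whitehead graph.

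For the walk $(w_n)$ itself, the hypotheses of Theorem \ref{T:main} are verified by taking $\psi^{-1}$ to play the role of the principal fully irreducible in that statement: by assumption $\psi^{-1}$ is principal fully irreducible, and its inverse $\psi$ lies in $\smgp$. Theorem \ref{T:main} thus immediately gives that $w_n$ is triangular fully irreducible with probability tending to $1$.

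To handle $w_n^{-1}$, I would introduce the reflected measure $\check\mu$ on $\out$ defined by $\check\mu(g)=\mu(g^{-1})$. If $w_n=g_1\cdots g_n$ with i.i.d.\ increments $g_i$ of law $\mu$, then $w_n^{-1}=g_n^{-1}\cdots g_1^{-1}$ has the same distribution as the $n$-th step of the $\check\mu$-walk, so it suffices to apply Theorem \ref{T:main} to $\check\mu$. Three hypotheses must be checked. Nonelementarity passes to $\check\mu$ because the map $g\mapsto g^{-1}$ sends any pair of independent loxodromic isometries of $\FF$ to another such pair, and $\langle\mathrm{supp}(\check\mu)\rangle_+$ is precisely the set of inverses of elements of $\smgp$. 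Bounded support for the action on $\FF$ passes through because that action is by isometries, so $d_{\FF}(x,g^{-1}x)=d_{\FF}(gx,x)$ stays uniformly bounded as $g$ ranges over $\mathrm{supp}(\mu)$. Finally, taking the principal fully irreducible of Theorem \ref{T:main} this time to be $\vphi$ itself (which is principal by hypothesis), the requirement $\vphi^{-1}\in\langle\mathrm{supp}(\check\mu)\rangle_+$ reduces to $\vphi\in\smgp$, which holds by assumption. Theorem \ref{T:main} applied to $\check\mu$ then yields that $w_n^{-1}$ is triangular fully irreducible with probability tending to $1$, and intersecting the two probability-one events gives that both $w_n$ and $w_n^{-1}$ are triangular fully irreducible with probability tending to $1$.

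The trivalence statement is then immediate. For any triangular fully irreducible $\Phi\in\out$, every component of $IW(\Phi)$ is a triangle and so has exactly three vertices; since components of $IW(\Phi)$ correspond to $F_r$-orbits of branch points of $T_+^{\Phi}$ with vertex count equal to the valency of the corresponding branch point, every branch point of $T_+^{\Phi}$ is trivalent. Applying this to $\Phi=w_n$ and $\Phi=w_n^{-1}$, and using that the attracting tree of $w_n^{-1}$ coincides with the repelling tree of $w_n$, shows that both fixed trees of $w_n$ are trivalent. The only subtle bookkeeping in this proof is in matching the asymmetric hypothesis of Theorem \ref{T:main}, which mixes an element of $\smgp$ with the inverse of a principal fully irreducible, to the two separate symmetric inputs $\vphi\in\smgp$ principal and $\psi^{-1}$ principal provided by Corollary \ref{C:two}; once this matching is pinned down, the rest of the argument is formal.
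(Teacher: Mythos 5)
Your proof is correct and follows essentially the same route as the paper: apply Theorem~\ref{T:main} to $\mu$ (using $\psi^{-1}$ as the principal element) and then to the reflected measure $\check\mu$ (using $\vphi$), noting that $w_n^{-1}$ is distributed according to $\check\mu^{*n}$, and intersect the two asymptotically full-probability events. The paper's proof is terser and leaves the verification that $\check\mu$ satisfies the hypotheses of Theorem~\ref{T:main} implicit, whereas you spell it out; the trivalence conclusion you derive from Proposition~\ref{p:trivalent} is likewise implicit in the paper.
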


We remark that
it is proved in \cite[Example 6.1]{stablestrata} that for each $r\ge 3$ there exists a principal $\vphi\in\out$.  Thus the assumption in Theorem~\ref{T:main} and in Corollary~\ref{C:rw} that $\smgp$ contain the inverse of some principal outer automorphism is automatically satisfied if, for example, $\smgp=\out$ or if $\smgp$ is a subgroup of finite index in $\out$. Similarly, the assumption in Corollary~\ref{C:two}, that $\smgp$ contains a pair
of elements $\vphi$ and $\psi$ such that both $\vphi$ and $\psi^{-1}$
are principal fully irreducible outer automorphisms, is also automatically satisfied in the case where either $\smgp=\out$ or $\smgp$ is a subgroup of finite index in $\out$.

\subsection{Key ingredients}
The proof of Theorem~\ref{T:main} involves several key ingredients. The first is the ``stability" result of Algom-Kfir, Kapovich, and Pfaff~\cite{stablestrata} about the axes of principal outer automorphisms in Outer space $\cv$. The second is a strengthened
contraction property for axes of fully irreducibles in $\cv$, which we call the bounded geodesic image property; see Theorem~\ref{t:BGIP}. This is stronger than previous contraction properties that were established in \cite{a08} and later in \cite{bf11,dt17}.
The third ingredient is the \emph{lone axis} property for principal outer automorphisms. Lone axis fully irreducibles $\vphi\in \out$ are those fully irreducibles for which the entire ``axis bundle" (introduced by Handel and Mosher in \cite{hm11}) consists of a single line, and consequently, which have a unique axis in $\cv$. Lone axis fully irreducibles were introduced and studied by Mosher and Pfaff in \cite{loneaxes}, where also an explicit characterization (in terms of the index theory) of the lone axis property was obtained. The results of \cite{loneaxes}, as well of ~\cite{ap16,stablestrata}, demonstrate the existence of deep connections between the index properties of fully irreducibles, particularly the structure of their ideal Whitehead graphs, and the geometric properties of their axes in $\cv$. In the present  paper we use the lone axis property of principal outer automorphisms to ``coarsify'' the infinitesimal stability results from \cite{stablestrata}, see Corollary~\ref{cor:close_enough} below.

\subsection{Further questions}


Theorem~\ref{T:main} naturally raises the questions of whether the properties of being principal or lone axis are typical, in the random walk sense, for the elements of $\out$. We believe that the answer to both of these questions is probably negative, but that principal outer automorphisms do occur with asymptotically positive probability. Note that, as a consequence of the results of \cite{loneaxes}, for a triangular automorphism $\vphi$ it follows that $\vphi$ is lone axis if and only if $\vphi$ is principal, i.e. if and only if $i(\vphi)=\frac{3}{2}-r$. Computer experiments, that we conducted using Thierry Coulbois' SAGE train track package\footnote{The Coulbois computer package is available at \cite{c12}.}, appear to indicate that several possibilities for $i(\vphi)$ (and not just $i(\vphi)=\frac{3}{2}-r$) happen with asymptotically positive probability. Moreover, specific examples exhibited in \cite{stablestrata} demonstrate a specific reason why a loss of several components of $\iwp$ may occur, resulting in the loss of the lone axis property. Roughly, a periodic vertex in a train track representative of $\vphi$, which contributes to $\iwp$, may become nonperiodic for a ``nearby" outer automorphism $\psi$ and thus no longer contribute to $IW(\psi)$. Heuristic considerations suggest that this behavior will have detectable probabilistic effect.

However, this assumption in Theorem~\ref{T:main} cannot, in general, be removed. In ~\cite{kp15} Kapovich and Pfaff constructed a ``train track directed'' random walk on $\out$. While that walk is defined by an irreducible finite state Markov chain, rather than by a sequence of i.i.d. random variables distributed according to $\mu$, qualitatively the walk behaves similarly to the ones considered in the present paper. However, with asymptotically positive probability, for the outer automorphisms $\vphi_n$ produced by that walk after $n$ steps, the ideal Whitehead graph $IW(\vphi_n)$  is the complete graph on $2r-1$ vertices.  Thus $\vphi_n$ is neither principal nor triangular in that case. The walk considered in \cite{kp15} is constructed in such a way that it never encounters principal outer automorphisms.

\subsection*{Acknowledgements}

We are grateful to Lee Mosher and Chris Leininger for useful conversations.
We also thank Yael Algom-Kfir, who contributed to early discussions of the problems considered in this paper.

\vskip20pt

\section{Metric spaces, random walks, and oriented matches}

\subsection{Metric space notation}\label{ss:msnotation}

In this section we collect together some basic definitions about (possibly asymmetric) metric spaces.  The main two examples we shall consider are Outer space with the Lipschitz metric $(\os, \dL)$, an asymmetric geodesic metric space which is locally compact, and the complex of free factors $(\FF, d_\FF)$, a symmetric Gromov hyperbolic space which is not locally compact.  See \S \ref{ss:os} and \S \ref{sec:ff} for further details.

An (asymmetric) metric space is a pair $(X, d_X)$, where $X$ is a topological space, and $d_X$ is a function $d_X \colon X \times X \to \R_{\ge 0}$ which satisfies $d_X(x, x) = 0$ for all points $x \in X$, and $d_X(x, y) \le d_X(x,z) +d_X(z,y)$, for all points $x, y$, $z \in X$.  We say that $(X, d_X)$ is \emph{symmetric} if $d_X(x, y) = d_X(y, x)$ for all $x,y\in X$.

We say that the (possibly asymmetric) metric space $(X, d_X)$ is \emph{geodesic} if for each pair of points $x,y\in X$ there is a subinterval $I = [0, d_X(x, y)] \subset \R$ and a path $\gamma \colon I \to X$ such that $d_X(x, \gamma(t) ) = t$.  Let $I$ be a connected subset of $\R$, or a set of consecutive integers in $\Z$, and let $Q$ be a non-negative constant.  We say that a map $\gamma \colon I \to (X, d_X)$ is a \emph{$Q$-quasigeodesic} if for all $s < t$ in $I$,
\[ \frac{1}{Q} \norm{s - t} - Q \le d_X(\gamma(s), \gamma(t)) \le Q
\norm{s - t} + Q. \]

We say that $X$ is \emph{Gromov hyperbolic}, or \emph{$\delta$-hyperbolic}, if $(X, d_X)$ is a symmetric geodesic metric space, and there is a constant $\delta \ge 0$ such that for any geodesic triangle in $X$, each side is contained in a $\delta$-neighbourhood of the other two.

The \emph{Gromov product} of three points $x, y,z \in X$ is defined to be
\[ \gp{x}{y}{z} = \tfrac{1}{2}( d_X(x, y) + d_X(x, z) - d_X(y,
z)).  \]
If $X$ is $\delta$-hyperbolic, then $\gp{x}{y}{z}$ is equal to the distance from $x$ to a geodesic from $y$ to $z$, up to an error of at most $2 \delta$. Given two points $x$ and $y$ in $X$, and a number $R$, we define the \emph{shadow} $S_x(y, R)$ to be
\[ S_x(y, R) = \{ z \in X \mid \gp{x}{y}{z} \ge d_X(x, y) - R \}.  \]

The \emph{Gromov boundary $\partial X$} may be defined to be equivalence classes of quasigeodesic rays $\gamma \colon \R_{\ge 0} \to X$, where two quasigeodesic rays $\gamma$ and $\gamma'$ are equivalent if there exists a value $K$ such that $d(\gamma(t),\gamma'(t))<K$ for all $t\in\R_{\ge 0}$. Given $\delta \ge 0$, there is a constant $Q \ge 0$ such that any two distinct points in $\partial X$ are connected by a bi-infinite $Q$-quasigeodesic.

Let $g$ be an isometry of $X$.  We say that $g$ is \emph{loxodromic} if the orbit of $( g^n x)_{n \in \Z}$ of any point $x \in X$ is a quasigeodesic in $X$.  A loxodromic element $g$ has precisely two fixed points in the boundary, and we call any bi-infinite $Q$-quasigeodesic connecting them a \emph{quasi-axis} for $g$, though we may always assume the quasi-axis is $g$-invariant.

\subsection{Random walks background}

Let $G$ be a countable group, and let $\mu$ be a probability distribution on $G$.  We shall define the \emph{step space} to be the countable product $(G, \mu)^\Z$, and we shall write $g_n$ for projection onto the $n$-th factor.  The random variables $(g_n)_{n \in \Z}$ then correspond to a bi-infinite sequence of independent $\mu$-distributed random variables.  A sequence of steps $\omega \in (G, \mu)^\Z$ determines a random walk, whose location at time $n$ is given by the product of the first $n$ steps.  More precisely, the location of the random walk at time $0$ is $w_0 = 1$, and the location of the random walk at time $n$ is given by $w_{n+1} = w_n g_n$.  We shall write $\check \mu$ for the reflected measure, i.e. $\check \mu(g) = \mu(g^{-1})$, and we shall write $\mu^{*n}$ for the $n$-fold convolution of $\mu$ with itself.  In particular, $w_n$ is a random variable on $(G, \mu)^\Z$ with values in $G$, whose distribution is given by $\mu^{*n}$ if $n > 0$  and by $\check \mu^{*n}$ if $n < 0$.

For a group $G$ with a probability distribution $\mu$, the \emph{support} $\rm{supp}(\mu)$ of $\mu$ is the set of all elements $g\in G$ such that $\mu(g)>0$. We also denote by $\smgp$ the sub-semigroup of $G$ generated by $\rm{supp}(\mu)$. Note that for $g\in G$ we have $g\in \smgp$ if and only if there exists an $n\ge 1$ such that $\mu^{*n}(g)>0$.

Let $G$ act on a hyperbolic metric space $X$ by isometries.  We say two loxodromic isometries are \emph{independent} if their fixed point sets are disjoint in the Gromov boundary $\partial X$.  We say that the probability distribution $\mu$ is \emph{nonelementary} if $\smgp$ contains two independent loxodromic elements.

A choice of basepoint $x_0 \in X$ determines an orbit map from $G$ to $X$ given by $g \mapsto g x_0$.  A random walk $(w_n)_{n \in \Z}$ on $G$ then gives rise to a sequence $(w_n x_0)_{n \in \Z}$ in $X$.  If $\mu$ is nonelementary then almost every sequence $(w_n)_{n \in \Z}$ converges to a point in $\partial X$ in both the forward and backward directions.  Let $\gamma_\omega$ be a bi-infinite $Q$-quasigeodesic in $X$ connecting the two limit points.  We say that $\mu$ has \emph{bounded support in $X$} if the image of the support of $\mu$ in $X$ under the orbit map has bounded diameter.
If $\mu$ has bounded support in $X$, then there are constants $\ell > 0$, $K \ge 0$, and $c < 1$ such that the following estimates hold.  The random walk has positive drift \cite[Theorem 1.2]{MaherTiozzo}, i.e. for all $n \ge 0$,
\begin{equation}\label{eq:drift}
\PP( d_X(x_0, w_nx_0) \le \ell n ) \le Kc^n.
\end{equation}
Furthermore, there is exponential decay for the convolution measures of shadows \cite[Lemma 2.10]{maher12}, i.e. for all $n$, for all $y \in X$, and for all $R \ge 0$,
\begin{equation}\label{eq:shadows}
\mu^{*n}( S_{x_0}(y, R) ) \le K c^{d_X(x_0 , y) - R}.
\end{equation}
The following two estimates follow from exponential decay for shadows.  For all $n \in \Z$ and for all $r \ge 0$, we have the following estimate for the probability of a location of the sample path $w_n x_0$ being far from $\gamma_\omega$ \cite[Proposition 5.7]{MaherTiozzo}:
\begin{equation}\label{eq:dist}
\PP( d_X( w_n x_0 , \gamma_\omega) \ge r ) \le K c^r.
\end{equation}
Finally, for Gromov products we have the following estimate for all $0 \le i \le n$ and all $r \ge 0$:
\begin{equation}\label{eq:gp}
\PP( \gp{w_i x_0}{x_0}{w_n x_0} \ge r ) \le K c^r.
\end{equation}
A weaker form of this estimate is stated in \cite[Proposition 5.9]{MaherTiozzo}, but the proof essentially shows the version above.  We give the argument below for the convenience of the reader.

\begin{proof}[Proof of \eqref{eq:gp}]
If $\gp{w_i x_0}{x_0}{w_n x_0} \ge r$, then $x$ lies in a shadow $S_{w_i x_0}(w_n x_0, R)$, with $d(w_i x_0, w_n x_0) - R \ge r + O(\delta)$.  The random variables $w_i$ and $w_i^{-1} w_n$ are independent, so by exponential decay of shadows \eqref{eq:shadows}, this occurs with probability at most $K c^{r + O(\delta)}$.
\end{proof}

We remark that \emph{a priori} the constants $K$ and $c$ in \eqref{eq:drift}--\eqref{eq:gp} above may all be different, but without loss of generality, we may choose $K$ and $c$ to be the largest of any of these constants, and then \eqref{eq:drift}--\eqref{eq:gp} hold for the same $K$ and $c$.

\subsection{Oriented matches}


Given $\kappa >0$, a quasigeodesic segment $\gamma \colon J \to X$, and a quasigeodesic $\gamma' \colon I \to X$, we follow [DH] and say that \emph{$\gamma'$ crosses $\gamma$ up to distance $\kappa$} if there exists an increasing map $\theta \colon J \to I$ such that $d_X(\gamma(t),\gamma'(\theta(t))) \le \kappa$ for all $t \in J$.

\begin{prop}\cite[Proposition 2.5]{DahmaniHorbez}\label{prop:dh}
Given non-negative constants $\delta$ and $Q$, there is a constant $\kappa > 0$ such that if $G$ is a group acting by isometries on a $\delta$-hyperbolic space $X$, and if $\mu$ is a nonelementary probability measure on $G$ with bounded support in $X$, then for almost every sample path $\omega \in (G , \mu)^\Z$ of the random walk, and all $0<\veps<1$, there exists an integer $N\ge 0$ such that for all integers $n\ge N$, the element $w_n$ acts loxodromically on $X$, and for all geodesic segments $\gamma_n$ from $x_0$ to $w_n x_0$, every $Q$--quasiaxis $\alpha_{w_n}$ for $w_n$ crosses a subsegment of $\gamma_n$ of length at least $(1- \veps)d_X(x_0,w_nx_0)$ up to distance $\kappa$.
\end{prop}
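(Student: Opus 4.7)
The plan is to combine the drift estimate \eqref{eq:drift}, the Gromov product decay \eqref{eq:gp}, and a standard thin-quadrilateral / Morse-lemma argument in $\delta$-hyperbolic spaces. Specifically, the goal is to show that, almost surely, for all sufficiently large $n$: (a) $d_X(x_0, w_n x_0) \ge \ell n$; (b) every intermediate orbit point $w_i x_0$ with $0 \le i \le n$ lies within $O(\log n)$ of any geodesic $\gamma_n$ from $x_0$ to $w_n x_0$; and (c) $w_n$ acts loxodromically on $X$, with any $Q$-quasi-axis $\alpha_{w_n}$ passing within $O(\log n)$ of both $x_0$ and $w_n x_0$. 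Once (a), (b), and (c) hold, a thin-quadrilateral argument delivers the required fellow-traveling within a constant $\kappa = \kappa(\delta, Q)$ after trimming an $O(\log n)$ initial and terminal portion of $\gamma_n$; since $O(\log n) \le \varepsilon \ell n / 2$ for $n$ large, the trimmed subsegment has length at least $(1 - \varepsilon)\, d_X(x_0, w_n x_0)$.

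Properties (a) and (b) are immediate via Borel--Cantelli. Estimate \eqref{eq:drift} gives exponentially-small-in-$n$ failure probability for (a), which is summable. For (b), \eqref{eq:gp} gives $\PP(\gp{w_i x_0}{x_0}{w_n x_0} \ge r) \le Kc^r$; taking a union bound over $0 \le i \le n$ and setting $r = C \log n$ with $C$ sufficiently large makes the total failure probability summable, so almost surely $\gp{w_i x_0}{x_0}{w_n x_0} \le C \log n$ for all large $n$ and all $0 \le i \le n$. Since in a $\delta$-hyperbolic space the distance from $w_i x_0$ to $\gamma_n$ equals this Gromov product up to an additive $2\delta$, property (b) follows.

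For property (c), the standard criterion in $\delta$-hyperbolic geometry identifies the distance from $x_0$ to $\alpha_{w_n}$ with $\tfrac12(d_X(x_0, w_n x_0) - \tau(w_n))$ up to a constant depending only on $\delta$ and $Q$, where $\tau(w_n)$ denotes the stable translation length. Equivalently, it suffices to bound the Gromov product $\gp{w_n x_0}{x_0}{w_n^2 x_0} = \gp{x_0}{w_n^{-1} x_0}{w_n x_0}$ by $O(\log n)$; this expresses the fact that $w_n$ does not fold back after the first iteration. To control this quantity, one combines \eqref{eq:gp} applied to the forward $\mu$-walk with the analogous estimate for the reflected $\check\mu$-walk (which satisfies the same hypotheses as $\mu$), using that $w_n^{-1} x_0$ has the distribution of the $\check\mu$-walk at time $n$ and that the forward and reflected limit points in $\partial X$ are almost surely distinct. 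A decomposition $w_n = ab$ into two independent halves, analogous to the argument used to establish \eqref{eq:gp} itself, then yields the desired $O(\log n)$ upper bound almost surely; combined with (a) and $w_n$-equivariance of $\alpha_{w_n}$, this gives (c).

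The principal obstacle is step (c): extracting a quantitative bound on $d_X(x_0, \alpha_{w_n})$ from the random walk estimates. This requires separating the forward and reflected trajectories at $x_0$ and exploiting independence of disjoint blocks of increments, and it is the point where nonelementarity is used in a nontrivial way, ensuring that the forward and reflected limit points are distinct. The remainder is soft hyperbolic geometry: the orientation-preserving reparametrization $\theta$ in the definition of ``crosses up to distance $\kappa$'' arises naturally from the fact that both $\gamma_n$ and $\alpha_{w_n}$ travel from near $x_0$ to near $w_n x_0$ with the same orientation, and the bulk of $\gamma_n$ lies within $\kappa = \kappa(\delta, Q)$ of $\alpha_{w_n}$ by the thin-quadrilateral / Morse lemma.
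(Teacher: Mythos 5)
The paper does not reprove Proposition~\ref{prop:dh}; it cites \cite[Proposition 2.5]{DahmaniHorbez} verbatim and remarks only that the Dahmani--Horbez proof uses finite support solely through the drift estimate \eqref{eq:drift} and the shadow-decay estimate \eqref{eq:shadows}, both of which hold under bounded support, so the statement carries over. Your outline is a genuine attempt at a self-contained argument, and its overall shape --- (a) linear drift plus (c) the quasi-axis of $w_n$ passing within $O(\log n)$ of $x_0$, combined with a Morse/thin-quadrilateral argument --- is the right one. Step (b), on intermediate orbit points tracking $\gamma_n$, is true but not actually used in the thin-quadrilateral step.

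The gap is in step (c), at the claim that $\gp{x_0}{w_n^{-1}x_0}{w_nx_0}$ can be controlled by combining \eqref{eq:gp} for the forward walk with ``the analogous estimate for the reflected $\check\mu$-walk,'' using that ``$w_n^{-1}x_0$ has the distribution of the $\check\mu$-walk at time $n$ and that the forward and reflected limit points in $\partial X$ are almost surely distinct.'' The marginal law of $w_n^{-1}$ is indeed $\check\mu^{*n}$, but $w_n^{-1}x_0$ is \emph{not} a point on the backward trajectory of the bi-infinite walk: the backward limit $\omega^-$ is the limit of $w_{-n}x_0$ (increments $g_{-1},g_{-2},\dots$), whereas $w_n^{-1}=g_{n-1}^{-1}\cdots g_0^{-1}$ reuses the forward increments. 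In particular $w_n^{-1}x_0$ does not converge to $\omega^-$, and the distinctness of $\omega^\pm$ is not what makes this Gromov product small. More to the point, $w_n$ and $w_n^{-1}$ are \emph{deterministically} dependent, so the conditioning trick underlying \eqref{eq:gp} --- split at an intermediate time and use independence of the two halves together with the shadow estimate --- does not apply verbatim to $\gp{x_0}{w_n^{-1}x_0}{w_nx_0}$: writing $w_n=ab$ with $a=w_m$, $b=w_m^{-1}w_n$, the middle ``angle'' at $x_0$ along the chain $w_n^{-1}x_0,\, a^{-1}x_0,\, x_0,\, ax_0,\, w_nx_0$ is exactly $\gp{x_0}{w_m^{-1}x_0}{w_mx_0}$, the same quantity at time $m$. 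Closing the loop therefore requires a recursion (or an ergodic-theoretic argument in the spirit of \cite{MaherSisto}), with careful control of the accumulated constants and of the $Q$-dependence in the fellow-travelling constant $\kappa$; none of this is ``analogous to the argument used to establish \eqref{eq:gp} itself.'' This is precisely the nontrivial content of the cited Dahmani--Horbez result, and as written your sketch does not supply it.
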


This result is stated for $\mu$ with finite support in \cite[Proposition 2.5]{DahmaniHorbez}, but their proof only uses finite support to give positive drift with exponential decay, and exponential decay for the measures of shadows, so their result holds for $\mu$ with bounded support in $X$.

Now suppose we have an isometric action $G \curvearrowright X$ and let $\gamma$ and $\gamma'$ be quasigeodesics in $X$. We say that $\gamma$ and $\gamma'$ have an $(L,\kappa)$--\emph{oriented match} if there is a subpath $s \subset \gamma$ of diameter at least $L$ and some $h\in G$ such that $h \cdot \gamma'$ crosses $s$ up to distance $\kappa$.


The following proposition is essentially given in \cite[Proposition 3.2]{MaherSisto}.  However, there it is assumed that the action is acylindrical and the authors were not concerned with the orientation of the match.

\begin{prop}\label{prop:MS}
Given non-negative constants $\delta$ and $Q$, there is a constant $\kappa_0$, such that if $G \curvearrowright X$ is a nonelementary action on a $\delta$-hyperbolic space, $\mu$ is a nonelementary measure on $G$ for this action, and $g \in G$ is a loxodromic in the semi-group generated by the support of $\mu$, then for all $L \ge 0$ and $\kappa \ge \kappa_0$, the probability that a $Q$--quasiaxis $\alpha_{w_n}$ for $w_n$ has an $(L, \kappa)$--oriented match with a $Q$--quasiaxis $\alpha_g$ of $g$ goes to $1$ as $n \to \infty$.
\end{prop}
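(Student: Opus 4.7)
The plan is to combine Proposition~\ref{prop:dh} with a Borel--Cantelli style argument that produces a long $g$-pattern deep inside the word $w_n$, and then track the geometry forced by such a pattern. Since $g \in \smgp$, there exists $k \ge 1$ with $p := \mu^{*k}(g) > 0$, so a block of $k$ consecutive i.i.d.\ $\mu$-steps spells out $g$ with probability $p$. Let $\tau>0$ be the translation length of $g$ on $X$ (positive because $g$ is loxodromic), and choose an integer $m$ large enough that $m\tau$ exceeds $L$ by a margin that will absorb all additive errors appearing below. Then a block of $mk$ consecutive steps spells out $g^m$ with probability $p^m>0$.

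First I would partition $\{1,\dots,n\}$ into $\lfloor n/mk\rfloor$ disjoint blocks of length $mk$, and for each block $B_j$ let $E_j$ be the event that the steps in $B_j$ spell out $g^m$. The $E_j$ are independent with $\PP(E_j)=p^m$, so for any fixed fraction (say the middle half of the blocks), the probability that at least one $E_j$ occurs tends to $1$ as $n\to\infty$. Combined with positive drift \eqref{eq:drift}, we obtain a (random) index $a$ and element $h := w_a \in G$ such that, with probability tending to $1$, the block $\{a{+}1,\dots,a{+}mk\}$ spells out $g^m$, both $d_X(x_0,hx_0)$ and $d_X(hg^m x_0, w_n x_0)$ are at least $\eta n$ for a fixed $\eta>0$, and $d_X(x_0,w_n x_0) \ge \ell n$. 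In particular $w_{a+mk} = h g^m$.

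The geometric consequences then come in two pieces. On the one hand, because $\mu$ has bounded support in $X$, the piece of the sample path from $hx_0$ to $hg^m x_0$ stays within a bounded distance of the orbit $hx_0,hgx_0,\dots,hg^m x_0$; the latter, being an orbit segment of a loxodromic isometry, lies within some constant $\kappa_1=\kappa_1(Q,\delta)$ of the $h$-translate $h\cdot\alpha_g$ of length at least $m\tau - O(1)$, with matching orientation. By $\delta$-hyperbolic thin-triangle arguments, any geodesic $\gamma_n$ from $x_0$ to $w_n x_0$ must fellow-travel $h\cdot\alpha_g$ along a sub-segment of length at least $m\tau - O(1)$ up to distance $\kappa_2=\kappa_2(Q,\delta,\kappa_1)$. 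On the other hand, apply Proposition~\ref{prop:dh} with $\veps$ chosen so small that $\veps\cdot d_X(x_0,w_nx_0) < \eta n/2$ along with \eqref{eq:drift}: with probability tending to $1$, any $Q$-quasi-axis $\alpha_{w_n}$ crosses a sub-segment of $\gamma_n$ of length at least $(1-\veps)d_X(x_0,w_nx_0)$ via an increasing parametrization, up to distance $\kappa_0$. By our positioning estimate this sub-segment contains the portion of $\gamma_n$ that fellow-travels $h\cdot\alpha_g$. Setting $\kappa := \kappa_0+\kappa_2$ and increasing $m$ if necessary so that the resulting overlap has length at least $L$, we obtain the desired $(L,\kappa)$-oriented match, with orientation preserved because both the crossing map of Proposition~\ref{prop:dh} and the forward parametrization of the orbit $hx_0,hgx_0,\dots,hg^mx_0$ along $h\cdot\alpha_g$ are increasing.

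The main technical obstacle is coordinating the two probabilistic ingredients: we need a $g^m$-block to appear sufficiently far from both endpoints \emph{and} Proposition~\ref{prop:dh} to cover that region with its crossing. Both are events of probability tending to $1$, but the bookkeeping with $\veps$, $\eta$, $n$, and the bounded-support diameter has to be tight enough that the matched sub-segment survives the losses at both ends. This is handled by first committing to $m$ and $\eta$ (controlled by the translation length of $g$ and the drift $\ell$), and only then selecting $\veps$ small enough relative to $\eta/\ell$ to invoke Proposition~\ref{prop:dh}; after that, the argument is a standard intersection-of-good-events estimate using \eqref{eq:drift} and independence of blocks.
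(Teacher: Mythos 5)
Your block-decomposition strategy --- spelling out $g^m$ directly inside the walk, in place of the paper's harmonic-measure and ergodicity argument --- is a legitimately different route, and the combinatorial bookkeeping (independence of blocks, drift to position the block in the middle) is fine. The gap is geometric: your claim that ``by $\delta$-hyperbolic thin-triangle arguments, any geodesic $\gamma_n$ from $x_0$ to $w_n x_0$ must fellow-travel $h\cdot\alpha_g$ along a sub-segment of length at least $m\tau - O(1)$'' does not follow from the estimates you cite. Knowing that the sample path passes through $h x_0$ and $hg^m x_0$ and stays near the orbit segment in between does not imply the geodesic $\gamma_n$ comes close to that segment: the walk can backtrack, and the conditions $d_X(x_0,hx_0)\ge\eta n$, $d_X(hg^m x_0,w_n x_0)\ge\eta n$, $d_X(x_0,w_nx_0)\ge\ell n$ are simultaneously consistent with $hx_0$ and $hg^m x_0$ lying at linear distance from $\gamma_n$ (for instance, the walk can travel out past the $g^m$-block and then double back to the far side of $x_0$). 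What you actually need is that the Gromov products $\gp{hx_0}{x_0}{w_nx_0}$ and $\gp{hg^m x_0}{x_0}{w_nx_0}$ are small compared with $m\tau$; establishing this uniformly over the (random) block index $a$ is precisely what the Claim inside the paper's proof does, via the Gromov-product decay estimate \eqref{eq:gp} and a union bound over $O(n)$ indices. You invoke \eqref{eq:drift} but never \eqref{eq:gp}, and the latter cannot be dispensed with.

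There is a second, related gap concerning orientation. The observation that ``both the crossing map of Proposition~\ref{prop:dh} and the forward parametrization of the orbit \dots are increasing'' does not establish that $\gamma_n$ traverses the $g^m$-segment from $hx_0$ toward $hg^m x_0$ rather than the reverse; those two monotonicities say nothing about the relative order in which the projections of $hx_0$ and $hg^m x_0$ occur along $\gamma_n$. To control that order you would further need that $\gp{x_0}{w_a^{-1}x_0}{g^m x_0}$ and $\gp{x_0}{g^{-m}x_0}{(w_{a+mk}^{-1}w_n)x_0}$ are small, i.e., that the four points $x_0, w_a x_0, w_{a+mk}x_0, w_n x_0$ are coarsely monotone along $\gamma_n$. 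This is exactly what the paper's use of shadows of $g^{-m}x_0$ and $g^m x_0$ (via \cite[Proposition~5.4]{MaherTiozzo} and \eqref{eq:shadows}, applied to both $\mu$ and $\rmu$) delivers: a quasigeodesic that enters $S_{x_0}(g^{-m}x_0,R_0)$ and exits $S_{x_0}(g^{m}x_0,R_0)$ crosses $\alpha_g$ with the correct orientation by construction. Your block argument is a reasonable substitute for the ergodicity step, but without these shadow/Gromov-product estimates the geometric conclusion does not follow.
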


Proposition \ref{prop:MS} follows from Proposition \ref{prop:dh} and \cite[Proposition 3.2.4]{MaherSisto}.  Propositions 3.2.1--3.2.5 in \cite{MaherSisto} are stated for acylindrical actions, but acylindricality is only used in Propositions 3.2.1 and 3.2.2.  Proposition \ref{prop:MS} follows from the proof of \cite[Proposition 3.2.4]{MaherSisto}, which is stated for a more general definition of matching which allows orientation reversing matchings, but in fact the proof shows this by showing that the probability of an oriented match tends to one.  We give the details for the convenience of the reader.

We shall start by showing that for any $0 < \veps < 1 $, the probability that $\gamma_n$ has a match of size $(1 - \veps) \norm{\gamma_n}$ with $\gamma_\omega$ tends to one as $n \to \infty$.

\begin{prop}\cite[Proposition 3.2.3]{MaherSisto}\label{prop:n w match} Given non-negative constants $\delta$ and $Q$, there is a constant $\kappa > 0$ such that if $G$ is a group acting by isometries on a $\delta$-hyperbolic space $X$, and $\mu$ is a nonelementary probability measure on $G$ with bounded support in $X$, then for almost every sample path $\omega \in (G , \mu)^\Z$ of the random walk, and all $0 < \veps < 1$, the probability that $\gamma_n$ has a subsegment of length $(1 - \veps) \norm{\gamma_n}$ contained in a $\kappa$--neighbourhood of $\gamma_\omega$ tends to one as $n\to\infty$.
\end{prop}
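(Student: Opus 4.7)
The plan is to show that, with probability tending to one, both endpoints of $\gamma_n$ lie within bounded distance of $\gamma_\omega$ and $\norm{\gamma_n}$ is large, so that a standard thin-quadrilateral argument in the $\delta$-hyperbolic space $X$ forces all but two short end-caps of $\gamma_n$ into a uniform $\kappa$-neighbourhood of $\gamma_\omega$, with $\kappa$ depending only on $\delta$ and $Q$.

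First I would establish that the random variable $D_n := d_X(w_n x_0, \gamma_\omega)$ has the same distribution for every $n \in \Z$ and is almost surely finite. Distributional invariance is a shift argument: the reparametrised walk with increments $w'_k := w_n^{-1} w_{n+k}$ is a bi-infinite random walk with the same distribution as $(w_k)$, and its associated quasigeodesic is $w_n^{-1} \gamma_\omega$, so
\[ D_n = d_X(w_n x_0, \gamma_\omega) = d_X(x_0, w_n^{-1} \gamma_\omega) \stackrel{d}{=} d_X(x_0, \gamma_\omega) = D_0. \]
Almost sure finiteness of $D_0$ follows from nonelementarity: both halves of the sample path converge almost surely to distinct boundary points $\xi_\pm \in \partial X$, and $\gamma_\omega$ is by definition a $Q$-quasigeodesic between them, so $x_0$ lies at bounded distance from $\gamma_\omega$ almost surely.

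Next, let $p_0$ and $p_n$ be nearest points of $\gamma_\omega$ to $x_0$ and $w_n x_0$ respectively, and consider the ``quadrilateral'' with sides $[x_0, p_0]$, $\gamma_\omega|_{[p_0, p_n]}$, $[p_n, w_n x_0]$ and $\gamma_n$. Standard $\delta$-thin polygon reasoning, combined with the uniform stability of $Q$-quasigeodesics in $\delta$-hyperbolic spaces, produces a constant $\kappa = \kappa(\delta, Q)$ such that every point of $\gamma_n$ is within distance $\kappa$ of one of the other three sides. Since the two ``short'' sides have lengths $D_0$ and $D_n$, a point of $\gamma_n$ outside the $\kappa$-neighbourhood of $\gamma_\omega$ must lie within distance $D_0 + \kappa$ of $x_0$ or within distance $D_n + \kappa$ of $w_n x_0$. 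Because $\gamma_n$ is a geodesic, its intersection with each such ball is an end-cap of length at most $D_0 + \kappa$ or $D_n + \kappa$, so the entire ``bad'' subset of $\gamma_n$ has total length at most $D_0 + D_n + 2\kappa$.

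Finally, fix $0 < \veps < 1$. By the positive drift estimate \eqref{eq:drift}, $\norm{\gamma_n} \ge \ell n$ with probability at least $1 - K c^n$. For $n$ large, the event $\{D_0 + D_n + 2\kappa \le \veps \norm{\gamma_n}\}$ is implied by $\{D_0 \le \veps \ell n / 4\} \cap \{D_n \le \veps \ell n / 4\} \cap \{\norm{\gamma_n} \ge \ell n\}$; since $D_0$ is almost surely finite and $D_n \stackrel{d}{=} D_0$, the complement of this event has probability at most $2 \PP(D_0 > \veps \ell n / 4) + K c^n$, which tends to $0$ as $n \to \infty$. On the good event, $\gamma_n$ minus its two end-caps is a subsegment of length at least $(1-\veps) \norm{\gamma_n}$ lying entirely in the $\kappa$-neighbourhood of $\gamma_\omega$, as required. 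The main subtlety is producing the universal constant $\kappa$ independent of $\veps$ (and of the possibly large values of $D_0, D_n$): this is exactly what the thin-polygon/quasigeodesic stability estimate delivers, with the size of $D_0, D_n$ absorbed into the $\veps$ term via positive drift rather than being inflated into $\kappa$.
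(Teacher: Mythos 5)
Your proof is correct and follows the same basic thin-quadrilateral strategy as the paper: with high probability both $x_0$ and $w_n x_0$ are close to $\gamma_\omega$ relative to $\norm{\gamma_n}$, and then quasigeodesic stability together with $\delta$-thinness forces all of $\gamma_n$ except two end-caps into a $\kappa(\delta, Q)$-neighbourhood of $\gamma_\omega$. The one genuine difference is in how you control $D_n = d_X(w_n x_0, \gamma_\omega)$. The paper invokes the exponential-decay estimate \eqref{eq:dist} (from Maher--Tiozzo) directly, which gives $\PP(D_n \ge \veps_1 \ell n) \le K c^{\veps_1 \ell n}$ and hence an explicit rate. You instead observe that the bi-infinite step space is shift-invariant, so $D_n \stackrel{d}{=} D_0$, and you only need a.s.\ finiteness of $D_0$; this is softer and avoids citing \eqref{eq:dist}, at the cost of yielding no convergence rate (which the statement does not require). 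Both are valid; the paper's version is more quantitative, yours is more elementary and self-contained modulo the fact that $D_0 < \infty$ a.s.\ (which you derive qualitatively from nonelementarity and convergence to distinct boundary points, and which could also simply be read off from \eqref{eq:dist} at $n = 0$). One small wrinkle worth flagging: the end-cap length bounds should properly be $D_0 + 2\delta$ and $D_n + 2\delta$ from the thin-quadrilateral step before the Morse constant $\kappa_1$ is folded in, but you overestimate by using $\kappa = \kappa_1 + 2\delta$ in both places, which only makes the estimate weaker and is harmless for the conclusion.
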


\begin{proof}
Let $\kappa_1$ be a Morse constant for $Q$--quasigeodesics in $X$, i.e. any geodesic connecting points of a $Q$-quasigeodesic is contained in a $\kappa_1$--neighbourhood of the $Q$-quasigeodesic.

As $\mu$ has bounded support in $X$, there are constants $\ell > 0$, $K \ge 0$, and $c < 1$ such that the estimates for positive drift \eqref{eq:drift} and distance from $\gamma_\omega$ \eqref{eq:gp} imply that the probability that both
\begin{equation}\label{eq:dist to g_w} d_X(x_0, \gamma_\omega) \le \veps_1 \norm{\gamma_n} \text{ and } d_X(w_n x_0, \gamma_\omega) \le \veps_1 \norm{\gamma_n}
\end{equation}
is at least $1 - 2 K c^{\veps_1 \ell n } + Kc^n$.  Thus the probability that \eqref{eq:dist to g_w} holds tends to one as $n \to \infty$. If \eqref{eq:dist to g_w} holds, then by thin triangles, $\gamma_n$ has a subgeodesic of length at least $(1 - 2 \veps_1) \norm{\gamma_n}$ contained in a $(\kappa_1 + 2 \delta)$--neighbourhood of $\gamma_\omega$.
%
%
Therefore the result holds with $\veps = 2 \veps_1 $ and $\kappa = \kappa_1 + 2 \delta$.
\end{proof}

We may now complete the proof of Proposition \ref{prop:MS}.

\begin{proof}[Proof of Proposition \ref{prop:MS}] Let $\gamma_n$ be a geodesic from $x_0$ to $w_n x_0$, let $\alpha_{w_n}$ be a $Q$--quasiaxis for $w_n$, and let $\gamma_\omega$ be a bi-infinite $Q$--quasigeodesic determined by the forward and backward limit points of a bi-infinite random walk $\omega$ generated by $\mu$.  Combining Propositions \ref{prop:dh} and \ref{prop:n w match}, there is a constant $\kappa \ge 0$ such that for any $0 < \veps < 1$ the probability that $\gamma_n$ has a subsegment of length $(1 - \veps)\norm{\gamma_n}$ which is contained in a $\kappa$--neighbourhood of $\alpha_{w_n}$, and also in a $\kappa$--neighbourhood of $\gamma_\omega$, tends to one as $n\to\infty$.

Let $g$ be a loxodromic element which lies in the semi-group generated by the support of $\mu$, which we shall denote $\langle \text{supp}(\mu) \rangle_+$, and let $\alpha_g$ be a $Q$-quasiaxis for $g$.  We now show that the probability that a subsegment of $\gamma_\omega$ has a large match with $\alpha_g$ tends to one as the length of the subsegment tends to infinity.

We shall write $\nu$ for the harmonic measure on $\partial X$, and $\rnu$ for the reflected harmonic measure, i.e the harmonic measure arising from the random walk generated by the probability distribution $\rmu(g) = \mu(g^{-1})$.  By assumption, the group element $g$ lies in $\langle \text{supp}(\mu) \rangle_+$, and so the group element $g^{-1}$ lies in $\langle \text{supp}(\rmu) \rangle_+$.  By \cite[Proposition 5.4]{MaherTiozzo}, there is a constant $R_0$ such that $\nu( S_{x_0}( g x_0, R_0 ) ) > 0$ for all $g \in \langle \text{supp}(\mu) \rangle_+$, and so also $\rnu( S_{x_0}( g x_0, R_0 ) ) > 0$ for all $g \in \langle \text{supp}(\rmu) \rangle_+$.

Given $\delta$ and $Q$, there is a constant $\kappa_0 \ge 0$ so that for any constants $\kappa \ge \kappa_0$ and $L \ge 0$, there is an $m$ sufficiently large such that any oriented $Q$-quasigeodesic $\gamma$ from $S_{x_0}(g^{-m} x_0, R_0)$ to $S_{x_0}(g^{m} x_0, R_0)$ has a subsegment of length $L$ which $\kappa$-fellow travels with the $Q$-quasiaxis $\alpha_g$, and whose orientation agrees with the orientation of $\alpha_g$.  In particular, any such $\gamma$ has an $(L, \kappa)$-match with $\alpha_g$.

As $\nu(S_{x_0}(g^m x_0, R_0)) > 0$ and $\rnu(S_{x_0}(g^{-m} x_0, R_0)) > 0$, there is a positive probability $p$ so that $\gamma_\omega$ has an oriented $(L, \kappa)$-match with $\alpha_g$.  Ergodicity now implies that for any $\veps > 0$ the proportion of times $\veps n \le k \le (1 - \veps) n$ for which $\gamma_\omega$ has a subsegment of length at least $L$ which lies in a $K$-neighbourhood of $w_k \gamma_g$ tends to $p$ as $n\to\infty$, for almost all sample paths $\omega$.

The final step is to relate the locations $w_k x_0$ of the random walk to the geodesic $\gamma_n$, by showing that all of the $w_k x_0$ for $\veps n \le k \le (1 - \veps) n$ have nearest point projections to $\gamma_n$ that are far from the endpoints, with asymptotic probability one:

\begin{claim} For any $\veps > 0$, the probability that all $w_k x_0$ with $4 \veps n \le k \le (1 - 4 \veps ) n $ have nearest point projections to $\gamma_n$ which lie distance at least $\veps \norm{\gamma_n}$ from each endpoint tends to one as $n\to\infty$.
\end{claim}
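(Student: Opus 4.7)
The plan is to locate the nearest point projection $\pi_k$ of $w_k x_0$ onto $\gamma_n$ via the standard hyperbolic-space identity $\gp{x_0}{w_k x_0}{w_n x_0} = t_k \pm O(\delta)$, where $t_k := d_X(x_0, \pi_k)$, and then to rewrite this Gromov product so that the random walk inputs already in hand apply. Setting $D := \norm{\gamma_n}$, the elementary identities $\gp{x_0}{w_k x_0}{w_n x_0} + \gp{w_k x_0}{x_0}{w_n x_0} = d_X(x_0, w_k x_0)$ and $\gp{x_0}{w_k x_0}{w_n x_0} + \gp{w_n x_0}{x_0}{w_k x_0} = D$ yield
\[
t_k = d_X(x_0, w_k x_0) - \gp{w_k x_0}{x_0}{w_n x_0} \pm O(\delta), \qquad D - t_k = d_X(w_k x_0, w_n x_0) - \gp{w_k x_0}{x_0}{w_n x_0} \pm O(\delta).
\]
So the task splits into lower bounds on $d_X(x_0, w_k x_0)$ and $d_X(w_k x_0, w_n x_0)$, an upper bound on the Gromov product, and an upper bound on $D$.

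I would fix a small auxiliary parameter $\veps_0 > 0$ (to be chosen at the end in terms of $\veps$ and $\ell$) and work on the intersection of four events: (i) $D \le (1+\veps_0) \ell n$; (ii) $d_X(x_0, w_k x_0) \ge \ell k$ for every $k \in [4\veps n, (1-4\veps) n]$; (iii) $d_X(w_k x_0, w_n x_0) \ge \ell (n-k)$ for every such $k$; (iv) $\gp{w_k x_0}{x_0}{w_n x_0} \le \veps_0 n$ for every such $k$. Event (i) has probability tending to $1$ by Kingman's subadditive ergodic theorem applied to $F_n := d_X(x_0, w_n x_0)$, which is subadditive along shifts on $(G,\mu)^{\Z}$ and has $\mathbb{E}[F_1] < \infty$ by bounded support, so that $F_n/n \to \ell$ almost surely. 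Events (ii) and (iii) follow from \eqref{eq:drift}, applied to the length-$k$ sub-walk and to the independent length-$(n-k)$ sub-walk $w_k^{-1} w_n$ (whose law is $\mu^{*(n-k)}$); a union bound over the $O(n)$ values of $k$ costs only a polynomial factor against $K c^{4\veps n}$. Event (iv) follows from \eqref{eq:gp} and a similar union bound, with joint failure probability at most $n \cdot K c^{\veps_0 n}$.

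On the intersection of (i)--(iv), for each $k$ in the prescribed range we get $t_k \ge \ell k - \veps_0 n - O(\delta) \ge (4\veps \ell - \veps_0) n - O(\delta)$, while $\veps D \le (1+\veps_0) \veps \ell n$; choosing $\veps_0$ small enough (for instance $\veps_0 < 3 \veps \ell / (1 + \veps \ell)$) produces $t_k \ge \veps D$ for all sufficiently large $n$, and the symmetric computation based on (iii) yields $D - t_k \ge \veps D$.

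The hardest step will be securing the upper bound (i) on $D$. Without it, $\veps D$ could a priori be as large as $\veps M n$, where $M$ is the diameter of $\mathrm{supp}(\mu)$ in $X$, and when $M > 4\ell$ this already overwhelms the $(4\veps \ell - \veps_0) n$ lower bound on $t_k$ that the drift and Gromov product estimates give. The almost-sure sublinear tracking $D/n \to \ell$ is what rescues the argument; it is softer than the exponential decay in \eqref{eq:drift} and \eqref{eq:gp} (it contributes only a factor tending to $1$, rather than one of the form $1 - O(c^n)$), but since the claim asks only for asymptotic probability, this is enough.
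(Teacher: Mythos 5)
Your proof is correct, and its skeleton is the same as the paper's: locate the projection foot $t_k$ via the $\delta$-hyperbolicity identity $t_k \approx \gp{x_0}{w_k x_0}{w_n x_0}$, control $\gp{w_k x_0}{x_0}{w_n x_0}$ by the exponential-decay estimate \eqref{eq:gp}, control $d_X(x_0,w_k x_0)$ and $d_X(w_k x_0, w_n x_0)$ by positive drift \eqref{eq:drift}, and union-bound over the $O(n)$ values of $k$. Where you part ways with the paper is your event (i), the upper bound on $D = \norm{\gamma_n}$. You are right that this is genuinely needed: the paper asserts that positive drift alone gives $d_X(x_0,w_k x_0) \ge 2\veps\norm{\gamma_n}$ for all $k$ in the window with probability $\ge 1 - 2nKc^{4\veps n}$, but \eqref{eq:drift} only bounds $d_X(x_0,w_k x_0)$ from below, and $\norm{\gamma_n}$ is a random quantity for which \eqref{eq:drift} also only gives a lower bound. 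If $\mathrm{supp}(\mu)$ has orbit-diameter $M$ with $M$ much larger than $\ell$, the lower bound $d_X(x_0,w_k x_0) \ge \ell k \ge 4\veps\ell n$ is perfectly compatible with $\norm{\gamma_n}$ being close to $Mn$, which would violate the asserted inequality; one must rule this out, and your final paragraph identifies exactly this as the crux. Supplying the upper bound via the subadditive ergodic theorem is a clean and adequate fix for an asymptotic statement; if one wanted to keep the whole estimate exponential, an upper large-deviation bound (e.g.\ Azuma--Hoeffding applied to $\omega \mapsto d_X(x_0, w_n x_0)$, which is $M$-Lipschitz in each step) would serve in place of Kingman.

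One small repair: the a.s.\ Kingman limit $L := \lim_n d_X(x_0, w_n x_0)/n$ need not equal the constant $\ell$ from \eqref{eq:drift}; a priori one only has $\ell \le L$, so event (i) should read $D \le (1+\veps_0)Ln$. Running that through your closing inequality, you need $4\veps\ell - \veps_0 > \veps(1+\veps_0)L$, which as $\veps_0 \to 0$ requires $4\ell > L$. This is harmless because the Maher--Tiozzo estimate underlying \eqref{eq:drift} is valid for any $\ell$ strictly less than $L$, so simply fix $\ell \in (L/4, L)$ before beginning; the choice $\veps_0 < \veps(4\ell - L)/(1 + \veps L)$ then closes the argument.
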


\begin{proof} Let $\ell > 0, K \ge 0$, and $c < 1$ be the constants from the estimates in \eqref{eq:drift}--\eqref{eq:gp}.  Exponential decay for Gromov products \eqref{eq:gp} implies that the probability that $\gp{w_k x_0}{x_0}{w_n x_0} \le \lambda \log n$ for all $1 \le k \le n$ is at least $1 - n K c^{\lambda \log n}$.  For $\lambda$ sufficiently large, this $\to 1$, as $n\to\infty$.
Positive drift with exponential decay \eqref{eq:drift} implies that the probability that $d_X(x_0, w_k x_0) \ge 2 \veps \norm{\gamma_n}$ and $d_X(w_k x_0, w_n x_0) \ge 2 \veps \norm{\gamma_n}$ for all $4 \veps n \le k \le (1 - 4 \veps ) n $ is at least $1 - 2 n K c^{4 \veps n}$, which $\to 1$ as $n\to\infty$.

If the Gromov product $\gp{w_k}{x_0}{w_n x_0} \le \veps \norm{\gamma_n}$, and the nearest point projection of $w_k x_0$ to $\gamma_n$ is within distance $\veps \norm{\gamma_n}$ of $x_0$, then $d_X(x_0, w_k x_0) \le 2 \veps \norm{\gamma_n} + O(\delta)$, and similarly for the other endpoint $w_n x_0$.  Therefore, as the probability that the Gromov products $\gp{w_k x_0}{x_0}{w_n x_0} \le \lambda \log n \le \veps \norm{\gamma_n}$ for all $k$ tends to one, this implies that the probability that all of the nearest point projections of $w_k x_0$ to $\gamma_n$, for $4 \veps n \le k \le (1 - 4 \veps) n $, lie distance at least $\veps \norm{\gamma_n}$ from the endpoints, $\to 1$ as $n\to\infty$.
\end{proof}

This completes the proof of Proposition \ref{prop:MS}.
\end{proof}





\section{Outer space and the attracting tree for a fully irreducible}\label{ss:Trees}

\vskip 2pt

We assume throughout that $r\ge 3$ is an integer, that $F_r$ is the rank-$r$ free group, and that $\out$ is its outer automorphism group. Recall that $\vphi\in\out$ is \emph{fully irreducible} if no positive power of $\vphi$ fixes the conjugacy class of a nontrivial proper free factor of $F_r$. It is proved in \cite{bh92} that fully irreducible outer automorphisms have \emph{train track representatives}.

\vskip10pt

\subsection{Outer space $\cv$}\label{ss:os}

Culler--Vogtmann Outer space was first defined in \cite{cv86}. We refer the reader to \cite{FrancavigliaMartino,b15,v15} for background on Outer space and give only abbreviated discussion here. For $r\ge 2$ we denote the (volume-one normalized) Outer space for $F_r$ by $\cv$.  We think of points of $\cv$ in three different ways. First, points of $\cv$ are equivalence classes of volume-one marked metric graphs $h:R_r\to\G$ where $R_r$ is the $r$-rose, where $\G$ is a finite volume-one metric graph with betti number $b_1(\Gamma)=r$ and with all vertices of degree $\ge 3$, and where $h$ is a homotopy equivalence called a \emph{marking}. Second, a point of $\cv$ can be viewed as a minimal free and discrete isometric action of $F_r$ on an $\mathbb R$-tree $T$ with the quotient metric graph of volume one (where the tree $T$, in the previous picture, is obtained as the universal cover of the metric graph $\G$). In this paper, by an \emph{$F_r$-tree} we will mean an $\mathbb R$-tree $T$ endowed with a minimal nontrivial isometric action of $F_r$, or sometimes the projective class of such an action (when talking about points of compactified Outer space).
Third, a point $T\in \cv$ can be viewed as a length function $||.||_T:F_r\to \mathbb R_{\ge 0}$ where for $w\in F_r$ the number $||w||_T=\inf_{x\in T}d(x,wx)$ is the \emph{translation length} of $w$ in $T$. Each of these three descriptions, in the appropriate sense, uniquely defines a point of $\cv$. One can also think of $\cv$ as the union of the open simplices obtained by varying the lengths of edges of $\G$ for a given $h:R_r\to\G$. By abuse of notation, we will usually denote by $\G$ a point of $\cv$ given by $h:R_r\to\G$ and suppress the mention of the marking $h$.

Outer space $\cv$ admits a natural compactification $\uos=\cv\cup\partial \cv$. Points of $\uos$ are projective (homothety) classes of minimal \emph{very small} $F_r$-actions on $\mathbb R$-trees, \cite{bf94},\cite{cl95}. See \cite{Pa89} for descriptions of several equivalent topologies on $\uos$. The space $\cv$ naturally embeds in $\uos$ as an open dense $\out$-invariant subset, where a point of $\cv$ is identified with its projective class.

We consider $\cv$ with the asymmetric Lipschitz metric (see, for example, \cite{FrancavigliaMartino} for background information). We briefly recall the definition here for completeness. If $h_1:R_r\to\G_1$ and $h_2:R_r\to\G_2$ are two points of $\cv$, a continuous map $f:\Gamma_1\to\Gamma_2$ is called a \emph{difference of markings}  from $\G_1$ to $\G_2$ if $f$ is freely homotopic to $f_2\circ f_1^{-1}$.  The \emph{Lipschitz distance} $\dL(\G_1,\G_2)$ is defined as
\[
\dL(\G_1,\G_2):=\inf_f \log \mathrm{Lip}(f)
\]
where $\mathrm{Lip}(f)$ is the Lipschitz constant of $f$ and where the infimum is taken over differences in markings $f$ from $\G_1$ to $\G_2$.
It is known that $\dL$ is an asymmetric metric, in the sense of \S \ref{ss:msnotation}.

By a \emph{geodesic} in $\cv$ we always mean a directed geodesic with respect to $\dL$, i.e. a path $\gamma:J\to \cv$ (where $J\subseteq \mathbb R$ is a subinterval) such that for any $s<t$ in $J$ we have $\dL(\gamma(s),\gamma(t))=t-s$. It is known that $(\cv,\dL)$ is a directed geodesic space, i.e. for any $\G_1,\G_2\in\cv$, there exists a geodesic (in the above sense) from $\G_1$ to $\G_2$ in $\cv$.

It is sometimes convenient to symmetrize the metric $\dL$
to obtain a (nongeodesic) metric $\ds(x,y)=(\dL(x,y)+\dL(y,x))$. We remark that both metrics $\dL$ and $\ds$ define the standard topology on $\cv$ \cite{FrancavigliaMartino}, and, in particular, $(\cv,\ds)$ is locally compact.




\subsection{Train track maps}

We adopt the same conventions regarding train track maps and train track representatives of elements of $\out$ as in \cite{stablestrata}.
In particular, for a graph $\Gamma$ (which at this point we do not have to assume to be finite) and a vertex $x$ of $\Gamma$, the \emph{directions} at $x$ in $\Gamma$ are germs of initial segments of edges emanating from $x$. We refer the reader to~\cite{bh92,bogop08,bers,dkl15} for more detailed background on train track maps in the free group outer automorphism context.

\subsection{Attracting and repelling trees $T_+^{\vphi}$, $T_-^{\vphi}$ }\label{subs:Trees}

Let $\vphi \in \out$ be a fully irreducible outer automorphism. Then $\vphi$ acts on $\uos$ with ``North-South dynamics'' (see \cite{ll03}). As elements of $\partial\cv$, the attractor and repeller for the $\out$-action are $F_r$-trees, we denote them respectively by $T_+^{\vphi}$ and $T_-^{\vphi}$.

We recall from \cite{gjll} a concrete construction of $T_+^{\varphi}$.
Let $g\colon \Gamma \to \Gamma$ be a train track representative of $\varphi$ and $\tilde{\Gamma}$ the universal cover of $\Gamma$ equipped with a distance function $\tilde{d}$ lifted from $\Gamma$. The fundamental group, $F_r$, acts by deck transformations, hence isometries, on $\tilde{\Gamma}$. A lift $\tilde{g}$ of $g$ is associated to the unique automorphism $\Phi$ representing $\varphi$ satisfying that, for each $w \in F_r$ and $x \in \tilde{\Gamma}$, we have $\Phi(w)\tilde{g}(x)=\tilde{g}(wx)$. Define the pseudo-distance $d_{\infty}$ on $\tilde{\Gamma}$ by $\lim_{k \to +\infty} d_k$, where

$$d_k(x,y)=\frac{d(\tilde{g}^k(x),\tilde{g}^k(y))}{\lambda^k}$$

\noindent for each $x,y \in \tilde{\Gamma}$. Then $T_+$ is the $F_r$-tree defined by identifying each pairs of points $x,y \in \tilde{\Gamma}$ such that $d_{\infty}(x,y)=0$. It is known from \cite{bf94}, for example, that $T_+^{\vphi}$ is the $F_r$-equivariant Gromov limit of the trees $\widetilde{\Gamma_k}$ obtained from $\tilde{\Gamma}$ by identifying distance-0 points in $\tilde{\Gamma}$ endowed with the respective pseudo-distance $d_k$.

\begin{rk}[Left and right actions on $\cv$ and the direction of the folding lines]\label{r:RL}
The standard action of $\out$ on $\cv$ and on $\uos$ is a right action. At the level of length functions, this action is described as follows. For a tree $T$, an outer automorphism $\vphi\in\out$, and an element $w\in F_r$ we have $||w||_{T\vphi}:=||\vphi(w)||_T$. In the language of actions on trees, for an $F_r$-tree $T$ and an outer automorphism  $\vphi\in\out$, the $F_r$-tree $T\vphi$ is defined as the tree $T$ with the action of $F_r$ twisted via $\vphi$: For $x\in T$ and $w\in F_N$ we have $w\underset{T\vphi}{\cdot} x:=\vphi(w)\underset{T}{\cdot} x$.

If $\vphi\in \out$ is fully irreducible and $T_+^{\varphi}\in \uos$ is the attracting tree of $\vphi$, then, projectively, we have $\lim_{n\to\infty} T_0\vphi^n=T_+^{\varphi}$ in  $\uos$, for any $T_0\in \cv$.  In particular, if $\gamma$ is a periodic folding axis for $\vphi$ in $\cv$ (see \S \ref{ss:foldlines} below) with $\lim_{t\to\infty} \gamma(t)= T_+^{\varphi}$ (see \S \ref{ss:foldlines} below) then for every $S\in \gamma$ we have $\lim_{n\to\infty} S\vphi^n=T_+^{\varphi}$.

There is also a standard way to convert this right action of $\out$ on $\cv$ and on $\uos$ to a left action. Namely, for a tree $T$ and an element $\vphi\in\out$ set $\vphi\cdot T:=T\vphi^{-1}$.  We need to work with both these right and left actions.  Note that if $\vphi\in \out$ is fully irreducible with a periodic folding axis $\gamma$ in $\cv$ (see the discussion of axes below) then  for any $S\in \gamma$ we have
\[
\lim_{t\to\infty} \gamma(t)=\lim_{n\to\infty}\vphi^{-n}S=\lim_{n\to\infty}S\vphi^n=T_+^{\varphi}
\]
in $\uos$.
\end{rk}

\subsection{Ageometric fully irreducible outer automorphisms and PNPs}\label{ss:ageo}

\vskip2pt

As in \cite[Definition 0.1]{LevittPaulin}, we call an $\R$-tree \emph{geometric} if it arises as the dual to a measured foliation on some finite simplicial 2-complex. A fully irreducible $\vphi\in\out$ is \emph{ageometric} if $T_+^{\vphi}$ is not geometric. If $T_+^{\vphi}$ is geometric and $\vphi$ is induced by a surface homeomorphism, then $\vphi$ is called \emph{geometric}. One otherwise calls a fully irreducible outer automorphism $\vphi$ \emph{parageometric}, i.e. when $T_+^{\vphi}$ is geometric, but $\vphi$ is not induced by a surface homeomorphism.

While the definition of a \emph{(periodic) Nielsen path}, or \emph{PNP}, will not directly be of use to us, it will be relevant that each rotationless power of each ageometric fully irreducible outer automorphism has a PNP-free train track representative. In particular, by \cite[Theorem 3.2]{bf94}, for a fully irreducible $\vphi\in\out$, we have that $T_+^{\vphi}$ is geometric if and only if the \cite{bh92} ``stable'' train track representative of $\vphi$ contains a PNP. Using the ``stable'' PNP-free train track representative for an ageometric fully irreducible outer automorphism allows us, for example, to use the simplified definition of a principal vertex stated in \S \ref{ss:principal} and the simplified ideal Whitehead graph definition of \S \ref{ss:iwgrelationships}.

\vskip10pt

\section{Laminations and ideal Whitehead graphs}\label{s:IWGs}

\vskip2pt

\begin{conv}\label{conv:rot}
We continue to assume throughout this section that $r\ge 3$ is an integer, that $F_r$ is the rank-$r$ free group, and that $\out$ is its outer automorphism group. We add an assumption that $\vphi\in\out$ is an ageometric fully irreducible outer automorphism and that $g\from\G\to\G$ is a PNP-free irreducible train track representative of $\vphi$. We use $\tilde g\from \tilde\G \to \tilde\G$ to denote a lift of $g$ to the universal cover. Finally, for simplicity, we assume that $\vphi$, hence also all of its train track representatives, are \emph{rotationless}.

The theory of rotationless (outer) automorphisms can be found in \cite{fh11} or \cite{hm11}. We will use the facts that each positive power of this rotationless $g$ will itself be rotationless, and that anything that is fixed (in the sense of \S \ref{ss:principal}) by a positive power $g^k$ will in fact be fixed by $g$ itself. Thus, for $g$ there is no difference between ``periodic'' and ``fixed'' for vertices, directions, etc.
\end{conv}

We remark that the PNP-free and rotationless assumptions for $\vphi$ and $g$ are not restrictive since every fully irreducible outer automorphism has a positive rotationless power, and since (see \S \ref{ss:ageo}) every ageometric fully irreducible outer automorphism admits a PNP-free train track representative.

\vskip10pt

\subsection{Laminations}\label{ss:laminations}


\vskip10pt

The attracting lamination, $\Lambda_{\vphi}$, for a fully irreducible $\vphi\in\out$ is defined in \cite{bfh97}. The lamination records the limiting behavior of a cyclic word under repeated application of an automorphism, and hence has intimate connections to tiling and substitution theory, for example (see \cite{abhs06}, \cite{chl08I}). We will be most interested in its ``singularity structure,'' as described in \S \ref{ss:iwgdfs} and \S \ref{ss:iwgrelationships}.


Before proceeding further, we warn the reader of a bit of notational discrepancy in the literature. In most places $\Lambda_{\vphi}$ comes endowed with a ``$+$'' sign, but in \cite{hm11} $\Lambda_{\vphi}$ comes instead endowed with a ``$-$'' sign. Since we only use the attracting lamination, we leave the sign out altogether.

The attracting lamination can equally be realized as certain unordered pairs (\emph{leaves}) of distinct boundary points in $F_r$, in $T_+^{\vphi}$, or in the boundary of any point of $\cv$ whose quotient under the $F_r$ action carries a train track representative for $\vphi$ (points that are ``train tracks'' in the language of \cite{hm11} or \cite{loneaxes}).

For a definition of $\Lambda_{\vphi}$ in terms of a train track representative $g\from\G\to\G$, and most closely relevant to our context, we refer the reader to \cite[pg. 36]{hm11}. We give a brief description here.
The \emph{realization} $\Lambda(g)$ of $\Lambda_{\vphi}$ in $\G$ is the collection of all lines (\emph{leaves}) realized as bi-infinite immersed edge-paths $\alpha$ in $\G$
\[
\alpha=\dots,  e_{-n},\dots, e_{-1}, e_0, e_1,\dots e_n, \dots
\]
(where $e_i$ are edges of $\G$) with the property that for every finite subpath $\beta$ of $\alpha$ there exists an integer $k\ge 1$ and an edge $e$ of $\G$ such that $\beta$ is a subpath of $g^k(e)$.

The lifts of leaves of $\Lambda(g)$ to $\tilde \G$ are also referred to as lamination leaves. Every such lift connects a pair of distinct points in $\partial\tilde{\G}$ and the set of the resulting pairs forms a closed flip-invariant $F_r$-invariant subset of $\partial^2 \tilde{\G}=\partial\tilde{\G}\times \partial\tilde{\G} - \Delta$ which gives another way of viewing $\Lambda(g)$.  Via a natural identification $\partial \tilde{\G}\cong \partial F_r$ and, correspondingly, $\partial^2 \tilde{\G}\cong \partial^2 F_r$, this allows one to associate to $\Lambda(g)$ a subset  $\Lambda_{\vphi}\subseteq \partial^2 F_r$ that turns out to be independent of the choice of $g$ and to depend on $\vphi$ only.

On pg. 29-30 of \cite{hm11}, Handel and Mosher define a ``direct limit" map $g_\infty\from \tilde{\Gamma}\to T_+^{\vphi}$ (there called $f_g$) that is an isometry when restricted to each edge of $\tilde{\Gamma}$ (when $\tilde \G$ is equipped with the natural ``eigenmetric" determined by the transition matrix of $g$). The map $g_\infty$ is well-defined up to isometric conjugacy of the $F_r$ action on $T_+^{\vphi}$. By \cite[Lemma 2.21]{hm11} we know that the restriction of $g_\infty$ to each leaf is an isometry and that the collection in $T_+^{\vphi}$ of $g_\infty$-images of leaves is independent of the choice of train track representative $g$ of $\vphi$. This collection of leaf images will be the \emph{realization of $\Lambda_{\vphi}$ in $T_+^{\vphi}$}.


\vskip10pt

\subsection{Principal vertices and lifts}\label{ss:principal}

The notions of principal points and principal lifts were first introduced in \cite{fh11} to mimic the corresponding surface theory notions. We first establish certain relevant terminology.

Since both $T_+^{\vphi}$ and $\tilde \G$ are $\R$-trees, we freely use $\R$-tree terminology. For a general $\R$-tree $T$ we have the following. A point $p\in T$ is called a \emph{branch point} if $T\backslash p$ has $\ge 3$ components. Each of these components is called a \emph{direction} at $p$.  A \emph{turn} at $p$ is an unordered pair of directions $\{d_i, d_j\}$ at $p$.

Recall that $\vphi$ and $g$ are as in Convention \ref{conv:rot}.
A \emph{$g$-fixed point} (which, since we are in the absence of PNPs, is necessarily a vertex) in $\G$ is \emph{principal} if it has $\ge 3$ $g$-fixed directions. A point $\tilde{v}\in\tilde{\G}$ is called \emph{principal} if it is a lift of a principal point or, equivalently, if some  positive power of the lift $\tilde{g}$ fixes $\tilde{v}$ and $\ge 3$ directions at $\tilde{v}$. (See, for example, \cite[pg. 27-28]{hm11}).
A lift $\tilde{g} \colon \tilde{\G} \to \tilde{\G}$ is \emph{principal} when its boundary extension, denoted $\hat{g}$, has $\ge 3$ nonrepelling fixed points.


\vskip10pt

\subsection{Local/stable Whitehead graphs}\label{ss:iwgdfs}

In \cite{hm11}, Handel and Mosher define the ideal Whitehead graph for an arbitrary nongeometric fully irreducible $\vphi\in\out$. Here we will restrict our consideration to the case of ageometric fully irreducible outer automorphism. since that is the only case relevant to the present paper.  We recall that $\vphi$ and $g$ are as in Convention~\ref{conv:rot}.


We define the following graphs:

\begin{itemize}
\item For a vertex $v$ of $\G$, the \emph{local Whitehead graph} $LW(g,v)$ has a vertex for each direction at $v$ and an edge connecting the vertices corresponding to the pair of directions $\{d_1,d_2\}$ if the turn $\{d_1,d_2\}$ is taken by an image under some $g^k$ of some, hence any, edge of $\G$ (equivalently is taken by a leaf of $\Lambda(g)$).

\item The \emph{stable Whitehead graph} $SW(g,x)$ at a principal vertex
$x$ is then the induced subgraph of $LW(g,x)$ obtained by restricting to the  $g$-fixed directions.
\end{itemize}

For a principal vertex $\tilde x\in\tilde \G$, one similarly defines $\widetilde{SW}(\tilde x)$ to have a vertex for each direction at $\tilde x$ fixed by the principal lift $\tilde g \from \tilde\G\to\tilde\G$ fixing $\tilde x$ and an edge connecting the corresponding pair of vertices for each turn taken by a lamination leaf.  See \S 3 of \cite{hm11} for the detailed definition of $\widetilde{SW}(\tilde x)$.
  Note, however, that the graph $\widetilde{SW}(\tilde x)$ is naturally isomorphic to $SW(g,x)$, where $\tilde x$ is a lift of $x$.

\subsection{Ideal Whitehead graphs}\label{ss:iwgrelationships}

In \S 3 of \cite{hm11} Handel and Mosher define the ideal Whitehead graph $IW(\vphi)$ and give several equivalent descriptions of it. We use their description in the train track language (under our assumption of no PNPs) as the main definition.  See \cite[Lemma 3.1, Corollary 3.2]{hm11} for additional details. Note that if $\tilde g$ fixes two distinct vertices in $\tilde \G$ then the geodesic between them projects to a PNP in $\G$ \cite[Lemma 3.1]{hm11}, contradicting our no PNP assumption on $g$. Thus the no PNPs assumption on $g$ means that each principal lift $\tilde g$ of $g$ fixes a unique principal vertex $\tilde x$ in $\tilde \G$, which we call the \emph{center} of $\tilde g$. We will also sometimes say that $\tilde g$ is \emph{centered} at $\tilde x$.  In this case, we define\footnote{\cite{hm11} give a different definition of $W(\tilde g)$ which in our setting is equivalent to the one given here.}
\[
W(\tilde g):=\widetilde{SW}(\tilde x).
\]

Recall that two principal lifts $\tilde g$, $\tilde g'$ of $g$ are considered equivalent, $\tilde g\sim\tilde g'$, if there exists some $w\in F_r$ such that $\tilde g'=w\circ \tilde g\circ w^{-1}$ (for the left translation covering action of $F_r$ on $\tilde \G$). In this case (again, under the no PNPs assumption), there is a bijective correspondence between equivalence classes of principal lifts of $g$ and the set of principal vertices of $\G$, see \S 2 of \cite{hm11} for the details.

We now state the definition of the ideal Whitehead graph of $\vphi$, following the discussion on p. 42 in \S 3.1 of \cite{hm11}.

\begin{pd}[Ideal Whitehead graph]
We define the \emph{ideal Whitehead graph} as follows.
\begin{enumerate}

\item
Let $\vphi$ and $g$ be as in Convention \ref{conv:rot}.

We define the \emph{ideal Whitehead graph} $IW(\vphi)$ of $\vphi$ as the disjoint union of the $W(\tilde g)$ taken over the equivalence classes of all principal lifts $\tilde g$ of $g$.

In view of the discussion above and using \cite[Lemma 3.1, Corollary 3.2]{hm11}, under our assumption of no PNPs, we have
\[
IW(\vphi)=\sqcup  SW(g,x)
\]
where the union is taken over all principal vertices $x\in \G$.

\item Handel and Mosher (see also \cite{Thesis}) prove that this definition of $IW(\vphi)$ does not depend on the choice of $g$, is an invariant of the conjugacy class of $\vphi$ in $\out$, and is preserved under taking positive powers. 
    Recall that each ageometric fully irreducible $\vphi\in\out$ admits an expanding irreducible train track representative with no PNPs, and therefore the above definition applies to all rotationless ageometric fully irreducible outer automorphisms.

\item Let $\vphi\in \out$ be an arbitrary ageometric fully irreducible outer automorphism. Then there exists some integer $k\ge 1$ such that $\vphi^k$ is rotationless. Set $IW(\vphi):=IW(\vphi^k)$.
\end{enumerate}
\end{pd}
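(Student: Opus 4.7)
The plan is to unpack the proposition-definition into four well-definedness checks and handle them in turn: (a) the equivalence of the two formulas stated in part (1); (b) independence of the chosen train track representative; (c) conjugacy invariance and invariance under positive powers under the rotationless assumption; and (d) well-definedness of the extension $IW(\vphi) := IW(\vphi^k)$ in part (3).

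For (a) I would first invoke the center map. Under the no-PNP hypothesis, a principal lift $\tilde g$ of $g$ has a unique fixed vertex $\tilde x \in \tilde \G$: two distinct fixed vertices would be joined by a $\tilde g$-fixed geodesic projecting to a PNP of $g$, contradicting our assumption (this is the content of \cite[Lemma 3.1]{hm11}). I would then check that two principal lifts $\tilde g, \tilde g'$ are conjugate by some $w\in F_r$ (i.e., $\tilde g' = w \tilde g w^{-1}$) precisely when their centers lie in the same $F_r$-orbit, so that equivalence classes of principal lifts of $g$ are in bijection with principal vertices of $\G$. The identification $W(\tilde g) = \widetilde{SW}(\tilde x) \cong SW(g,x)$ is then verified by noting that the covering map carries $\tilde g$-fixed directions at $\tilde x$ bijectively to $g$-fixed directions at $x = p(\tilde x)$, and that lamination turns correspond because $\Lambda(g)$ is by definition the projection of its lifts.

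The hard part is (b): independence of the choice of $g$. My plan is to give an intrinsic description that sees only $\vphi$. Using the direct limit map $g_\infty\from\tilde \G \to T_+^{\vphi}$ from \cite[pp.~29--30]{hm11}, I would send each principal vertex $\tilde x$ to a branch point $g_\infty(\tilde x) \in T_+^{\vphi}$, and check that $g_\infty$ restricts to a bijection between the set of $\tilde g$-fixed directions at $\tilde x$ and the set of local directions of $T_+^{\vphi}$ at $g_\infty(\tilde x)$ (this is where the no-PNP and rotationless hypotheses are crucial: otherwise $g_\infty$ can identify initially distinct $\tilde g$-fixed directions or miss valence). By \cite[Lemma 2.21]{hm11}, $g_\infty$ carries leaves of $\Lambda(g)$ to the realization of $\Lambda_\vphi$ in $T_+^{\vphi}$ isometrically, so edges of $W(\tilde g)$ correspond exactly to pairs of $T_+^\vphi$-directions at $g_\infty(\tilde x)$ that are bridged by a leaf of $\Lambda_\vphi$. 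Since $(T_+^{\vphi},\Lambda_\vphi)$ is built from $\vphi$ alone and not from $g$, the resulting disjoint union of graphs over $F_r$-orbits of branch points of $T_+^\vphi$ is a $\vphi$-invariant model of $IW(\vphi)$, which gives the independence.

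Parts (c) and (d) are then routine consequences. For conjugacy invariance, if $\vphi'=\psi\vphi\psi^{-1}$, applying $\psi$ to a train track representative of $\vphi$ produces a train track representative of $\vphi'$, and the induced identifications on principal vertices, fixed directions, and leaves of the attracting laminations match the intrinsic model from (b). For power invariance: when $\vphi$ is rotationless the train track map $g$ and all its positive iterates $g^n$ share the same fixed vertices, fixed directions, and the same leaves (every finite subpath of a leaf appears in some $g^k$-image of an edge iff it does in some $g^{nk}$-image), so $SW(g,x)=SW(g^n,x)$ at every principal vertex. Finally, for (d), given any two positive integers $k,\ell$ with $\vphi^k$ and $\vphi^\ell$ rotationless, $\vphi^{k\ell}$ is rotationless as well, and applying power invariance to $\vphi^k$ and to $\vphi^\ell$ gives $IW(\vphi^k)=IW(\vphi^{k\ell})=IW(\vphi^\ell)$; the definition $IW(\vphi):=IW(\vphi^k)$ is therefore independent of the chosen rotationless power. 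The main obstacle throughout is step (b); once the intrinsic description via $(T_+^\vphi, \Lambda_\vphi)$ is in place, the remaining steps are essentially bookkeeping.
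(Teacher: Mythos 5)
Your proposal is correct and takes the same route the paper relies on: it unpacks the Handel--Mosher results cited in the Proposition-Definition, namely the center map and the bijection between equivalence classes of principal lifts and principal vertices for part (1), the intrinsic model via $g_\infty \colon \tilde\G \to T_+^{\vphi}$ and the realization of $\Lambda_\vphi$ in $T_+^{\vphi}$ (exactly the content of Proposition~\ref{p:br} and \cite[Lemma 2.21]{hm11}) for independence of the train track representative, and the rotationless fixed-point bookkeeping for power and conjugacy invariance and for the well-definedness in part (3). The paper itself does not reproduce these arguments but cites \cite[\S 3]{hm11} and \cite{Thesis} and restates the $T_+^{\vphi}$ model as Proposition~\ref{p:br}; your sketch is faithful to that approach.
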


\subsection{Ideal Whitehead graph and branch points in the attracting tree}

In \cite{hm11}, for a nongeometric fully irreducible $\vphi\in\out$,  Handel and Mosher also give an equivalent description of $IW(\vphi)$ in terms of directions at branch points in  $T_+^{\vphi}$ and the turns ``taken" by $\Lambda_{\vphi}$ there. We will not need the full strength of this description here and will only state the simplified versions of their results required for our purposes. Recall that $g_{\infty}$ is the map $f_g$ of \cite[pg. 29-30]{hm11}.

\begin{prop}\label{p:br}
Let $\vphi\in\out$ be a rotationless ageometric fully irreducible outer automorphism represented by an expanding irreducible train track map $g:\G\to\G$ with no PNPs.
Let $g_{\infty}:\tilde\G\to  T_+^{\vphi}$ be the $F_r$-equivariant ``direct limit'' map.
Then the following hold:

\begin{enumerate}
\item Let $\tilde g$ be a principal lift of $g$ centered at a principal vertex $\tilde x$ of $\tilde \G$. Then $b=g_{\infty}(\tilde x)$ is a branch point of $T_+^{\vphi}$ and the map $g_{\infty}$ induces a bijection between the set of $\tilde g$-fixed directions at $\tilde x$ in $\tilde \G$ (which are precisely the vertices of $\widetilde{SW}(\tilde x)$) and the set of directions (i.e. connected components of $T_+^{\vphi}-\{b\}$) of $T_+^{\vphi}$ at $b$.
\item Every branch point $b\in T_+^{\vphi}$ arises as in part (1).
\end{enumerate}

\end{prop}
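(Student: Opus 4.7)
The plan is to work directly from the construction of $g_\infty$ as the direct limit of the maps $\tilde g^k/\lambda^k$ on $\tilde\G$, and to translate properties of $\tilde g$-fixed directions at principal vertices into properties of directions at branch points of $T_+^{\vphi}$. Throughout, the no-PNP hypothesis is essential because it guarantees that each principal lift has a unique fixed vertex (its center) and that no two fixed vertices of $\tilde g$ get identified in $T_+^{\vphi}$; the rotationless hypothesis lets us conflate ``periodic'' and ``fixed.'' A key preliminary observation is that $g_\infty$ is an isometry on every edge of $\tilde\G$ (with the eigenmetric), and more generally is an isometry on any legal path, since legal paths have no cancellation when iterated by $\tilde g$.

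For part (1), fix a principal lift $\tilde g$ centered at $\tilde x$, and let $d_1,\dots,d_k$ denote the $\tilde g$-fixed directions at $\tilde x$. Each $d_i$ is the initial direction of a unique $\tilde g$-invariant legal ray $R_i$ emanating from $\tilde x$, obtained by iterating the initial edge under $\tilde g$. I will first show that the images $g_\infty(R_i)$ are pairwise disjoint rays in $T_+^{\vphi}$ emanating from $b=g_\infty(\tilde x)$, yielding distinct directions there: any hypothetical coincidence would produce two points of $\tilde\G$ with $d_\infty$-distance zero but positive $\tilde d$-distance, and iterating backwards using $\tilde g^{-1}$ at the legal level forces this to descend to a PNP, contradicting the hypothesis on $g$. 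Surjectivity then amounts to showing that every direction at $b$ in $T_+^{\vphi}$ is $g_\infty$-hit by some $R_i$. Given a direction at $b$, choose a geodesic segment in $T_+^{\vphi}$ emerging from it; pull back to a legal path in $\tilde\G$ issuing from a point in $g_\infty^{-1}(b)$. Using that $g_\infty^{-1}(b)$ consists either of $\tilde x$ alone (by no PNPs) or of points whose legal germs all factor through $\tilde x$, and that $\tilde g$ fixes $\tilde x$ and permutes the germs of edges there, one concludes that after finitely many iterates the pulled-back germ is $\tilde g$-fixed and hence is one of the $d_i$. Combined with the standard fact that the vertices of $\widetilde{SW}(\tilde x)$ are precisely the $\tilde g$-fixed directions at $\tilde x$, this gives the claimed bijection; since there are at least $3$ such directions (by principality of $\tilde x$), $b$ is a branch point of $T_+^{\vphi}$.

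For part (2), let $b\in T_+^{\vphi}$ be a branch point with at least three directions $D_1,D_2,D_3$. Pick small segments $\sigma_j$ of $T_+^{\vphi}$ entering $b$ from $D_j$. Lift each $\sigma_j$ under $g_\infty$ to a legal segment in $\tilde\G$; by the $F_r$-equivariance of $g_\infty$ and the fact that $g_\infty$ collapses only trees of points with $d_\infty=0$, we may arrange these lifts to share a common endpoint $\tilde y\in g_\infty^{-1}(b)$ giving three distinct germs of legal edges at $\tilde y$. Since $\tilde g$ stretches the eigenmetric by $\lambda$ on legal paths and scales $T_+^{\vphi}$ by $\lambda$ through the automorphism representing $\vphi$, the set $g_\infty^{-1}(b)$ is permuted by the collection of lifts of $g$ lying over the action on $T_+^{\vphi}$ that fix $b$. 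By the rotationless assumption and the fact that $T_+^{\vphi}$ has only finitely many $F_r$-orbits of branch points (using \cite{gl95}), some lift $\tilde g'$ in this collection fixes $\tilde y$ and, by the no-PNP hypothesis, $\tilde y$ is the unique fixed vertex of $\tilde g'$, making it the center. The three distinct germs at $\tilde y$ are permuted by $\tilde g'$ and, again by rotationlessness, are individually fixed. Hence $\tilde y$ is principal and $\tilde g'$ is a principal lift with $g_\infty(\tilde y)=b$, proving (2).

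The main obstacle I expect is the surjectivity step in part (1): one must carefully rule out the pathological scenario in which two distinct directions at $b\in T_+^{\vphi}$ pull back to germs in $\tilde\G$ that are separated by a nontrivial arc collapsed by $g_\infty$. Controlling this requires the PNP-free assumption in an essential way, since collapsed arcs in $\tilde\G$ correspond precisely to lifts of Nielsen paths of $g$; without this, distinct fixed directions at $\tilde x$ could bound a collapsed bigon in $T_+^{\vphi}$ and fail to inject into directions at $b$. Once this is handled, the rest of the argument is a careful bookkeeping of legal germs and their $\tilde g$-iterates.
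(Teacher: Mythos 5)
The paper's own proof of Proposition~\ref{p:br} is a two-line citation: part (1) is \cite[Lemma~3.4]{hm11} and part (2) is \cite[Lemma~2.16(1)]{hm11}. You instead attempt a direct proof from the construction of $g_\infty$, which is a genuinely different and more ambitious route. The broad outline you sketch does capture the right ideas (isometry of $g_\infty$ on legal paths, the role of Nielsen paths in governing identifications, rotationlessness to pass from periodic to fixed), but there are several steps that are asserted rather than established, and at least one that I do not think goes through as written.

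Concretely: in the surjectivity half of part (1), you pull back a direction at $b$ to a germ at some point of $g_\infty^{-1}(b)$ and then assert that ``after finitely many iterates the pulled-back germ is $\tilde g$-fixed.'' This is exactly the content that needs proof. Not every germ at $\tilde x$ is eventually fixed under $Dg$; what is true is that $Dg$ eventually maps every germ onto a periodic (hence fixed, by rotationlessness) germ, and you then need the extra fact that a germ and its $Dg$-image determine the same direction at $b$ in $T_+^{\vphi}$ once they lie in the same eventual orbit. That last fact is nontrivial and is the crux of the Handel--Mosher lemma; you never establish it. You also hedge on whether $g_\infty^{-1}(b)=\{\tilde x\}$ by adding the unexplained alternative ``points whose legal germs all factor through $\tilde x$.'' In part (2) there are two further unproved assertions: that the lifts of the segments $\sigma_j$ can be chosen to share a common endpoint $\tilde y\in g_\infty^{-1}(b)$ (this requires, at a minimum, that the relevant fiber of $g_\infty$ is a point or a connected subtree carrying all three incoming germs, which you do not argue); and that some lift $\tilde g'$ of $g$ over a suitable power of $\vphi$ fixes $\tilde y$. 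The latter does follow from the theory of principal lifts and their bijection with $F_r$-orbits of nonrepelling fixed points at infinity, but you only gesture at it via rotationlessness and finiteness of branch-point orbits. As a side remark, your injectivity argument in part (1) invokes the no-PNP hypothesis, but this step can be made cleaner: any two $\tilde g$-fixed directions at $\tilde x$ necessarily form a legal turn (an illegal turn eventually degenerates, contradicting that both directions are fixed and distinct), and $g_\infty$ is isometric on legal paths, so the rays already inject without appealing to PNPs; the PNP-free hypothesis is what is really needed for the surjectivity and for part (2).
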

\begin{proof}
Part (1) follows from \cite[Lemma~3.4]{hm11}. Part (2) follows from \cite[Lemma~2.16(1)]{hm11}.
\end{proof}

\subsection{The rotationless index and an alternate ageometric characterization}\label{ss:indices}

The notion of an index $\ind{(\varphi)}$ for an outer automorphism $\varphi \in Out(F_r)$ was first introduced in \cite{gjll}. This notion is not in general invariant under taking powers. \cite{hm11} introduces the notion of a rotationless index (there just called the index sum) $i(\varphi)$ for a fully irreducible $\varphi \in \Out(F_r)$. Let $\varphi$ be a nongeometric fully irreducible outer automorphism.
For each component $C_i$ of $IW(\vphi)$, let $k_i$ denote the number of vertices of $C_i$. Then the \emph{rotationless index} is defined as
$i(\vphi) := \sum 1-\frac{k_i}{2}$. It follows from~\cite[Lemma 3.4]{hm11} that for a rotationless nongeometric fully irreducible $\varphi \in \Out(F_r)$, the two notions differ only by a change of sign.

The rotationless index can be used to determine whether a fully irreducible outer automorphism is ageometric. In particular, a fully irreducible $\vphi\in\out$ is ageometric if any only if $0>i(\vphi)>1-r$. While the characterization follows fairly easily from a sequence of previous results, it is finally more or less stated in \cite{kp15} as Corollary 2.36. A justification and explanation of the fact can be found sprinkled throughout \cite{kp15} or in \cite[\S 2.9]{loneaxes}.

\vskip10pt

\section{Fold lines, axis bundles, and geodesics in $\cv$}\label{s:geodesics}
The proof of our main theorem requires us to understand the behavior of an axis of a random outer automorphism. As such, we need a detailed account of these axes.

\subsection{Fold lines and axis bundles}\label{ss:foldlines}


Let $\widehat\cv$ denote the ``unprojectivized'' version of $\cv$ and $p: \widehat\cv\to\cv$ the projection map. Then a
A \emph{fold line} in $\cv$ is the image under $p$ of a continuous, injective, proper function from $\R$ to $\hat\cv$ that is defined by a continuous 1-parameter family of marked graphs $t \to \Gamma_t$ and a family of homotopy equivalences $h_{ts} \colon \Gamma_s \to \Gamma_t$ defined for $s \leq t \in \R$, each marking-preserving, and satisfying:
~\\
\vspace{-\baselineskip}
\begin{itemize}
\item $h_{ts}$ is a local isometry on each edge for all $s \leq t \in \mathbb{R}$ and 
\item $h_{ut} \circ h_{ts} = h_{us}$ for all $s \leq t \leq u \in \mathbb{R}$ and $h_{ss} \colon \Gamma_s \to \Gamma_s$ is the identity for all $s \in \R$.
\end{itemize}

A fold line in Outer space $\mathbb{R} \to \os$ is said to be \emph{simple} if there exists a subdivision of $\mathbb R$ by points $(t_i)_{i\in \mathbb Z}$
\[
  \dots t_{i-1}< t_i <t_{i+1} \dots
\]
such that $\lim_{i\to\infty} {t_i}=\infty$, $\lim_{i\to-\infty} t_i=-\infty$, and the following holds:

For each $i\in \mathbb Z$ there exist distinct edges $e,e'$ in $\Gamma_{t_i}$, with a common initial vertex, such that: For each $s\in (t_i,t_{i+1}]$ the map $h_{s t_i}\colon \Gamma_{t_i}\to \Gamma_s$ identifies an initial segment of $e$ with an initial segment of $e'$, with no other identifications (that is, $h_{s t_i}$ is injective on the complement of those two initial segments in $\Gamma_{t_i}$).

\begin{rk}\label{r:sg} By \cite[Lemma 2.27]{stablestrata}, simple periodic fold lines in $\cv$, defined by  expanding irreducible train track representatives of outer automorphisms $\vphi\in\out$, are geodesics in $\cv$.
\end{rk}

In \cite{hm11}, Handel and Mosher define the axis bundle of a fully irreducible, in analogy with the Teichm\"uller axis for a pseudo-Anosov:


Suppose that $\vphi\in\out$ is a nongeometric fully irreducible outer automorphism.
Then the \emph{axis bundle} $\mathcal{A}_{\varphi}$ is the union of the images of all fold lines $\mL \colon \mathbb{R} \to \cv$ such that $\mL$(t) converges in $\uos$ to $T_{-}^{\varphi}$ as $t \to -\infty$ and to $T_{+}^{\varphi}$ as $t \to +\infty$.


We call the fold lines of the axis bundle \emph{axes}. An axis $\mL$ in $\cv$ for $\vphi$ is \emph{periodic} if there exists an $R>0$ such that for every $t\in \mathbb R$ we have $\mL(t+R)=\mL(t)\vphi$. Thus in this case
$\mL$ is $\vphi$-invariant and for every $x_0\in\gamma$ we have $\lim_{n\to\infty} x_0\vphi^n =T_{+}^{\varphi}$ and $\lim_{n\to\infty} x_0\vphi^{-n}=T_{-}^{\varphi}$ in $\uos$. (We recall the conventions regarding right and left actions of $\out$ explained in Remark~\ref{r:RL}). In more generally understanding ``periodic fold lines,''  of particular relevance here are the descriptions given in  \cite[Definitions 2.18-2.20]{stablestrata}, though these notions date back to Stallings \cite{s83} and Skora \cite{s89}.

In the circumstance where $\mathcal{A}_{\vphi}$ is formed from only a single fold line, we call this fold line a \emph{lone axis} and we say that $\vphi$ is a \emph{lone axis} fully irreducible outer automorphism. It is noteworthy that the lone axis will always be a periodic fold line for $\vphi$.

\begin{rk}[Fully irreducible outer automorphisms have simple axes]\label{r:simpleaxes} While not every periodic axis of a fully irreducible outer automorphism is simple, the axes constructed from Stallings fold decompositions, as explained in \cite[Definitions 2.18-2.20]{stablestrata} for example, are simple. In particular, every fully irreducible outer automorphism has at least one simple axis.
\end{rk}

Mosher--Pfaff \cite{loneaxes} gives a necessary and sufficient condition for an ageometric fully irreducible outer automorphism to have a lone axis.

\begin{thm}[\cite{loneaxes} Theorem 4.7] \label{lem:ue} The axis bundle of an ageometric fully irreducible outer automorphism $\varphi \in \out$ is a unique axis precisely if both of the following two conditions hold:
~\\
\vspace{-6mm}
\begin{enumerate}
\item the rotationless index satisfies $i(\varphi) = \frac{3}{2}-r$ and 
\item no component of the ideal Whitehead graph $IW(\varphi)$ has a cut vertex.
\end{enumerate}
\end{thm}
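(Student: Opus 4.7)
The plan is to leverage Handel–Mosher's description of the axis bundle $\mathcal{A}_\varphi$ as the union of all fold lines connecting $T_-^\varphi$ to $T_+^\varphi$ in $\uos$, together with the correspondence (Proposition~\ref{p:br}) between branch points of $T_+^\varphi$ and equivalence classes of principal lifts of a PNP-free rotationless train track representative $g : \Gamma \to \Gamma$ of $\varphi$. Fold lines in $\mathcal{A}_\varphi$ arise from Stallings fold decompositions of such representatives, so detecting lone-axis behavior reduces to detecting rigidity of these decompositions.

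For the direction that conditions (1) and (2) imply a lone axis, I would combine the index identity $i(\varphi) = \sum_i (1 - k_i/2)$ with the hypothesis $i(\varphi) = \tfrac{3}{2} - r$ and the absence of cut vertices to show that the structure of each principal vertex is completely pinned down: each $g$-fixed direction at each principal vertex participates in the same rigid pattern of lamination turns, and by Proposition~\ref{p:br} each branch point of $T_+^\varphi$ is forced to have exactly the valence and turn structure read off from $IW(\varphi)$. This rigidity means that the Stallings fold sequence of any train track representative of $\varphi$ admits no nontrivial reordering. An induction on the fold sequence then shows that any fold line in $\mathcal{A}_\varphi$ passing through any train-track point coincides with the canonical periodic folding axis of $\varphi$, hence $\mathcal{A}_\varphi$ is a single line (which is automatically periodic by Remark~\ref{r:simpleaxes}).

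For the converse I would argue by contrapositive and construct two distinct fold lines from the failure of either condition. If some component $C$ of $IW(\varphi)$ has a cut vertex $v$, then $v$ corresponds to a $g$-fixed direction at a principal vertex $x$ whose stable Whitehead graph $SW(g,x)$ disconnects upon removing $v$ into two ``sides'' of lamination turns; one can then perform the Stallings folds belonging to one side before those of the other (or vice versa), yielding two periodic fold lines that pass through genuinely distinct train-track points but share the same $\uos$-limits $T_\pm^\varphi$. Similarly, a strict inequality $i(\varphi) > \tfrac{3}{2} - r$ forces some $IW$-component to have $\ge 4$ vertices, providing analogous freedom in the fold ordering at the corresponding principal vertex. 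In either case the two resulting fold lines cannot be reparametrizations of one another, so $\mathcal{A}_\varphi$ is not a single axis.

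The main obstacle I expect is in the converse: verifying that a local reordering of Stallings folds actually produces a genuine fold line in $\mathcal{A}_\varphi$, meaning that the rearranged $1$-parameter family of marked graphs satisfies the local-isometry and marking-compatibility conditions defining a fold line, and that its $F_r$-equivariant direct limit still agrees with $T_+^\varphi$ via the map $g_\infty$ (rather than drifting outside the axis bundle). This reduces to showing that the obstruction coming from a cut vertex or from an oversized $IW$-component promotes to a combinatorially distinct but dynamically equivalent alternative folding of $\Gamma$; handling this cleanly will use the train-track stability theory invoked in Section~\ref{ss:Trees} together with the explicit branch-point correspondence of Proposition~\ref{p:br}.
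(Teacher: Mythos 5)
The paper does not actually prove this statement; it is imported verbatim as \cite{loneaxes} Theorem~4.7 (Mosher--Pfaff), so there is no internal proof to compare against. Your outline tracks the general Mosher--Pfaff philosophy of relating axis-uniqueness to rigidity read off from $IW(\varphi)$, but the converse direction as you lay it out has a concrete hole. You assert that $i(\varphi) > \tfrac{3}{2}-r$ forces some $IW$--component to have $\ge 4$ vertices and then construct two fold lines from that oversized component. This implication is false: since $i(\varphi) = \sum_i (1 - k_i/2)$ with each $k_i \ge 3$, one can have every $k_i = 3$ and still get $i(\varphi) > \tfrac{3}{2}-r$ simply by having strictly fewer than $2r-3$ components. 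The paper itself highlights exactly this family: a triangular $\varphi$ whose $IW(\varphi)$ is a disjoint union of fewer than $2r-3$ triangles is ageometric, has no cut vertices and no component with $\ge 4$ vertices, yet satisfies $i(\varphi) > \tfrac{3}{2}-r$ and is therefore (by this very theorem) not lone axis. Your two constructions never engage with this case, so the contrapositive does not establish the converse.

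There is also a softer gap in the forward direction. The axis bundle $\mathcal{A}_\varphi$ is the union of fold lines through all \emph{weak} train tracks in the Handel--Mosher sense, a strictly larger class than the train-track points at which a Stallings fold decomposition is available; an induction along a Stallings fold sequence therefore cannot by itself exhaust $\mathcal{A}_\varphi$. Closing either direction requires the Mosher--Pfaff analysis of the local (polyhedral) branching structure of the axis bundle near a weak train track, which is precisely where conditions (1) and (2) enter in tandem. You flag the converse-direction obstruction yourself in your final paragraph; as written, both directions are plausible plans rather than proofs, and the converse plan has the false step identified above.
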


One may note that, using the definition of \S \ref{ss:indices}, the first condition can be replaced by an ideal Whitehead graph condition.

\begin{rk}[Lone axes are simple greedy fold lines]\label{r:greedyfoldlines} It will be relevant throughout this paper that the lone axis of a lone axis fully irreducible outer automorphism is always a ``greedy fold line'' (also called a ``fast folding path" in \cite[Definition~5.7]{FrancavigliaMartino}).
One can find the full definition of a greedy fold path in \S $2$ of \cite{bf11} and on pg. 33 of \cite{DahmaniHorbez}. We omit the precise definition here but the idea is that at every time $t$ of the folding process one folds all the turns that can be folded then, at uniform speed. The lone axis of a lone axis fully irreducible outer automorphism is greedy because at every time $t$ there is a unique turn to fold. By \cite[Theorem~5.6]{FrancavigliaMartino}, greedy fold lines are geodesics in $\cv$, and hence lone axes of lone axis fully irreducible outer automorphisms are geodesics as well. This fact provides another explanation of these axes being geodesic, alternative to the use of \cite[Lemma 2.27]{stablestrata} mentioned in Remark~\ref{r:sg} above.
 The lone axis of a lone axis fully irreducible outer automorphism is also always a simple fold line.
 \end{rk}

The following lemma states that any geodesic which fellow travels an axis has the same forward and backward limits. It will be important later.

\begin{lem} \label{lem:closetoaxes}
Suppose that $\gamma$ is any geodesic axis of a nongeometric fully irreducible $\vphi \in \out$ and that a bi-infinite geodesic $\gamma'$ satisfies that, for some $\rho>0$, we have $\ds(\gamma(t), \gamma'(t))\le \rho$ for all $t\in \mathbb R$. Then $\gamma'(t)\to T^{\vphi}_+$ as $t\to +\infty$ and $\gamma'(t)\to T^{\vphi}_-$ as $t\to -\infty$.
\end{lem}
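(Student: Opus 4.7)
The plan is to prove the forward statement $\gamma'(t)\to T_+^{\vphi}$ as $t\to+\infty$; the case $t\to-\infty$ follows by the same argument applied to $\vphi^{-1}$, whose attractor in $\uos$ is $T_-^{\vphi}$. The strategy is to convert $\ds$-closeness of $\gamma$ and $\gamma'$ into bi-Lipschitz equivalence of length functions, and then use the north--south dynamics of $\vphi$ on $\uos$ (Levitt--Lustig) to rule out any subsequential limit other than $T_+^{\vphi}$.

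First, by compactness of $\uos$, any sequence $t_n\to+\infty$ admits a subsequence along which $\gamma'(t_n)\to T^*$ in $\uos$. Since $\gamma$ is an axis, $\gamma(t_n)\to T_+^{\vphi}$; it suffices to show $T^*=T_+^{\vphi}$ as projective classes. The hypothesis $\ds(\gamma(t),\gamma'(t))\le\rho$, combined with $\dL(x,y)=\log\sup_{[w]}\|w\|_y/\|w\|_x$, gives
\[
e^{-\rho}\ \le\ \frac{\|w\|_{\gamma'(t_n)}}{\|w\|_{\gamma(t_n)}}\ \le\ e^{\rho}
\]
for every nontrivial $w\in F_r$. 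A diagonal extraction over the countable set $F_r$ lets me assume $c_w:=\lim_n \|w\|_{\gamma'(t_n)}/\|w\|_{\gamma(t_n)}$ exists in $[e^{-\rho},e^{\rho}]$ for every $w$. Using $\|w\|_{T_+^{\vphi}}>0$ (freeness of the $F_r$-action on $T_+^{\vphi}$) and matching projective length-function limits produces a constant $\beta>0$ with
\[
\|w\|_{T^*}\ =\ \beta\, c_w\, \|w\|_{T_+^{\vphi}}.
\]

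The hard step will be to show that $c_w$ is independent of $w$, forcing $T^*=T_+^{\vphi}$ projectively. I argue by contradiction: if $T^*\ne T_+^{\vphi}$, then north--south dynamics of $\vphi$ yields $T^*\vphi^{-n}\to T_-^{\vphi}$ projectively as $n\to\infty$. Using $\|w\|_{T\vphi^{-n}}=\|\vphi^{-n}(w)\|_T$ together with the stretch identity $\|\vphi(w)\|_{T_+^{\vphi}}=\lambda\|w\|_{T_+^{\vphi}}$ (with $\lambda>1$ the expansion factor), the displayed equation becomes
\[
\|w\|_{T^*\vphi^{-n}}\ =\ \beta\, c_{\vphi^{-n}(w)}\, \lambda^{-n}\, \|w\|_{T_+^{\vphi}}.
\]
Taking the projective ratio at $w_1,w_2\in F_r$ and passing to the limit,
\[
\frac{c_{\vphi^{-n}(w_1)}}{c_{\vphi^{-n}(w_2)}}\ \longrightarrow\ \frac{r(w_1)}{r(w_2)},\quad r(w):=\frac{\|w\|_{T_-^{\vphi}}}{\|w\|_{T_+^{\vphi}}}.
\]
Since $c_\bullet\in[e^{-\rho},e^{\rho}]$, the left-hand side lies in $[e^{-2\rho},e^{2\rho}]$, so $r(w_1)/r(w_2)\in[e^{-2\rho},e^{2\rho}]$ for every pair $w_1,w_2\in F_r$. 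However, the analogous identity $\|\vphi^{-1}(w)\|_{T_-^{\vphi}}=\mu'\|w\|_{T_-^{\vphi}}$ (with $\mu'>1$ the expansion factor of $\vphi^{-1}$) gives $r(\vphi^n w)/r(w)=(\lambda\mu')^{-n}$, which is unbounded as $n$ varies over $\Z$ -- a contradiction.

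The main obstacle is the third paragraph, where bi-Lipschitz equivalence of length functions is upgraded to projective equality by playing the north--south dynamics of $\vphi$ against the multiplicative stretch behaviour on $T_\pm^{\vphi}$. The rest of the proof is standard compactness and length-function bookkeeping, together with care about the right- vs.\ left-action conventions when writing down $\|w\|_{T\vphi^{-n}}$.
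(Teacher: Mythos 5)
Your proof is correct, and it takes a genuinely different route from the paper's. Both arguments begin identically: compactness of $\uos$ gives a subsequential limit $T^*$ for $\gamma'(t_n)$, and the bound $\ds(\gamma(t),\gamma'(t))\le\rho$ yields the bi-Lipschitz comparison $e^{-\rho}\|w\|_{T_+^{\vphi}}\le\|w\|_{T^*}\le e^{\rho}\|w\|_{T_+^{\vphi}}$ (after normalization). At this point the paper invokes a nontrivial external theorem: a bi-Lipschitz comparison of length functions implies the trees have the same dual lamination $L^2(\cdot)$, and then Kapovich--Lustig \cite[Theorem~1.3(2)]{kl10} forces $T^*=T_+^{\vphi}$. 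You instead finish by hand: after a diagonal extraction you get a pointwise ratio function $c_w$ satisfying $\|w\|_{T^*}=\beta\,c_w\|w\|_{T_+^{\vphi}}$ with $c_w\in[e^{-\rho},e^{\rho}]$, and you play north--south dynamics of $\vphi$ on $\uos$ (Levitt--Lustig \cite{ll03}, already used in the paper) against the stretch identities $\|\vphi(w)\|_{T_+^{\vphi}}=\lambda\|w\|_{T_+^{\vphi}}$ and $\|\vphi^{-1}(w)\|_{T_-^{\vphi}}=\mu'\|w\|_{T_-^{\vphi}}$: if $T^*\ne T_+^{\vphi}$, then $T^*\vphi^{-n}\to T_-^{\vphi}$ projectively, which would force the ratio $r(w)=\|w\|_{T_-^{\vphi}}/\|w\|_{T_+^{\vphi}}$ to be uniformly bounded above and below across all of $F_r$, contradicting $r(\vphi^n w)/r(w)=(\lambda\mu')^{-n}$. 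Both approaches use that $T_\pm^{\vphi}$ carry free $F_r$-actions (so all translation lengths are positive), which holds since $\vphi$ is nongeometric fully irreducible. The trade-off: the paper's proof is shorter but imports the dual-lamination rigidity theorem of \cite{kl10}; yours is longer but self-contained modulo north--south dynamics, and makes transparent exactly how the bi-Lipschitz bound interacts with the expansion factors. Your remark that the $t\to-\infty$ case follows by symmetry (swap $T_+\leftrightarrow T_-$, $\vphi\leftrightarrow\vphi^{-1}$) is also fine.
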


\begin{proof}
Suppose that $\lim_{t \to\infty}\gamma'(t)\ne T^{\vphi}_+$ in $\uos$. Since $\uos$ is compact, there exists some $T'\in \partial \cv$, with $T'\ne  T^{\vphi}_+$, and a sequence $t_i\ge 0$, with $\lim_{i\to\infty} t_i=\infty$, such that $\lim_{i\to\infty}\gamma'(t_i)=T'$ in $\uos$. By the assumptions on $\gamma'$, for each $i\ge 1$, we have that $\ds(\gamma'(t_i), \gamma(t_i))\le \rho$. Since $\lim_{i\to\infty} t_i=\infty$, we have that $\lim_{i\to\infty} \gamma(t_i)=T^{\vphi}_+$. Because of the condition that $\ds(\gamma(t_i), \gamma'(t_i))\le \rho$, it follows that for the (unprojectivized) $\mathbb R$-trees $T^{\vphi}_+$ and $T'$ there exists some $c\ge 1$ such that for every $w\in F_r$
\[
\frac{1}{c}||w||_{T^{\vphi}_+}\le ||w||_{T'}\le c ||w||_{T^{\vphi}_+}.
\]
Hence $L^2(T^{\vphi}_+)=L^2(T')$ where $L^2(.)$ is the ``dual lamination" of a tree in $\uos$, see \cite{kl10} for the precise definition. Therefore, by \cite[Theorem 1.3(2)]{kl10}, we have $T'=T^{\vphi}_+$ in $\uos$. This contradicts the assumption that $T'\ne T^{\vphi}_+$. Thus $\lim_{t\to\infty}\gamma'(t)= T^{\vphi}_+$, as claimed. The argument for $\lim_{t\to -\infty}\gamma'(t)= T^{\vphi}_-$ is similar.
\end{proof}

\subsection{The space of geodesics}

\vskip10pt

\begin{df}[Fellow travelling]
Let $R\ge 0$ and $\rho\ge 0$. Let $\gamma: I\to \cv$ and $\gamma': J\to \cv$ be parameterized geodesics in $\cv$ (where $I,J\subseteq \mathbb R$ are some intervals).

(1) Let $t_0, t_0'\in \mathbb R$ be such that $[t_0,t_0+R]\subseteq I$, and $[t_0', t_0'+R]\subseteq J$, and for each $s\in [0,R]$,
\[
\ds(\gamma(t_0+s),\gamma'(t_0'+s))\le \rho.
\]
We then say that $\gamma|_{[t_0,t_0+R]}$ and $\gamma|_{[t_0',t_0'+R]}$ \emph{$\rho$-fellow travel}.

(2) We say that $\gamma$ and $\gamma'$  \emph{$\rho$-fellow travel for length $R$} if there exist $t_0,t_0'$ such that $\gamma|_{[t_0,t_0+R]}$ and $\gamma|_{[t_0',t_0'+R]}$ $\rho$-fellow travel.

(3) For a point $x=\gamma(t_1)$ on $\gamma$, we say that $\gamma$ and $\gamma'$  \emph{$\rho$-fellow travel for length $R$ at a segment centered at $x$} if there is some $t_0'\in \mathbb R$ such that   $\gamma|_{[t_1-\frac{R}{2},t_1+\frac{R}{2}]}$ and $\gamma'|_{[t_0',t_0'+R]}$ $\rho$-fellow travel.

\end{df}


\begin{df}[Space of geodesics $Fl_r$]\label{d:SpaceOfGeodesics}
We denote by $Fl_r$ the set of all parameterized bi-infinite geodesic fold lines $\mL:\mathbb R\to\cv$  in $\cv$. For $\mL\in Fl_r$, as in \cite{stablestrata}, we let $N(\mL,\varepsilon)$ denote the $\veps$-neighborhood of $\mL$ in $\os$ with respect to the symmetrized Lipschitz metric on $\os$.

\begin{enumerate}
\item[(a)] For $R>0, \veps>0$, and $\mL\in Fl_r$, we denote by $B(\mL,R,\veps) \subset Fl_r$ the set of all $\mL'\in Fl_r$ such that $\mL$ and $\mL'$ $\veps$-fellow travel for length $R$.\footnote{Note that this definition of $B(\mL,R,\veps)$ is slightly different from the definition of $B(\mL,R,\veps)$ given in \cite{stablestrata}, where the issue of directions on $\mL$ and $\mL'$ was ignored due to insufficiently careful phrasing. The definition of $B(\mL,R,\veps)$ is the one which is actually meant in \cite{stablestrata} and is needed in the present paper.}

\item[(b)] We topologize $Fl_r$ by using, for each $\mL\in Fl_r$, the family of sets $\{B(\mL,R,\veps)\}_{\veps>0, R\ge 1}$ as the basis of neighborhoods of $\mL$ in $Fl_r$. We call this topology $\tau$. And we write $\mL_i \to \mL$ to mean that a sequence of fold lines $\{\mL_i\}$ converges to $\mL$ in this topology.

\end{enumerate}

\end{df}

The following lemma is useful in terms of understanding the topology $\tau$ on $Fl_r$.

\begin{lem}\label{l:fell}
Let $\mL\in FL_r$,  where $\mL$ is contained in some thick part of $\cv$, and let $R\ge 1, \veps>0$ be such that $R> 4\veps$.  Let $\mL':J\to\cv$ be a geodesic where $J\subseteq \mathbb R$ is an interval of real numbers. Let $t_1<t_2$ and let $t_1', t_2'\in J$ be such that $t_2'-t_1'=R$, such that $\ds(\mL(t_1), \mL'(t_1'))\le \veps$, such that $\ds(\mL(t_2), \mL'(t_2'))\le \veps$, and such that $\mL'\left([t_1',t_2']\right)\subseteq  N(\mL,\veps)$.

Then $R-2\veps\le t_2-t_1\le R+2\veps$ and there exists a constanct $c>0$, depending only on the thickness of $\gamma$, and such that $\mL'|_{[t_1',t_2']}$ has a subsegment of length $R-c\veps$ which $c\veps$-fellow travels a subsegment of $\mL$ of length $c\veps$.


In particular, if $J=\mathbb R$ and $\mL'\in FL_r$, then $\mL'\in B(\mL,R-c\veps,c\veps)$.
\end{lem}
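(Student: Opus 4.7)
\textit{Plan.} The first assertion, that $|t_2 - t_1 - R| \le 2\veps$, follows directly from the asymmetric triangle inequality. Since $\mL$ and $\mL'$ are $\dL$-geodesics we have $\dL(\mL(t_1),\mL(t_2)) = t_2 - t_1$ and $\dL(\mL'(t_1'),\mL'(t_2')) = R$, and the hypotheses $\ds(\mL(t_i),\mL'(t_i')) \le \veps$ give $\dL \le \veps$ in both directions at each endpoint (since $\ds(x,y) = \dL(x,y)+\dL(y,x)$). Chaining these estimates through the quadrilateral with vertices $\mL(t_1),\mL'(t_1'),\mL'(t_2'),\mL(t_2)$ in the two possible orientations yields $t_2 - t_1 \le R + 2\veps$ and $R \le (t_2 - t_1) + 2\veps$.

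For the fellow-traveling conclusion I would construct a parameter-matching function $\sigma \colon [t_1',t_2'] \to \R$ with $\ds(\mL'(t'),\mL(\sigma(t'))) \le \veps$ for each $t'$, normalized so that $\sigma(t_1') = t_1$ and $\sigma(t_2') = t_2$; the hypothesis $\mL'([t_1',t_2']) \subseteq N(\mL,\veps)$ ensures such a pointwise choice exists. The goal is to show $\sigma$ can be taken monotone non-decreasing and to satisfy $|\sigma(b) - \sigma(a) - (b-a)| \le c\veps$ for all $a < b$ in $[t_1',t_2']$. The key use of the hypothesis that $\mL$ lies in an $\eta$-thick part of $\cv$ is that on this thick part (and on a small $\ds$-neighborhood of it, which contains $\mL'([t_1',t_2'])$) the reverse Lipschitz distance between nearby points is at most $c_0(\eta)$ times the forward distance. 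Hence in the quadrilateral $\mL(\sigma(a)),\mL'(a),\mL'(b),\mL(\sigma(b))$ \emph{both} orientations of the asymmetric triangle inequality yield usable bounds, forcing any backtracking of $\sigma$ to be $O(\veps)$. After modifying $\sigma$ pointwise among the available choices we obtain a monotone $\sigma$ obeying the claimed near-isometric estimate with a constant $c = c(\eta)$.

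With such a $\sigma$ in hand, trimming $[t_1',t_2']$ by $c\veps$ on each end produces a subinterval of length $R - 2c\veps$ on which $\mL'$ symmetrically $c\veps$-fellow travels the corresponding subsegment of $\mL$ through $\sigma$; after absorbing constants into $c$ this gives the claimed $(R-c\veps, c\veps)$-fellow travel, and the final assertion about $B(\mL, R - c\veps, c\veps)$ is then immediate from Definition~\ref{d:SpaceOfGeodesics}. I expect the main obstacle to be establishing the near-monotonicity of $\sigma$: in an asymmetric metric the naive closest-point projection to a geodesic can backtrack by large amounts, and the thickness hypothesis must be used to bound the asymmetry of $\dL$ in precisely the right way to rule this out without creating circular estimates.
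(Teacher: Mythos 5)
Your derivation of $R-2\veps\le t_2-t_1\le R+2\veps$ via the asymmetric triangle inequality is correct and matches the paper. But the fellow-traveling part has a genuine gap. You propose a pointwise parameter-matching function $\sigma\colon[t_1',t_2']\to\R$ with $\ds(\mL'(t'),\mL(\sigma(t')))\le\veps$, normalized at the endpoints, and want to force it to be near-monotone and near-linear using quadrilateral triangle inequalities together with thickness. These give only \emph{local} control: for instance, from $\dL(\mL(\sigma(b)),\mL(\sigma(a)))\le \veps+\dL(\mL'(b),\mL'(a))+\veps\le 2\veps+c_0(b-a)$ you can bound backtracking by $2\veps+c_0(b-a)$, which is $O(\veps)$ only when $b-a$ is. For $t'$ in the interior of $[t_1',t_2']$, nothing in these estimates forbids $\sigma(t')$ from escaping far outside $[t_1,t_2]$: the hypothesis $\mL'([t_1',t_2'])\subseteq N(\mL,\veps)$ only says each $\mL'(t')$ is $\veps$-close to \emph{some} point of the bi-infinite line $\mL$, not to a point near $t_1+(t'-t_1')$. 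Every lower bound you can extract from the quadrilateral grows like $t_1-2\veps-c_0(t'-t_1')$, useless in the middle of the interval; and since $\sigma$ need not be continuous, interpolation from the endpoint normalization does not help either.

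The paper's proof contains exactly the ingredient your sketch lacks. It defines the closed set $Z\subseteq[0,R]$ of parameters $s'$ for which \emph{some} $s\in[0,R]$ has $\ds(\mL'(t_1'+s'),\mL(t_1+s))\le\veps$, notes $0,R\in Z$, and analyses each complementary gap $(a',b')$ by which side of $[t_1,t_2]$ the nearby $\mL$-points at the parameters $a'$ and $b'$ lie on. Gaps whose two endpoints escape to the same side are confined to an $O(\veps)$-neighborhood of $0$ or of $R$ (Cases 1 and 2). The decisive Case 3, a gap whose endpoints escape to opposite sides of $[t_1,t_2]$, is excluded by a connectedness argument: covering $[a',b']$ by the two closed subsets of parameters escaping left and right forces some $q$ whose nearby $\mL$-points are simultaneously $\le t_1$ and $\ge t_2$, giving $R-2\veps\le t_2-t_1\le 2\veps$ and contradicting $R>4\veps$. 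That connectedness step, not pointwise thickness-controlled estimates, is what confines the matching parameter and yields $[\veps(3+2c_0),R-\veps(3+2c_0)]\subseteq Z$. Without an argument of this kind your proposal does not close, and you should make this step explicit.
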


\begin{proof}
Since $\mL$ is contained in some thick part of $\cv$, there exists a constant $c_0>0$ depending only on the thickness of $\mL$ (see \cite[Theorem 24]{ab12}) such that for any $s_1,s_2\in \mathbb R$ we have
\[\dL(\mL(s_1),\mL(s_2))\le \ds(\mL(s_1),\mL(s_2))\le c_0|s_1-s_2|.\]

Using the triangle inequality for $\dL$ and the fact that $\dL\le \ds$ we see that $R-2\veps\le t_2-t_1\le R+2\veps$.

Moreover, for the same reason, if $s,s'\in [0,R]$ are such that
\[
\ds(\mL'(t_1'+s'),\mL(t_1+s))\le\veps,\tag{$\ddag$}
\]
then $|s-s'|\le 2\veps$. Hence, by the choice of $c_0$, in the case of $(\ddag)$ we have
\[
\ds(\mL'(t_1'+s'),\mL(t_1+s'))\le 2\veps(1+c_0).\tag{$\ddag\ddag$}
\]

Let
$$Z=\{s'\in [0,R]\mid \exists ~s\in [0,R] \text{ with } \ds \mL'(t_1'+s'),\mL(t_1+s))\le\veps\}.$$
Thus $Z$ is a closed subset of $[0,R]$ containing $0$ and $R$. Set $\Omega=[0,R]\setminus Z$. Then $\Omega$ is an open subset of $[0,R]$, not containing $0$ or $R$, so that $\Omega$ must be a disjoint union of open intervals. Suppose that $J=(a',b')$, with $a'<b'$, is one of these intervals. Thus $a',b'\in Z$. Hence, by $(\ddag\ddag)$, $\ds(\mL'(t_1'+a'),\mL(t_1+a'))\le 2\veps(1+c_0)$ and $\ds(\mL'(t_1'+b'),\mL(t_1+b'))\le 2\veps(1+c_0)$.

Recall that, by our assumptions, $\mL'\left([t_1',t_2']\right)\subseteq  N(\mL,\veps)$. Thus, since $J\subseteq [0,R]\setminus Z$, either
\[
\ds (\mL'(t_1'+a'), \mL((-\infty,t_1]))\le\veps \text{ or } \ds (\mL'(t_1'+a'), \mL([t_2,\infty)))\le\veps
\]
 and, similarly, either
\[
\ds (\mL'(t_1'+b'), \mL((-\infty,t_1]))\le\veps \text{ or } \ds (\mL'(t_1'+b'), \mL([t_2,\infty)))\le\veps.
\]

We consider separately 3 cases.

\noindent {\bf Case 1.}  There exist $y_a,y_b<t_1$ such that $\ds(\mL(y_a), \mL'(t_1'+a'))\le \veps$ and $\ds(\mL(y_b), \mL'(t_1'+b'))\le \veps$.

Then $\ds(\mL(y_a), \mL(t_1+a')))\le \veps+ 2\veps(1+c_0)$ and $\ds(\mL(y_b), \mL(t_1+b')))\le \veps+ 2\veps(1+c_0)$.  Hence $|t_1+a'-y_a|\le \veps+ 2\veps(1+c_0)$ and $|t_2+b'-y_b|\le \veps+ 2\veps(1+c_0)$
which implies that $a',b'\le \veps+ 2\veps(1+c_0)$.

\noindent {\bf Case 2.} There exist $y_a, y_b>t_2$ such that $\ds(\mL(y_a), \mL'(t_1'+a'))\le \veps$ and $\ds(\mL(y_b), \mL'(t_1'+b'))\le \veps$.

Then by a similar argument as in Case 1, we see that $|R-a'|, |R-b'|\le \veps+ 2\veps(1+c_0)$.

\noindent {\bf Case 3.} There exist values $y_1\le t_1$ and $y_2\ge t_2$ such that either
$$\ds(\mL(y_1), \mL'(t_1'+a'))\le \veps \text{ and } \ds(\mL(y_2), \mL'(t_1'+b'))\le \veps,$$
or
$$\ds(\mL(y_1), \mL'(t_1'+b'))\le \veps \text{ and } \ds(\mL(y_2), \mL'(t_1'+a'))\le \veps.$$

Let
$$Q_1=\{ q\in [a',b'] \mid \exists ~y\le t_1 \text{ with }  \ds(\mL(y), \mL'(t_1'+q))\le \veps\}$$ and
$$Q_2=\{ q\in [a',b'] \mid \exists ~y\ge t_2 \text{ with }  \ds(\mL(y), \mL'(t_1'+q))\le \veps\}.$$
Thus $Q_1, Q_2$ are nonempty closed subsets of $[a',b']$ with $Q_1\cup Q_2=[a',b']$. Since $[a',b']$ is connected, it follows that there exists some $q\in Q_1\cap Q_2$. For this $q$ there exist $y_1\le t_1$ and $y_2\ge t_2$ such that $\ds(\mL(y_1), \mL'(t_1'+q))\le \veps$ and $ \ds(\mL(y_2), \mL'(t_1'+q))\le \veps$. Hence
\[
R-2\veps\le |t_2-t_1|\le |y_2-y_1|\le \ds(\mL(y_1),\mL(y_2))\le 2\veps,
\]
which contradicts our assumption that $R>4\veps$.  Thus Case 3 cannot occur.

We have proved that $[\veps(3+2c_0), R-\veps(3+2c_0)]\subseteq Z$. By $(\ddag\ddag)$ for every $s'\in  [\veps(3+2c_0), R-\veps(3+2c_0)]$ we have $\ds(\mL'(t_1'+s'),\mL(t_1+s'))\le 2\veps(1+c_0)$.

Thus, with $c=3+2c_0$, we see that $\mL'|_{[t_1',t_2']}$ has a segment of length $R-c\veps$ that $c\veps$-fellow travels a subsegment of length $R-c\veps$ of $\mL$, as required.
\end{proof}

We conclude this subsection with the following lemma, which is our main tool for showing that geodesics converge. The reader should note the very important hypothesis that $\varphi$ has a lone axis.

\begin{lem} \label{lem:getting_close}
Suppose $\vphi \in \out$ is fully irreducible with lone axis $\mL$, that $x$ is a point on $\mL$, that $\rho \ge0$, and that $D_i$ is a sequence of positive numbers with $D_i \to \infty$. Suppose further that $(\mL_i)_{i\ge 0}$ is a sequence of simple periodic fold lines determined by expanding irreducible train track maps and satisfying that $\mL_i$ $\rho$--fellow travels $\mL$  along a length $D_i$--interval centered at $x$. Then $\mL_i \to \mL$ in $FL_r$.
\end{lem}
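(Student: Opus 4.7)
The plan is a compactness-and-contradiction argument that pairs an Arzela--Ascoli extraction with the lone axis hypothesis. Suppose for contradiction that $\mL_i \not\to \mL$ in $FL_r$. Then there exist $R_0 \ge 1$ and $\veps_0 > 0$ and a subsequence (still denoted $\mL_i$) with $\mL_i \notin B(\mL, R_0, \veps_0)$ for every $i$. I would reparameterize each $\mL_i$ so that $0$ maps to the midpoint of its length-$D_i$ fellow-traveling interval with $\mL$; then $\ds(\mL_i(0), x) \le \rho$. Since $\mL$ is a periodic folding axis for a fully irreducible, it is contained in a thick part of $\cv$ on which $\dL$ and $\ds$ are bi-Lipschitz equivalent and on which closed bounded $\ds$-balls are compact. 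The $\rho$-neighborhood of this thick part is still a (slightly thinner) thick part, so all segments $\mL_i|_{[-T,T]}$ eventually lie in a common compact region once $D_i > 2T$.

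By Arzela--Ascoli applied to the uniformly Lipschitz geodesics $\mL_i$ and a diagonal argument over $T \to \infty$, I would extract a sub-subsequence and a bi-infinite geodesic $\mL_\infty \colon \mathbb R \to \cv$ with $\mL_i \to \mL_\infty$ uniformly on compact subsets. The $\rho$-fellow-traveling hypothesis passes to the limit, so $\mL_\infty$ $\rho$-fellow travels $\mL$ on all of $\mathbb R$, and Lemma \ref{lem:closetoaxes} then gives $\mL_\infty(t) \to T_\pm^\varphi$ as $t \to \pm\infty$ in $\uos$. The technical heart of the argument, which I expect to be the main obstacle, is to verify that $\mL_\infty$ is itself a fold line (so that $\mL_\infty \in \mathcal A_\varphi$), rather than merely a bi-infinite geodesic. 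The approach is to use that each $\mL_i$ is determined by its continuous family of marked graphs together with folding homotopy equivalences; on each fixed compact subsegment lying in a thick part only finitely many combinatorial types of folding sequences can occur, so after passing to a further subsequence these stabilize and assemble into a fold-line structure on $\mL_\infty$.

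Once $\mL_\infty \in \mathcal A_\varphi$, the lone axis hypothesis (Theorem \ref{lem:ue}) forces $\mL_\infty$ to coincide with $\mL$ as an unparameterized fold line. The uniform convergence $\mL_i \to \mL_\infty = \mL$ on the compact interval $[-R_0/2, R_0/2]$ (up to a bounded parameter shift, which is harmless because the definition of $B(\mL, R_0, \veps_0)$ allows arbitrary choice of fellow-traveling intervals) then yields $\mL_i \in B(\mL, R_0, \veps_0)$ for all sufficiently large $i$ in the subsequence, contradicting our choice. Hence $\mL_i \to \mL$ in $FL_r$, as required.
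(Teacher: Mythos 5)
Your overall strategy is exactly the paper's: extract, via properness of $(\cv,\ds)$ on a thick part and a diagonal argument, a sub-subsequential limit geodesic; use Lemma~\ref{lem:closetoaxes} to see the limit has endpoints $T^\vphi_\pm$; conclude from the lone-axis hypothesis that the limit equals $\mL$; and then observe that this forces the original sequence to eventually enter every basic neighborhood $B(\mL,R_0,\veps_0)$. (Your contradiction framing and the paper's ``every subsequence has a sub-subsequence converging to $\mL$'' framing are equivalent here, since the topology $\tau$ is defined by a basis of neighborhoods.)

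The one substantive point where you diverge is the step you correctly flag as the technical heart: showing that the uniform-on-compacts limit $\mL_\infty$ is actually a \emph{fold line}, and hence a member of the axis bundle $\mathcal A_\vphi$. The paper does not re-derive this; it cites \cite[Lemma 7.3]{br}, which is precisely a compactness statement for folding paths. Your substitute --- ``only finitely many combinatorial types of folding sequences can occur on a compact segment in a thick part, so they stabilize along a subsequence and assemble'' --- is plausible in outline but is not a proof as written. You would need to (i) specify the combinatorial data (the illegal-turn/gate structure and fold schedule) that determines a fold line, (ii) justify the finiteness claim for that data along a length-$T$ segment, which is not immediate since the $\mL_i$ may be periodic fold lines for automorphisms of rapidly growing complexity, and (iii) show the stabilized structures over increasing $T$ patch together into a fold-line structure on all of $\mathbb R$ with the cocycle condition $h_{ut}\circ h_{ts}=h_{us}$. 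Each of these is nontrivial, and (ii) in particular is where I would expect the argument to resist. If you replace this sketch by a citation to \cite[Lemma 7.3]{br}, your proof becomes the paper's proof; as written, this step is a genuine gap.

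One smaller remark: after you know $\mL_\infty \in \mathcal A_\vphi$ and hence $\mL_\infty = \mL$ (as a set, by the lone-axis property), you still need to say a word about parametrization before concluding $\mL_i\in B(\mL,R_0,\veps_0)$. You do address this (``up to a bounded parameter shift\dots harmless because $B(\mL,R_0,\veps_0)$ allows arbitrary choice of fellow-traveling intervals''), and that is correct, but it is worth being explicit that your normalization $\ds(\mL_i(0),x)\le\rho$ pins down the shift to within a bounded amount and that the definition of $B$ quantifies existentially over the subinterval of $\mL$.
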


\begin{proof}
We prove convergence by showing that any subsequence $\{\mL_{i_j}\}_{j\in\N}$ of $\{\mL_i\}$ has a subsequence 
converging to $\mL$. Since $(\cv,d_{sym})$ is proper, each $B(D_{i_j},\rho)$ is compact, so that $\{\mL_{i_j}\}_{j}$ has finer and finer subsequences $\{\mL_{i_k}\}_{k}$ converging pointwise in the respective $B(D_{i_j},\rho)$.
 This allows us, by diagonalization, to take a subsequence of $\{\mL_{i_j}\}_{j}$ converging to a fold line $\mL'$ in $FL_r$. (That the limit $\mL'$ is indeed a folding line comes from \cite[Lemma 7.3]{br}.) By the assumptions on the $\mL_i$, we know that $\mL'$ must be a fold line in $N(\mL,\rho)$.  Moreover, by Remark~\ref{r:sg}, the $\mL_i$ are geodesics and therefore $\mL'$  is a geodesic as well. Then Lemma \ref{lem:closetoaxes} implies that $\mL'$ shares endpoints with $\mL$. We conclude that $\mL'$ is in the axis bundle of $\vphi$. Since $\vphi$ has only a lone axis, $\mL' =\mL$ and the proof is complete.
\qedhere
\end{proof}

\vskip10pt

\subsection{Principal and triangular automorphisms}\label{ss:Principal}

Our main theorem can be informally stated as saying that a random element of $\Out(F_r)$ is triangular. We explain this terminology here.

 Let $\Delta_r$ denote the graph consisting of the disjoint union of $2r-3$ triangles. As in \cite{stablestrata}, we call an ageometric fully irreducible $\vphi\in\out$ with $\iwp\cong\Delta_r$ \emph{principal}.

 By the main theorem of \cite{loneaxes}, principal outer automorphisms are lone axis outer automorphisms. We say their axis is in the \emph{principal stratum} of $Fl_r$.
 If for an ageometric fully irreducible $\vphi\in\out$ the graph $\iwp$ is a disjoint union of $\le 2r-3$ triangles, we call $\vphi$ \emph{triangular} (called \emph{principal basin} in \cite{stablestrata}).

\begin{rk}\label{rk:lone1}
In the present paper we only defined the notion of $\iwp$ for an ageometric fully irreducible $\vphi\in\out$. Thus for us principal and triangular fully irreducible outer automorphisms are ageometric by definition. However, in \cite{hm11}, Handel and Mosher defined $\iwp$ in the more general context of any nongeometric fully irreducible $\vphi\in\out$.  If $\vphi\in\out$ is a nongeometric fully irreducible outer automorphism with $\iwp$  being a disjoint union of $\le 2r-3$ triangles, then for the rotationless index of $\vphi$ we have $i(\vphi)<1-r$ and therefore $\vphi$ is ageometric in this case. Thus we could have omitted the ageometric assumption in the definition of principal and replaced it by ``nongeometric", and the ageometric property for principal and triangular fully irreducible outer automorphisms would still have been automatically satisfied in that case.
\end{rk}


It is proved in \cite{stablestrata} that principal outer automorphisms exist in each rank:

\begin{prop}[\cite{stablestrata} Example 6.1]\label{p:PrincipalExist}
For each $r\ge 3$, there exists a principal $\vphi\in\out$.
\end{prop}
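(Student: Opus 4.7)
The plan is to construct, for each $r \ge 3$, an explicit irreducible train track map $g \colon \Gamma \to \Gamma$ on a rank-$r$ graph representing a fully irreducible, ageometric $\vphi \in \out$, and to read off from $g$ that $\iwp$ consists of exactly $2r-3$ triangles.

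First I would choose $\Gamma$ to be a connected trivalent graph of rank $r$, which necessarily has $2r-2$ vertices and $3r-3$ edges. The map $g$ is designed so that exactly $2r-3$ of the $2r-2$ vertices are principal (that is, $g$-fixed with all three incident directions $g$-fixed), while the remaining vertex has at most two fixed directions and is therefore nonprincipal. The edge map is specified by an explicit positive substitution rule so that the transition matrix $M(g)$ is primitive (in the Perron--Frobenius sense). Primitivity of $M(g)$, combined with the edge metric induced by the Perron--Frobenius eigenvector, makes $g$ an expanding irreducible train track map.

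Next I would verify the three defining properties. Full irreducibility of $\vphi$ follows from standard sufficient conditions for irreducible train track maps, for instance primitivity of $M(g)$ together with the requirement that each local Whitehead graph $LW(g,v)$ is connected and has no cut vertex. The absence of periodic Nielsen paths is checked by a finite algorithmic inspection of the illegal turns of $g$; once this is ruled out, $\vphi$ is ageometric by the criterion recalled in \S\ref{ss:ageo}. Finally, since $g$ is PNP-free,
\[
\iwp \;=\; \bigsqcup_{x \textrm{ principal}} SW(g,x),
\]
and by construction each of the $2r-3$ principal vertices $x$ contributes a triangle $SW(g,x)$ to this union, yielding $\iwp \cong \Delta_r$; hence $\vphi$ is principal.

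The main obstacle is realizing such a map simultaneously in every rank. For the base case $r=3$, one can exhibit an ad hoc example on a trivalent graph with four vertices and verify the conditions directly. For the inductive step $r \to r+1$, one enlarges $\Gamma$ by inserting a new trivalent vertex together with two new edges and extends $g$ so as to introduce exactly one new principal vertex with triangular stable Whitehead graph, without altering any existing $SW(g,x)$ component and without creating any periodic Nielsen path. The delicate point is simultaneously preserving primitivity of the transition matrix and confirming that each newly introduced turn is taken by some iterate of $g$ on an edge; this combinatorial bookkeeping is carried out in full detail in \cite[Example~6.1]{stablestrata}, to which we refer for the explicit formulas.
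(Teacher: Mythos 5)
The paper offers no proof of this proposition: it is attributed directly to \cite[Example 6.1]{stablestrata}, and your proposal likewise terminates by deferring to that same reference ``for the explicit formulas.'' In that sense the two agree. However, the sketch you place in front of that citation contains two arithmetic inconsistencies that would derail the construction as described.

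First, the inductive step does not preserve the trivalent setup you fix at the outset. A connected trivalent graph of rank $r$ has $2r-2$ vertices and $3r-3$ edges (as you correctly note), so passing from rank $r$ to rank $r+1$ requires adding \emph{two} vertices and \emph{three} edges, not ``a new trivalent vertex together with two new edges.'' As written, the new vertex cannot have degree $3$ while keeping all preexisting vertices trivalent and raising the rank by exactly one. Second, and more seriously, the number of principal vertices must jump from $2r-3$ to $2(r+1)-3 = 2r-1$, an increase of two; introducing ``exactly one new principal vertex with triangular stable Whitehead graph'' would leave $\iwp$ with the wrong number of components, and the resulting outer automorphism would fail to be principal. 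So even if the ad hoc $r=3$ example works, the inductive step as stated cannot produce a principal automorphism in rank $r+1$.

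A smaller issue: the sufficient condition you quote for full irreducibility (``primitivity of $M(g)$ together with each $LW(g,v)$ connected and having no cut vertex'') conflates the Full Irreducibility Criterion (primitive transition matrix, no PNPs, Whitehead graphs connected) with the Mosher--Pfaff lone-axis criterion (Theorem~\ref{lem:ue}), where the ``no cut vertex'' clause actually belongs. These would need to be disentangled for the argument to go through cleanly. Since all of the nontrivial content is ultimately outsourced to \cite[Example 6.1]{stablestrata}, you should either reproduce that construction faithfully or simply cite it as the paper does, rather than attach a sketch whose bookkeeping does not close.
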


Before proceeding, we record a  key consequence of being a triangular outer automorphism.

\begin{prop}\label{p:trivalent}
Let $\vphi\in\out$ be a triangular fully irreducible outer automorphism. Then every branch point in $T_+^{\vphi}$ is trivalent.

\end{prop}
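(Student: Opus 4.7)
The plan is to show this essentially follows directly from Proposition~\ref{p:br}, once we pass to a rotationless power with a PNP-free train track representative.

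First, I would reduce to the rotationless, PNP-free setting. Since $\vphi$ is ageometric, there is some $k \ge 1$ such that $\vphi^k$ is rotationless, and by \S \ref{ss:ageo}, $\vphi^k$ admits an expanding irreducible train track representative $g\colon \Gamma \to \Gamma$ with no PNPs. By the definition of the ideal Whitehead graph for non-rotationless automorphisms, $IW(\vphi) = IW(\vphi^k)$, so $IW(\vphi^k)$ is a disjoint union of $\le 2r-3$ triangles. Moreover, $T_+^{\vphi^k} = T_+^{\vphi}$ (as projective classes in $\uos$, since $\vphi$ acts on $T_+^{\vphi}$ by homothety), so the branch points and their valencies in the two trees coincide. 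Thus it suffices to prove the statement for $\vphi^k$ using the representative $g$.

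Next I would invoke Proposition~\ref{p:br}. For each branch point $b \in T_+^{\vphi^k}$, part (2) gives a principal vertex $\tilde x \in \tilde \Gamma$ and a principal lift $\tilde g$ centered at $\tilde x$ with $g_\infty(\tilde x) = b$. Part (1) then provides a bijection between the set of $\tilde g$-fixed directions at $\tilde x$, which is exactly the vertex set of $W(\tilde g) = \widetilde{SW}(\tilde x)$, and the set of directions of $T_+^{\vphi^k}$ at $b$. In particular, the valence of $b$ in $T_+^{\vphi^k}$ equals the number of vertices of the component of $IW(\vphi^k)$ associated to the equivalence class of $\tilde g$.

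Finally, since $IW(\vphi^k)$ is a disjoint union of triangles, every component has exactly $3$ vertices. Therefore every branch point $b \in T_+^{\vphi^k} = T_+^{\vphi}$ has valence exactly $3$, proving that $T_+^{\vphi}$ is trivalent. There is no substantive obstacle here; the whole content of the proposition is the dictionary provided by Proposition~\ref{p:br} between components of $\iwp$ and branch points of $T_+^{\vphi}$, together with matching vertex counts to valencies.
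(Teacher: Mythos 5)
Your proof is correct and follows the same route as the paper: pass to a rotationless power with a PNP-free train track representative, then apply both parts of Proposition~\ref{p:br} to identify the valence of each branch point of $T_+^{\vphi}$ with the vertex count of the corresponding component of $\iwp$, which is $3$ since each component is a triangle. The only cosmetic difference is that you phrase the bijection via $W(\tilde g)=\widetilde{SW}(\tilde x)$ while the paper passes through the isomorphic $SW(g,x)$.
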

\begin{proof}
By replacing $\vphi$ by a positive power, we may assume that $\vphi$ is rotationless (note that this operation does not change $T_+^{\vphi}$ or $IW(\vphi)$; in particular the outer automorphism remains triangular).
Recall that all triangular outer automorphisms are ageometric fully irreducible. As noted before, by \cite[Theorem~3.2]{bf94} every ageometric fully irreducible outer automorphism admits a train track representative without PNPs.
Thus we can represent $\vphi$ by a rotationless expanding irreducible train track map $g:\G\to\G$ with no PNPs.

Let $b\in T_+^{\vphi}$ be a branch point. Then, by Proposition~\ref{p:br}(2), we have $b=g_\infty(\tilde x)$ for a principal vertex $\tilde x\in \tilde G$ that is the center of a principal lift $\tilde g$ of $g$. Proposition~\ref{p:br}(1) then implies that $g_\infty$ induces a bijection between the set of directions at $b$ in $T_+^{\vphi}$ and the set of $\tilde g$-fixed directions in $\tilde G$. If $\tilde x$ is a lift of the (principal vertex) $x\in \G$ then $\widetilde{SW}(\tilde x)\cong SW(g,x)$ and there is a bijection between the set of $\tilde g$-fixed directions at $\tilde x$ and the set of $g$-fixed directions at $x$. By the assumptions on $\vphi$,
\[
IW(\vphi)=\sqcup SW(g,v)
\]
where the union is taken over all principal vertices $v$ of $\G$. Thus $SW(g,x)$ is one of the triangular components of $IW(\vphi)$, which implies that $\widetilde{SW}(\tilde x)$ has three vertices, and therefore $T_+^{\vphi}$  has three directions at $b$. It follows that $b$ is trivalent, as required.
\end{proof}

The following is a consequence of \cite[Theorem A]{stablestrata}, see Corollary~5.4 in \cite{stablestrata}.

\begin{prop}[\cite{stablestrata} Theorem A]\label{t:StableStrata}\cite[Corollary~5.4]{stablestrata}
Suppose that $r\ge 3$ and $\mL$ is the axis in $\cv$ of a principal $\vphi\in\out$. Then there exist constants $R,\veps>0$ so that for any simple periodic fold line $\mL'\in B(\mL,R,\veps)$, we have that $\mL'$ is an axis of a triangular fully irreducible $\vphi'\in\out$.
\end{prop}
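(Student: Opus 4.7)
The plan is to invoke \cite[Corollary 5.4]{stablestrata} directly; that corollary is a coarse stability statement of precisely the desired form, derived from the infinitesimal stability result \cite[Theorem A]{stablestrata}. To set up the hypotheses, I would first use that since $\vphi$ is principal, Theorem~\ref{lem:ue} implies that $\mL$ is the \emph{unique} axis of $\vphi$, and by Remarks~\ref{r:simpleaxes} and~\ref{r:greedyfoldlines} this lone axis is simple, periodic, and $\vphi$-invariant. In particular $\mL$ is determined by an expanding irreducible train track representative $g\colon \G \to \G$ of $\vphi$ whose principal vertex data realizes $\iwp \cong \Delta_r$. Moreover, since $\mL$ is periodic, it stays in a uniform thick part of $\cv$, giving the quantitative control needed for comparisons with nearby fold lines via Lemma~\ref{l:fell}.

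Next, I would appeal to the infinitesimal conclusion of \cite[Theorem A]{stablestrata}: sufficiently small perturbations of the train track data defining $\mL$ yield new train tracks whose ideal Whitehead graph is obtained from $\Delta_r$ by deleting some (possibly empty) collection of entire triangle components. This is the phenomenon alluded to in the introduction, in which a principal periodic vertex of $g$ may fail to be periodic for a slightly perturbed train track, with the consequent loss of an entire triangular component of the ideal Whitehead graph. Coarsifying this statement by combining it with the lone axis property of $\vphi$ and with a Lemma~\ref{l:fell}-type passage from metric fellow-travelling in the topology $\tau$ on $Fl_r$ to closeness of the underlying combinatorial train track data, I would obtain constants $R, \veps > 0$ so that every simple periodic fold line $\mL' \in B(\mL, R, \veps)$ is determined by an expanding irreducible train track representative of some outer automorphism $\vphi' \in \out$ whose ideal Whitehead graph $IW(\vphi')$ is a disjoint union of at most $2r-3$ triangles. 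By the definition recalled in \S\ref{ss:Principal}, such $\vphi'$ is triangular.

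The main obstacle is this translation from the combinatorial, infinitesimal stability of train track data to the coarse metric statement in the topology $\tau$. One must verify that the $(R,\veps)$-fellow travelling is sufficient both to produce a nearby train track representative (with its principal vertex structure correctly inheriting a subset of the components of $\Delta_r$) and to preserve the irreducibility and expansion properties, so that $\vphi'$ is genuinely a (rotationless ageometric) fully irreducible outer automorphism and $\mL'$ really is an axis for $\vphi'$, rather than merely an a priori unrelated simple periodic fold line. This is precisely the bridge carried out in \cite{stablestrata} in deducing Corollary~5.4 from Theorem~A, and once that machinery is invoked the proposition follows immediately.
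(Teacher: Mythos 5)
Your proposal ultimately takes the same approach as the paper: Proposition~\ref{t:StableStrata} is presented in the text purely as a citation of \cite[Theorem A and Corollary~5.4]{stablestrata}, with no independent proof supplied, and you correctly defer all the real work to that external result. One clarification is worth flagging: you invoke the lone axis property (via Theorem~\ref{lem:ue}) and a Lemma~\ref{l:fell}-style comparison as part of the ``coarsification'' proving this proposition, but in the paper's own development those tools are used not here but only afterwards, in Corollary~\ref{cor:close_enough}, whose purpose is exactly to promote the specific $\veps$ of Proposition~\ref{t:StableStrata} to an arbitrary $\rho$ by a limiting argument (Lemma~\ref{lem:getting_close}) that hinges on $\vphi$ having a lone axis. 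Since both you and the paper delegate the proof of the present statement to \cite[Corollary~5.4]{stablestrata}, this is only a matter of misplaced attribution in your surrounding narrative, not a gap.
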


The following is a strengthening of Proposition \ref{t:StableStrata} using Lemma \ref{lem:getting_close}.

\begin{cor}\label{cor:close_enough}
Let  $\vphi \in \out$ be a principal (and hence lone axis) fully irreducible outer automorphism with axis $\mL$ in $\cv$. Let $\rho\ge 0$ be arbitrary.  Then there exists some value $R\ge 1$ such that, whenever $\psi$ is a fully irreducible outer automorphism with an axis $\mL'$ satisfying that $\mL'\in B(\mL,R,\rho)$, we have that $\psi$ is a triangular fully irreducible outer automorphism.
\end{cor}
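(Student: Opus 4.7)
The plan is to argue by contradiction, combining Lemma~\ref{lem:getting_close} with the stability result of Proposition~\ref{t:StableStrata}. Suppose the conclusion fails; then there exist a sequence $R_i\to\infty$ and, for each $i$, a fully irreducible outer automorphism $\psi_i$ together with a simple periodic axis $\mL_i\in B(\mL, R_i, \rho)$ such that $\psi_i$ is not triangular.

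To invoke Lemma~\ref{lem:getting_close} I first center the fellow-traveling intervals. Since $\mL$ is $\vphi$-invariant under the right action of $\out$ on $\cv$, the translated axis $\mL_i\vphi^{k_i}$ is a simple periodic axis of the conjugate $\vphi^{-k_i}\psi_i\vphi^{k_i}$, which is triangular if and only if $\psi_i$ is. Choosing $k_i\in\Z$ suitably, one can arrange that $\mL_i\vphi^{k_i}$ $\rho$-fellow-travels $\mL$ along an interval of length at least $R_i - 2\ell$ centered exactly at our chosen basepoint $x\in\mL$, where $\ell$ is the translation length of $\vphi$ along $\mL$. After this replacement, Lemma~\ref{lem:getting_close} yields $\mL_i\to\mL$ in the topology $\tau$ on $Fl_r$.

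Let $R_0, \veps_0$ denote the constants furnished by Proposition~\ref{t:StableStrata}. The convergence above gives $\mL_i \in B(\mL, R_0, \veps_0)$ for all sufficiently large $i$, so Proposition~\ref{t:StableStrata} produces a triangular fully irreducible $\vphi_i'$ having $\mL_i$ as an axis. To finish, I will compare $\psi_i$ with $\vphi_i'$: since both have $\mL_i$ as an axis, taking limits along $\mL_i$ at $\pm\infty$ gives $T_+^{\psi_i}=T_+^{\vphi_i'}$ and $T_-^{\psi_i}=T_-^{\vphi_i'}$ in $\uos$. Since the $\out$-stabilizer of this tree pair is virtually cyclic for a fully irreducible, there exist positive integers $a, b$ with $\psi_i^a = (\vphi_i')^b$; invariance of $IW$ under positive powers (recorded in Section~\ref{ss:iwgrelationships}) then forces $IW(\psi_i) = IW(\vphi_i')$, which is a disjoint union of triangles. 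Hence $\psi_i$ is triangular, contradicting the choice of $\psi_i$.

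The principal obstacle I anticipate is this last step: transferring triangularity from $\vphi_i'$ back to $\psi_i$ when the two automorphisms share only the axis $\mL_i$. Virtual cyclicity of the $\out$-stabilizer of a fully irreducible's invariant tree pair, together with invariance of the ideal Whitehead graph under positive powers, supplies the identification $IW(\psi_i)=IW(\vphi_i')$; one must however be attentive to the distinction between $\vphi_i'$-invariance of the fold line $\mL_i$ and $\mL_i$ merely being a line in the axis bundle of $\vphi_i'$, as $\vphi_i'$ need not itself be lone axis when it is triangular but not principal.
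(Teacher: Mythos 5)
Your argument follows the same overall strategy as the paper's proof: argue by contradiction with a sequence $R_i\to\infty$ and non-triangular $\psi_i$ with axes $\mL_i\in B(\mL,R_i,\rho)$; translate along $\mL$ by powers of $\vphi$ to center the fellow-traveling intervals at a fixed $x$; invoke Lemma~\ref{lem:getting_close} to get $\mL_i\to\mL$ in $Fl_r$; and then apply Proposition~\ref{t:StableStrata} to reach a contradiction. The one place where you diverge is worth commenting on: the paper's proof treats Proposition~\ref{t:StableStrata} as saying directly that any fully irreducible $\psi$ whose axis lies in $B(\mL,R',\veps)$ is triangular, whereas the proposition as literally stated only asserts that $\mL'$ is an axis of \emph{some} triangular $\vphi'$. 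You notice this mismatch and supply a bridging argument: $\psi_i$ and $\vphi_i'$ share the fold line $\mL_i$ in their axis bundles, hence have the same attracting and repelling trees; the $\out$-stabilizer of that tree pair is virtually cyclic, so they have a common positive power; and the ideal Whitehead graph is invariant under positive powers, forcing $IW(\psi_i)=IW(\vphi_i')$. This step is correct (the orientation of $\mL_i$ pins down $T_\pm$ in the same order for both automorphisms, so the shared power can be taken with both exponents positive), and it makes explicit something the paper glosses over, presumably because the cited result from \cite{stablestrata} is used there in its stronger form. Your final caveat---that $\mL_i$ may be only one line in the axis bundle of $\vphi_i'$ rather than a $\vphi_i'$-invariant line---is exactly right and your argument correctly avoids needing invariance, since only the limit trees are used. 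In short, the proof is correct and is essentially the paper's argument, with a genuine and well-justified additional step filling in the transfer of triangularity from $\vphi_i'$ to $\psi_i$.
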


\begin{proof}
By Proposition~\ref{t:StableStrata}, there exist values $\veps\ge 0$ and $R'\ge 1$ such that whenever a fully irreducible $\psi\in\out$ has an axis $\mL'\in B(\mL,R',\veps)$, we have that $\psi$ is triangular.

We now argue by contradiction. Suppose the conclusion of the corollary fails. Then there exists a sequence of positive real numbers $R_i\to\infty$ as $i\to\infty$ and a sequence of fully irreducible $\psi_i\in\out$ with axes $\mL_i\in B(\mL,R_i,\rho)$ such that for every $i\ge 1$, we have that $\psi_i$ is not triangular.

Since $\mL$ is $\vphi$-periodic and a period of $\vphi$ is a compact interval, by choosing a specific point $x\in \mL$ and replacing each $\mL_i$ by some translate by a power of $\vphi$ (which corresponds to replacing $\psi_i$ by its conjugate by that power of $\vphi$), we may assume that each $\mL_i$ $\rho$--fellow travels $\mL$
along a length $D_i$--interval centered at $x$, where $\lim_{i\to\infty} D_i=\infty$. Then, by Lemma~\ref{lem:getting_close}, we have $\mL_i\to \mL$. This implies that for a sufficiently large index $i_0\ge 1$ we have that $\mL_{i_0}\in B(\mL,R',\veps)$, and therefore $\psi_{i_0}$ is a triangular fully irreducible outer automorphism, yielding a contradiction.
\end{proof}

\vskip10pt

\section{The free factor graph $\FF$} \label{sec:ff}
The \emph{free factor graph} $\FF$ for $F_r$ is the graph having a vertex for each conjugacy class of proper nontrivial free factors of $F_r$. When either $A \subsetneq B$ or $B \subsetneq A$ an edge connects the vertices $[A]$, $[B]$. The free factor graph $\FF$ was proven to be Gromov hyperbolic by Bestvina and Feighn in \cite{bf11} (this is also implied by \cite{kr14} applied to \cite{hm13free}).

Recall from Reynolds \cite{ReynoldsArational} that, for
 a point $T\in \partial\cv$, a proper free factor $A$ of $F_r$ is a \emph{reducing factor} for $T$ if there exists an $A$-invariant subtree $T'$ in $T$ such that the action of $A$ on $T'$ has a dense orbit.
In particular, if $A$ is a free factor that acts elliptically on $T$ fixing a point $p$ in $T$, then $A$ is a reducing factor, by taking $T'=\{p\}$. We denote the set of reducing factors of $T$ by $R(T)$.
The tree $T$ is \emph{arational} if it has no reducing free factors.

We let $\AT$ denote the set of \emph{arational trees} $T\in\partial\cv$ (see \cite{ReynoldsArational}). Bestvina and Reynolds \cite[Theorem 1.1]{br}, and independently Hamenst\"adt \cite{h12}, proved that $\partial\FF$ is homeomorphic to equivalence classes of arational trees. This is more precisely explained in what follows.

\vskip10pt

\subsection{The projection map $\pi\colon\cv\to \FF$}\label{ss:Projection}

\begin{df}[Projection map $\pi\colon\cv\to \FF$]\label{d:Projection}
We let $\pi\colon\cv\to \FF$ denote the \emph{projection map} defined by sending $T$ to the collection of conjugacy classes of proper free factors that act elliptically on a simplicial tree obtained by equivariantly collapsing certain edges in $T$ to points. In terms of the graph description of $\cv$, $\pi(T\slash\F_r)$ is sent to the collection of conjugacy classes of nontrivial free factors that arise as the fundamental group of a proper subgraph of $T\slash\F_r$. We will use extensively the observation that all points in the same open simplex of $\cv$ have the same projection under $\pi$.

Using the main results from Bestvina--Reynolds \cite{br},
we follow \cite[Theorem 5.5]{DahmaniHorbez} and extend $\pi$ to a map, which we also call $\pi$ (called $\psi$ in \cite{DahmaniHorbez}), $\pi\colon \uos \to \FF\cup\partial_{\infty}\FF$ satisfying: There exists a value $\eta\in\R$ so that
\begin{itemize}
  \item for each $T\in\AT$ and sequence $\{T_n\in \cv\}$ converging to $T$, the sequence $\{\pi(T_n)\}$ converges to $\pi(T) \in \partial_\infty \FF$ and
  \item for each $T\in\ol\cv \backslash \AT$ and greedy fold line $\gamma\from\R_{+}\to\cv$, if $\gamma(t)$ converges to $T$ as $t\to +\infty$ in $\uos$, then there exists an $N\in\R_{+}$ so that $d(\pi(\gamma(t)),\pi(T))\le \eta$ for each $t>N$.
\end{itemize}

As in \cite[pg. 115]{bf11}, for $\Gamma,\Gamma'\in\cv$, we define
\begin{equation}\label{FreeFactorDistance}
  d_{\FF}(\Gamma,\Gamma'):= \sup\{d_{\FF}(A,A')\mid A\in\pi(\Gamma), A'\in\pi(\Gamma')\}.
\end{equation}
\end{df}

With these definitions, $\pi\colon\cv\to \FF$ is $\out$-equivariant and coarsely $L_{\cv}$--Lipschitz \cite[Proposition 9.3]{bf11}, for some constant $L_{\cv}$ depending only on the rank of the fixed free group. In the course of establishing hyperbolicity of $\FF$, Bestvina--Feighn also showed that $\pi$-images in $\FF$ of geodesics in $\cv$ are uniform unparametrized quasi-geodesics \cite[Corollary 6.5]{bf11}. That is, they can be reparameterized to be $Q$--quasigeodesics for some $Q\ge1$ depending only on the rank of $F_r$.

It follows from the \cite[Theorem 5.5]{DahmaniHorbez} construction and \cite[Lemma 7.1]{br} that if $T \in \partial\cv$ is not arational, then $\pi(T)$ is by definition the set of reducing factors $R(T)$ of $T$. By work of Bestvina--Reynolds \cite{br}, there is a constant $\mathfrak{d} \ge 0$ such that if $S,T \in \uos$ with $R(T) \cap R(S) \neq \emptyset$,
\begin{equation}\label{e:mathfrakd}
d_{\FF}(R(T),R(S)) \le \mathfrak{d}.
\end{equation}

The constants $L_{\cv}$, $Q$, and $\mathfrak{d}$ established here will be important in this paper.

\vskip5pt

\vskip10pt

\subsection{Nearest-point projection and ${\Pr}_\gamma$}

Recall from \S \ref{ss:msnotation} the definition of a Gromov product and that, since $\FF$ is $\delta$-hyperbolic, given points $x,y,z\in\FF$, we have that
\begin{equation}\label{e:2delta}
|d(x,\mathbf{n}_{[x,y]}(z))-(x|y)_z|<2\delta,
\end{equation}
where $\mathbf{n}_{[x,y]}(z)$ denotes the nearest-point projection of $z$ to the geodesic segment $[x,y]$ connecting $x$ and $y$ in $\FF$ and $\delta$ is the hyperbolicity constant of $\FF$.

Suppose that $\gamma\from I \to \cv$ is a greedy folding path. As in 
\cite{dt17}, we use $\mathbf{n}_{\pi\circ\gamma}$ to denote the nearest-point projection to the image of $\pi\circ\gamma$ in $\FF$.

With the knowledge that a greedy folding path is a standard geodesic, the following is a direct consequence of \cite[Lemma 4.2]{dt17}. Except for where references are given, the lemma will be the extent of what we need to know about the projection function $\Pr_{\gamma}\from \cv\to\gamma(I)$ of \cite[\S 6]{bf11}. Note that \cite[\S 4.1]{dt17} is also dedicated to explaining the function. We need only the following lemma:

\begin{lem}[\cite{dt17} Lemma 4.2]\label{dt_4.2} There exists a constant $\mathfrak{c}\ge 0$, depending only on the rank $r$ of $F_r$, so that for any $T\in\cv$ and a greedy folding path $\gamma\from I \to\cv$, we have that
\[
d_{\FF}(\pi({\Pr}_\gamma (T)), \mathbf{n}_{\pi\circ\gamma}(\pi(T)))\le \mathfrak{c}.
\]
\end{lem}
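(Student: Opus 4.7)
The plan is to argue that the two projection recipes coarsely agree because both recipes produce a point of $\gamma$ at which geodesic rays from $T$ first enter a uniform neighborhood of $\gamma$ (inside $\cv$ in one case, inside $\FF$ in the other). Set $S = {\Pr}_\gamma(T)$ and $n = \mathbf{n}_{\pi\circ\gamma}(\pi(T))$; the goal is an upper bound on $d_{\FF}(\pi(S), n)$ depending only on $r$.

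First I would invoke the defining coarse property of Bestvina--Feighn's projection ${\Pr}_\gamma$ from [bf11, Section 6]: there exist a uniform constant $D_0$ and points $S_-, S_+ \in \gamma(I)$ lying on either side of $S$ such that any $\cv$-geodesic from $T$ to a point on $\gamma$ sufficiently far past $S_\pm$ passes within $\cv$-distance $D_0$ of $S$. Pushing this forward, since $\pi$ is coarsely $L_\cv$-Lipschitz and sends geodesics in $\cv$ to unparameterized $Q$-quasigeodesics in $\FF$ [bf11, Corollary 6.5, Proposition 9.3], the $\pi$-images of these $\cv$-geodesics are $Q$-quasigeodesics in $\FF$ from $\pi(T)$ to distant points of $\pi\circ\gamma$, each passing uniformly close to $\pi(S)$.

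Next I would use hyperbolicity of $\FF$. Because $\pi\circ\gamma$ is itself a $Q$-quasigeodesic in the $\delta$-hyperbolic space $\FF$, the nearest point projection $n$ of $\pi(T)$ to $\pi\circ\gamma$ is coarsely characterized, via the Gromov-product estimate \eqref{e:2delta} and the Morse lemma for quasigeodesics, as the point where a geodesic in $\FF$ from $\pi(T)$ to any sufficiently distant point of $\pi\circ\gamma$ first comes within a bounded distance of $\pi\circ\gamma$. Comparing the two characterizations, both $\pi(S)$ and $n$ lie uniformly close to the same subsegment of $\pi\circ\gamma$, and so they lie within a uniform additive constant $\mathfrak{c}$ of each other in $\FF$.

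The main obstacle is verifying that the coarse funneling property of ${\Pr}_\gamma$ inside $\cv$ survives transport by $\pi$ with constants that only depend on $r$; that is, that the $\cv$-quasi-guarantee that geodesics pass near $S$ translates into an $\FF$-guarantee that their $\pi$-images pass near $\pi(S)$ with the same uniformity. This is essentially the content of [dt17, Lemma 4.2], whose proof packages the two ingredients above. Once this funneling is established, the stated bound follows immediately from hyperbolicity of $\FF$ and the quasigeodesic constants $L_\cv$ and $Q$ already recorded in the excerpt.
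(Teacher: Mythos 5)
The paper does not actually prove this lemma; its entire "proof" is the sentence immediately preceding the statement: \emph{``With the knowledge that a greedy folding path is a standard geodesic, the following is a direct consequence of [dt17, Lemma 4.2].''} In other words, the paper simply cites Dowdall--Taylor, with the one substantive observation that greedy folding paths are ``standard geodesics'' in the sense required for that lemma to apply.

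Your proposal, by contrast, tries to sketch the internals of [dt17, Lemma 4.2], and while the two ingredients you name (a contraction/funneling property for $\Pr_\gamma$ in $\cv$, plus hyperbolicity and the Morse lemma in $\FF$) are indeed the right ones, the sketch has two issues worth flagging. First, you call the funneling property ``the defining coarse property'' of $\Pr_\gamma$, but in [bf11, \S 6] the projection $\Pr_\gamma$ is \emph{defined} combinatorially (via legality of candidate loops along the folding path), and the funneling behavior is a nontrivial theorem about it, not its definition; conflating the two obscures where the real work lies. Second, and more importantly, you concede in your last paragraph that the key step — transporting the $\cv$-funneling into an $\FF$-funneling with uniform constants — ``is essentially the content of [dt17, Lemma 4.2].'' That is circular as a proof of the lemma; you are citing the result to prove the result. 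The paper avoids this by simply taking [dt17, Lemma 4.2] as a black box, and the only thing it adds is the remark (which your sketch omits) that greedy folding paths are standard geodesics, which is what makes the citation legitimate in the first place. So your proposal is not wrong in its heuristics, but it is not a proof, and it misses the one genuine verification the paper identifies as needed.
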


For this reason, we will at times in statements replace $\pi(Pr_{\gamma}(T))$ with $\mathbf{n}_{\pi\circ\gamma}(\pi(T))$ or vice versa.


The following lemma, Lemma \ref{DH_5.11}, is indirectly stated in the proof of \cite[Proposition 8.5]{br} and then directly stated as \cite[Proposition 5.11]{DahmaniHorbez}.

\begin{lem}[\cite{DahmaniHorbez} Proposition 5.11]\label{DH_5.11}
There exists a value $M>0$ so that for all sufficiently large $B>0$, all $y_0,y_1 \in \cv$, all trees $T \in \uos$, and all sequences $(T_n)$ in $\out$ converging to $T$ we have: If $(\pi(T_n))\mid \pi(y_1))_{\pi(y_0)} \ge B$ for all $n\ge 0$, then $(\pi(T))\mid \pi(y_1))_{\pi(y_0)} \ge B - M$.
\end{lem}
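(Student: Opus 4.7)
The plan is to prove this as a lower semi-continuity statement for Gromov products in $\FF$ under the convergence $T_n \to T$ in $\uos$, leveraging the extension of $\pi$ to $\pi\from \uos \to \FF \cup \partial_\infty\FF$ from \cite[Theorem 5.5]{DahmaniHorbez}. The general $\delta$-hyperbolic geometry fact I will invoke is that there is a constant $C=C(\delta)$ such that whenever $a_n \to a$ in $Y \cup \partial_\infty Y$ and $b,c \in Y$ are fixed, one has $(a \mid b)_c \ge \limsup_n (a_n \mid b)_c - C$; this is because a long initial segment of a quasi-geodesic from $c$ to $a_n$ fellow-travels an initial segment of a quasi-geodesic from $c$ to $a$. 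I would split the argument into two cases depending on whether the limit tree $T$ is arational.

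If $T$ is arational, the first bullet point in the construction of the $\pi$-extension directly gives $\pi(T_n) \to \pi(T) \in \partial_\infty \FF$ in $\FF \cup \partial_\infty \FF$. The hyperbolic lower-semicontinuity applied with $a_n = \pi(T_n)$, $a = \pi(T)$, $b = \pi(y_1)$, $c = \pi(y_0)$ then gives $(\pi(T) \mid \pi(y_1))_{\pi(y_0)} \ge B - C$, up to a small additional error arising from the fact that the $\pi$-images are bounded-diameter subsets of $\FF$ rather than single points, an error controlled by $\mathfrak{d}$ via \eqref{e:mathfrakd}.

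If $T$ is non-arational, then $\pi(T) = R(T)$ is a bounded-diameter subset of $\FF$, and I would aim to show that the $\pi(T_n)$ lie at uniformly bounded $\FF$-distance from $\pi(T)$ for all large $n$, which combined with the triangle inequality for Gromov products yields the desired bound. For this I would take greedy folding paths $\gamma_n\from [0,\infty)\to \cv$ starting at $y_0$ with $\gamma_n(t) \to T_n$ in $\uos$, and a greedy folding path $\gamma$ starting at $y_0$ with $\gamma(t) \to T$. By the second bullet of the $\pi$-extension, $\pi\circ\gamma_n$ and $\pi\circ\gamma$ eventually lie within distance $\eta$ of $\pi(T_n)$ and $\pi(T)$, respectively, in $\FF$. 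Using compactness of $\uos$ together with stability of greedy folding paths emanating from a fixed basepoint under convergence of endpoints, one can arrange, after passing to a subsequence, that $\gamma_n \to \gamma$ uniformly on compact subsets, so $\pi\circ\gamma_n \to \pi\circ\gamma$ coarsely in $\FF$; combined with the uniform quasi-geodesic property of these $\pi$-images \cite[Corollary 6.5]{bf11}, this forces $\pi(T_n)$ and $\pi(T)$ to coarsely coincide in $\FF$ for large $n$.

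The main obstacle lies in this non-arational case, where one must simultaneously control the convergence of folding paths, the quasi-geodesicity of their $\pi$-images, and the interaction between the $\eta$-distance estimate and the Gromov product. The final constant $M$ will depend on $\delta$, $\mathfrak{d}$, $\eta$, and the uniform quasi-geodesic constant $Q$, and the hypothesis that $B$ be sufficiently large is needed to ensure that the Gromov product really records fellow-traveling with a quasi-geodesic tail rather than short-range noise.
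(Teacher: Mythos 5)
The paper does not actually prove this lemma; it is imported verbatim as \cite[Proposition~5.11]{DahmaniHorbez} (and traced to the proof of \cite[Proposition~8.5]{br}), so there is no in-paper argument to compare against. Evaluating your proposal on its own terms: the arational case is essentially correct. The convergence $\pi(T_n)\to\pi(T)\in\partial_\infty\FF$ does follow from the first bullet of Definition~\ref{d:Projection}, and the hyperbolic lower semi-continuity of Gromov products (up to $O(\delta)$, plus a $\mathfrak{d}$-correction because $\pi$-images are bounded sets) gives the conclusion there.

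The non-arational case contains a genuine gap. Your key claim --- that ``the $\pi(T_n)$ lie at uniformly bounded $\FF$-distance from $\pi(T)$ for all large $n$'' and hence ``$\pi(T_n)$ and $\pi(T)$ coarsely coincide in $\FF$ for large $n$'' --- is false in general, and you do not actually need it. Nothing in the topology of $\uos$ forces $\pi(T_n)$ to stay near $\pi(T)$ when $T$ is reducible: a mixed tree $T$ is the boundary limit of points $T_n\in\cv$ whose $\pi$-images escape along a quasi-geodesic ray. Indeed, if that claim were true, the lemma would be a triviality (a bounded perturbation of the base point of a Gromov product changes it by a bounded amount), which it is not. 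Moreover, your deduction via compactness that a subsequence of $\gamma_n$ converges to ``a greedy folding path $\gamma$ starting at $y_0$ with $\gamma(t)\to T$'' overstates what \cite[Lemma~7.3]{br} gives: the limiting folding ray has an endpoint $S\in\partial\cv$ such that either $\pi(S)=\pi(T)$ (arational case) or $S$ and $T$ merely \emph{share a reducing factor}, so $d_{\FF}(\pi(S),\pi(T))\le\mathfrak{d}$. The limit need not converge to $T$ itself in $\uos$.

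The argument you want in the non-arational case runs as follows. Pass to a subsequence so that the greedy folding paths $\gamma_n$ from (the simplex of) $y_0$ toward $T_n$ converge uniformly on compacts to a greedy folding ray $\gamma'$ with limit $S\in\partial\cv$, where $d_\FF(\pi(S),\pi(T))\le\mathfrak{d}$ by \cite[Lemma~7.3]{br}. The hypothesis $(\pi(T_n)\mid\pi(y_1))_{\pi(y_0)}\ge B$ together with $\delta$-hyperbolicity and the uniform quasi-geodesicity of $\pi\circ\gamma_n$ implies that the nearest-point projection of $\pi(y_1)$ onto $\pi\circ\gamma_n$ occurs at parameter-distance roughly $\ge B$ along $\gamma_n$, \emph{uniformly} in $n$. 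Because $\gamma_n\to\gamma'$ on compacts and all these projections fall in a fixed compact window, $\pi\circ\gamma'$ also passes within a bounded distance of that nearest point, whence $(\pi(S)\mid\pi(y_1))_{\pi(y_0)}\ge B-O(\delta,Q,\mathfrak{c})$. The $\mathfrak{d}$-closeness of $\pi(S)$ and $\pi(T)$ then transfers this to $(\pi(T)\mid\pi(y_1))_{\pi(y_0)}\ge B-M$. The estimate flows through the geodesic geometry of the paths, not through any claimed proximity of $\pi(T_n)$ to $\pi(T)$.
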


\section{The bounded geodesic image property for fully irreducible axes}{\label{s:Section}}

\vskip 2pt

In this section, we show that greedy folding axes of fully irreducible outer automorphism have the bounded geodesic image property in $\cv$. Informally, this means that geodesics which are far from the axis have bounded diameter projection to the axis in $\cv$. For honest (symmetric) metric spaces, the bounded geodesic image property is equivalent to the strong contracting property \cite{arzhantseva2015growth} (strong contracting was established in \cite{a08} for axes in $\cv$). However, for the asymmetric metric spaces the contracting property does not appear to directly imply the bounded geodesic image property and additional, situation specific arguments, are required.  Here, we rely on recent work of Dahmani and Horbez \cite{DahmaniHorbez}.

\begin{df}[Bounded geodesic image property]\label{d:Definition}
A greedy folding path $\gamma$ in $\os$ has the \emph{bounded geodesic image property} if there exist constants $C_1$ and $C_2$ so that if points $X, Y \in \cv$ satisfy $d_{sym}(\Pr_\gamma(X), \Pr_\gamma(Y))\ge C_1$ and satisfy $\Pr_\gamma(X)=\gamma(t_1)$, $\Pr_\gamma(Y)=\gamma(t_2)$ for some $t_1<t_2$,  then any geodesic segment $[X,Y]$ between them contains a subsegment $[Z_1,Z_2]$ such that
\[
d_{sym}(Z_1, {\Pr}_{\gamma} (X)) \le C_2 \quad \text{and} \quad d_{sym}(Z_2, {\Pr}_\gamma(Y)) \le C_2.
\]
\end{df}




\subsection{Progressing and contracting geodesics}\label{ss:Progressing}

This subsection contains the technical results we need from Dahmani and Horbez \cite{DahmaniHorbez}. First, the space $\ue$ of \emph{uniquely ergonometric} trees is the subspace of $\partial \cv$ consisting of trees $T$ with $\pi^{-1}(\pi(T)) = T$.

%

\vskip1pt

We begin with a definition from \cite{DahmaniHorbez} (Definition 5.13). While it involves \emph{(full) recurrence}, the actual definition of full recurrence will not be necessary for our use. Please notice that we have slightly reformulated the \cite{DahmaniHorbez} definition using Lemma \ref{dt_4.2}.

\begin{df}[$(C\colon A_0,B_0,C_0)$-progressing]\label{d:progressing}
Suppose that $(T,T')\in\ue\times\ue\backslash\Delta$, that $\gamma \colon\R\to\cv$ is a greedy fold line from $T$ to $T'$, and that $S$ is a point on $\gamma$. Suppose further that $C,A_0,B_0,C_0>0$. Then $\gamma$ is said to be \emph{$(C\colon A_0,B_0,C_0)$-progressing at $S$} if the pair $(\gamma, S)$ satisfies:

If we have
\begin{itemize}
\item a tree $\widetilde S \in \gamma$  to the right of $S$ along $\gamma$ with $d_{\FF}(S,\widetilde S) \le A_0$, and
\item a tree $R \in \cv$ such that $\Pr_\gamma(R)$ is to the right of $S$ and $d_{\FF}(\widetilde Q, \Pr_\gamma(R))\ge B_0$ for any choice of $\widetilde Q$ satisfying the conditions of $\widetilde S$ above, and
\item a greedy folding path $\gamma' \colon [a,b] \to \cv$ from $S'$ to $R$, where $S'$ is either equal to $\widetilde{S}$ or is in the same simplex as $\widetilde{S}$ and is fully recurrent with respect to $R$, and
\item a value $c\in[a,b]$ so that, in the symmetrized metric, $\diam (\gamma'([a,c])\ge 3$.
\end{itemize}
Then $\diam_{\FF}(\pi\circ\gamma'([a,c])\ge C_0$.
\end{df}

Recall that $Q\ge0$ is the constant such that all $\dL$--geodesics in $\cv$ map to unparameterized $Q$--quasigeodesics under the map $\pi \colon \cv \to \FF$. As in the paragraph preceding \cite[Definition 3.3]{DahmaniHorbez}, we fix $\kappa \ge 0$ to be as in the conclusion of Proposition \ref{prop:dh}, where $Q$ is the constant fixed here.

\begin{df}[$(B,D)$-contracting]\label{d:contracting}
Suppose that $\gamma$ is a geodesic in $\cv$ and $S$ is a point on $\gamma$. Then we say that $\gamma$ is \emph{$(B,D)$-contracting at $S$} if the following holds:

Let $a\in \R$ be such that $\gamma(a)=S$ and let
$b:=\inf\{x\in\R\mid \text{diam}_{\FF}(\gamma|_{[a,x]})\ge B\}$.
Then for all geodesics $\gamma'\in\cv$, if $\pi(\gamma')$ crosses $\pi(\gamma|_{[a,b]})$ up to distance $\kappa$,
 then there exists an $a' \in \R$ so that $d_{\cv}(\gamma'(a')), S) \le D$.
\end{df}

We need the following result of Dahmani--Horbez. Informally, it states that progressing implies contracting.

\begin{prop}[\cite{DahmaniHorbez} Proposition 5.17] \label{prop:progress_contract}
There exist constants $\alpha_0,\beta_0,\gamma_0$ so that for each triple $(A_0,B_0,C_0)$ with $A_0 \ge \alpha_0$, $B_0 \ge \beta_0$, and $C_0 \ge \gamma_0$, there exists a $B >0$ satisfying:

For each $C >0$,  there exists a $D>0$ such that for all $(T,T') \in \ue \times \ue \setminus \Delta$, all greedy folding lines $\gamma \colon \R \to \cv$ from $T$ to $T'$, and all $S \in \mathrm{im}(\gamma)$, if $\gamma$ is $(C; A_0, B_0, C_0)$--progressing at $S$, then $\gamma$ is $(B,D)$--contracting at $S$.
\end{prop}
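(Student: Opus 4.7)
The plan is to argue by contrapositive: assuming $\gamma$ is $(C; A_0, B_0, C_0)$-progressing at $S = \gamma(a)$, we show that it must be $(B, D)$-contracting at $S$ for suitable $B$ (depending on $A_0, B_0, C_0$ and the base constants $\kappa, L_\cv, Q, \delta, \mathfrak c, \mathfrak d$) and $D$ (depending additionally on $C$). Suppose, to derive a contradiction, that there is a geodesic $\gamma'$ in $\cv$ whose $\pi$-image crosses $\pi(\gamma|_{[a,b]})$ up to distance $\kappa$, where $b$ is the first time with $\diam_\FF(\gamma|_{[a,b]}) \ge B$, yet every point of $\gamma'$ lies at $\cv$-distance greater than $D$ from $S$.

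First, I would locate a point $R \in \gamma'$ whose coarse $\cv$-projection $\Pr_\gamma(R)$ lies far to the right of $S$ along $\gamma$. Using Lemma \ref{dt_4.2} to convert between $\pi \circ \Pr_\gamma$ and the $\FF$-nearest-point projection $\mathbf{n}_{\pi \circ \gamma}$, together with the crossing hypothesis and the fact that $\pi \circ \gamma$ is an unparameterized $Q$-quasigeodesic, one can arrange that $d_\FF(\widetilde Q, \Pr_\gamma(R)) \ge B_0$ for every $\widetilde Q$ in the simplex of a chosen $\widetilde S \in \gamma$ with $d_\FF(S, \widetilde S) \le A_0$. This is where $B$ must be chosen large: we need $B$ to exceed $A_0 + B_0$ by enough slack to absorb $\kappa$, the hyperbolicity constant $\delta$ of $\FF$, the quasigeodesic constant $Q$, and $\mathfrak c, \mathfrak d$.

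Next, construct a greedy folding path $\gamma'' : [a'', b''] \to \cv$ ending at $R$ and starting at some $S'$ in the same simplex as $\widetilde S$, chosen fully recurrent with respect to $R$; such greedy folding paths always exist. The setup of Definition \ref{d:progressing} is now in place, so the progressing hypothesis gives that every sub-interval $\gamma''|_{[a'', c]}$ of symmetric $\cv$-diameter at least $3$ has $\pi$-image of $\FF$-diameter at least $C_0$. Partition $[a'', b'']$ into consecutive pieces of symmetric diameter exactly $3$; there are at least $\lfloor d_{\mathrm{sym}}(S', R)/3 \rfloor$ of them, and since $\pi \circ \gamma''$ is an unparameterized $Q$-quasigeodesic in $\FF$, the progress along consecutive pieces adds coarsely, yielding
\[
d_\FF(\pi(S'), \pi(R)) \;\ge\; \tfrac{C_0}{Q}\,\big\lfloor d_{\mathrm{sym}}(S', R)/3 \big\rfloor - O(1),
\]
provided $C_0 \ge \gamma_0$ is chosen large enough to dominate the additive slack from quasigeodesicity and thin triangles. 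On the other hand, the crossing hypothesis gives $d_\FF(\pi(R), \pi(S)) \le B + O(\kappa + \delta + \mathfrak c)$, and $d_\FF(\pi(S), \pi(S')) \le A_0 + O(1)$, so
\[
d_\FF(\pi(S'), \pi(R)) \;\le\; B + A_0 + O(\kappa + \delta + \mathfrak c).
\]
Combining these two inequalities yields an a-priori bound on $d_{\mathrm{sym}}(S', R)$, hence on $d_{\mathrm{sym}}(S, R)$, in terms of $B, A_0, C_0$ and the base constants. Choosing $D$ larger than this bound contradicts the assumption that $\gamma'$ avoids the $D$-ball around $S$, since $R \in \gamma'$.

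The main obstacle will be the bookkeeping in the chaining step: one must verify that the $\FF$-progress on consecutive short sub-intervals of $\gamma''$ accumulates coarsely additively rather than cancelling, which requires using hyperbolicity of $\FF$ (Morse property for unparameterized quasigeodesics) and the freedom to take $C_0 \ge \gamma_0$ very large. A secondary technical point is the asymmetric nature of $\dL$: the folding path $\gamma''$ is a directed geodesic from $S'$ to $R$, so it controls $\dL(S', R)$ directly; passing to $d_{\mathrm{sym}}$ uses that $S'$ lies in a bounded thick region near $S$ (the same simplex as $\widetilde S$), via the thick-part Lipschitz comparison between $\dL$ and $d_{\mathrm{sym}}$. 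The role of the parameter $C$ in the progressing definition enters only in determining how $D$ scales with the chosen thickness/length bound, which is why $D$ depends on $C$ while $B$ does not.
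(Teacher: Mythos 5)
The paper does not actually give a proof of this proposition: it is quoted verbatim as Proposition 5.17 of Dahmani--Horbez and used as a black box, so there is no internal argument to compare your attempt against. On its own merits, however, your proposal has a genuine gap in the chaining step. The $(C;A_0,B_0,C_0)$-progressing hypothesis (Definition \ref{d:progressing}) only constrains \emph{initial} segments of the auxiliary greedy folding path $\gamma''$: the conclusion is $\diam_\FF(\pi\circ\gamma''([a'',c]))\ge C_0$ whenever the initial segment $\gamma''|_{[a'',c]}$ has symmetric diameter at least $3$. These inequalities are nested, so collectively they give exactly one lower bound $\diam_\FF(\pi\circ\gamma'')\ge C_0$, not the accumulation over $\lfloor d_{\mathrm{sym}}(S',R)/3\rfloor$ consecutive pieces that you assert. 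To obtain a $C_0$-drop on the $i$th piece $[a_{i-1},a_i]$ you would have to invoke the progressing property based at $\gamma''(a_{i-1})$, but the hypothesis is only supplied for base points $S\in\mathrm{im}(\gamma)$, and $\gamma''(a_{i-1})$ does not lie on $\gamma$ once $\gamma''$ has folded away from the simplex of $S'$.

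Consequently the displayed inequality
\[
d_\FF(\pi(S'),\pi(R))\;\ge\;\tfrac{C_0}{Q}\,\big\lfloor d_{\mathrm{sym}}(S',R)/3\big\rfloor - O(1)
\]
does not follow, and it cannot hold in general: such a lower bound would make $\pi$ coarsely quasi-isometrically embed folding paths into $\FF$, which fails (folding through thin parts of $\cv$ can cover large symmetric distance while $\pi$ moves a bounded amount). The downstream bound on $d_{\mathrm{sym}}(S,R)$ is also suspect on geometric grounds, since $R$ was chosen precisely so that $\Pr_\gamma(R)$ lies far to the right of $S$; a contracting/BGIP axis $\gamma$ would then force $d_{\mathrm{sym}}(S,R)$ to be comparably large, not small. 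The missing idea is to use the progressing hypothesis to control the segment of $\gamma'$ whose $\pi$-image lies near $\pi(S)$ (the point that is supposed to come within $D$ of $S$), rather than the far endpoint $R$ itself.
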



\vskip5pt

\subsection{The bounded geodesic image property for fully irreducible axes}{\label{ss:BGIPforFIs}}

In this section we will prove Theorem~\ref{t:BGIP}, establishing that a greedy folding axis in $\cv$ of a fully irreducible element of $\out$ has the bounded geodesic image property.


We will need the following lemma, Lemma \ref{l:progressing}. This lemma could be ascertained from a combination of results in \cite{DahmaniHorbez}, but we give a more direct proof in our circumstance here.


\begin{lem}\label{l:progressing}
Suppose $\gamma$ is a greedy fold axis of a fully irreducible element of $\out$. Fix $A_0,B_0,C_0 \ge 0$ sufficiently large. Then for each $S \in \gamma$ there is a $C\ge 0$ such that $\gamma$ is $(C:A_0,B_0,C_0)$ progressing at $S$.
\end{lem}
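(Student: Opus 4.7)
The plan is to argue by contradiction, using three properties specific to the axis $\gamma$ of a fully irreducible $\varphi \in \out$: (i) $\varphi$-equivariance of $\gamma$, which by compactness of a fundamental segment will reduce the lemma to finding constants uniform over $S$ in one such segment; (ii) Bestvina--Feighn's theorem that greedy fold paths project under $\pi$ to unparameterized $Q$-quasi-geodesics in the hyperbolic space $\FF$, and in particular that $\pi \circ \gamma$ is a uniform $Q$-quasi-axis of $\varphi$; and (iii) Algom-Kfir's strong contraction property for $\gamma$ in $(\cv, \ds)$, together with the fact that $\gamma$ lies in a uniformly thick part of Outer space.

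First I would fix $S$ on $\gamma$ and suppose for contradiction that for each choice of $C$ there is a quadruple $(R, \widetilde{S}, S', \gamma', c)$ satisfying the four bullets of Definition~\ref{d:progressing} but with $\diam_{\FF}(\pi \circ \gamma'([a,c])) < C_0$. Passing to sequences $(R_n, \widetilde{S}_n, S'_n, \gamma'_n, c_n)$, I would extract coarse $\FF$-position data: by the first bullet together with the fact that $\pi$ is constant on simplices, $\pi(S'_n) = \pi(\widetilde{S}_n)$ lies within $A_0$ of $\pi(S)$ in $\FF$; by the second bullet together with Lemma~\ref{dt_4.2}, the nearest-point projection of $\pi(R_n)$ onto $\pi \circ \gamma$ lies at $\FF$-distance at least $B_0 - \mathfrak{c}$ to the right of $\pi(S)$. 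Since $\pi \circ \gamma'_n$ is an unparameterized $Q$-quasi-geodesic from $\pi(S'_n)$ to $\pi(R_n)$ in a $\delta$-hyperbolic space, once $A_0$ and $B_0$ are chosen large relative to $\delta$, $Q$, and $\mathfrak{c}$, hyperbolicity forces its trace to fellow-travel $\pi \circ \gamma$ from near $\pi(S)$ past a controlled $\FF$-neighborhood of $\pi(\Pr_\gamma(R_n))$.

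The next step is to convert the symmetric-diameter bound $\diam_{sym}(\gamma'_n([a_n,c_n])) \ge 3$ into a quantitative lower bound on $\diam_\FF(\pi \circ \gamma'_n([a_n,c_n]))$. Because $\gamma$ is $\varphi$-periodic it lies in a fixed thick part of $\cv$, where $\dL$ and $\ds$ are uniformly comparable; hence a symmetric motion of at least $3$ from $S'_n$ forces the greedy fold path $\gamma'_n$ to exit a uniform Lipschitz-neighborhood of its initial simplex and undergo at least one macroscopic fold prior to time $c_n$. Combined with Algom-Kfir's strong contraction for $\gamma$, which coarsely freezes $\Pr_\gamma$ as soon as $\gamma'_n$ has left a bounded $\cv$-neighborhood of $\gamma$, and with the $Q$-quasi-axis property of $\pi \circ \gamma$, the initial segment $\pi \circ \gamma'_n([a_n,c_n])$ must cover $\FF$-distance bounded below by a function of the parameters that grows to infinity with $A_0, B_0$. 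Taking $A_0, B_0$ sufficiently large in terms of $C_0$ then yields $\diam_\FF(\pi\circ\gamma'_n([a_n,c_n])) \ge C_0$, the desired contradiction.

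The hard part will be the quantitative step converting symmetric motion in $\cv$ to directed $\FF$-motion: because $\pi$ factors through the simplicial structure and $\pi \circ \gamma'_n$ is only an \emph{unparameterized} quasi-geodesic, a greedy fold path could in principle accumulate a lot of symmetric motion while its $\pi$-image stalls on a bounded $\FF$-interval. Ruling out prolonged stalling requires combining the thick-part comparability of $\dL$ with $\ds$ along $\gamma$, Algom-Kfir's contraction bound, and the coarsely Lipschitz constant $L_\cv$ from Definition~\ref{d:Projection}, so as to translate a macroscopic simplex-crossing of $\gamma'_n$ into a definite amount of $\FF$-progress. Once this quantitative lower bound is in hand, $\varphi$-equivariance and the compactness of a fundamental segment of $\gamma$ immediately upgrade the constant $C = C(S)$ to one uniform over all of $\gamma$, completing the proof.
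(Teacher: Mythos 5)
Your proposal takes a fundamentally different route from the paper's proof, and it contains a genuine gap.

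The paper's argument is a compactness/limiting argument. Assuming for contradiction a sequence of counterexamples $(\widetilde S_n, R_n, \gamma_n')$ whose initial segments $\sigma_n$ have symmetric diameter at least $n$ (growing to infinity) but $\diam_{\FF}(\sigma_n)\le C_0$, the paper passes to subsequential limits $S_n'\to S'$, $R_n\to T$, $\gamma_n'\to\gamma'$. It first shows $S'\in\cv$ (as opposed to in $\partial\cv$) via \cite[Lemma 7.3]{br} combined with the Gromov-product estimate Lemma~\ref{DH_5.11}; then, splitting on whether the limiting tree $T$ is arational, it produces a point $z$ on the limiting ray $\gamma'$ whose $\Pr_\gamma$-projection lies at $\FF$-distance $\ge C_0+10L_{\cv}$ from $S$; and finally it approximates $z$ by points $z_n\in\sigma_n$ to contradict $\diam_{\FF}(\sigma_n)\le C_0$. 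None of these ingredients --- the limits of fold paths, the dichotomy on arationality, or Lemma~\ref{DH_5.11} --- appear in your proposal.

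Instead you attempt a direct, quantitative conversion of symmetric $\cv$-motion into $\FF$-motion. You yourself flag this as ``the hard part'' and correctly observe that a greedy fold path can accumulate large symmetric motion while its $\pi$-image stalls. That stalling is exactly the phenomenon the compactness argument is built to defeat, and the tools you invoke to rule it out do not do the job: once $\gamma_n'$ leaves a bounded $\ds$-neighborhood of $\gamma$, neither the thickness of $\gamma$ nor strong contraction onto $\gamma$ constrains how fast $\pi\circ\gamma_n'$ moves, and the fibers of $\pi$ are unbounded in $(\cv,\ds)$, so there is no uniform rate converting off-axis symmetric displacement into $\FF$-progress. There is also a quantifier mismatch in your set-up: you keep the symmetric-diameter threshold fixed at $3$ and then try to force the conclusion by sending $A_0,B_0\to\infty$, but $A_0,B_0,C_0$ are fixed in the statement; the role of the free parameter $C$ in the lemma is precisely as the symmetric-diameter threshold, and the paper's contradiction sequence takes this threshold $\to\infty$ (not $A_0$ or $B_0$). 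As written, your argument does not produce the required contradiction.
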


\begin{proof}[Proof of Lemma \ref{l:progressing}]
Suppose for the sake of contradiction that we have:
\begin{enumerate}
\item a sequence of $\widetilde S_n \in \gamma$ each lying to the right of $S$ along $\gamma$ and satisfying $d_{\FF}(S,\widetilde S_n) \le A_0$,
\item a sequence $R_n \in \cv$ such that $\Pr_\gamma(R_n)$ is to the right of $\widetilde S_n$ and $d_{\FF}(\widetilde S_n, \Pr_\gamma(R_n))\ge B_0$,
\item a sequence of greedy fold paths $\gamma_n' \colon [a,b] \to \cv$ from $S_n'$ to $R_n$, where $S_n'$ is either equal to $\widetilde{S}_n$ or is in the same simplex as $\widetilde{S}_n$ and is fully recurrent with respect to $R_n$
\end{enumerate}
so that the greedy fold paths $\gamma_n'$ further satisfy: if $\sigma_n$ is the smallest initial segment $\gamma_n'$ whose endpoints have symmetric distance at least $n$, then $\diam_{\FF}(\sigma_n) \le C_0$.

Note that $\diam_{\cv} (\sigma_n) \to \infty$ as $n \to \infty$, even though their projections to $\FF$ remain bounded.

We pass to a subsequence so that the $\{S_n'\}$ converge in $\ol\cv$ to an $F_r$-tree $S'$. As in \cite[Lemma 5.20]{DahmaniHorbez}, we prove:

\begin{cl}\label{claim1} $S'\in\cv$ (as opposed to $\in\partial\cv$).
\end{cl}
\begin{proof}[Proof of Claim]
Observe that the $\widetilde S_n$ vary in a compact subset of $\gamma$ since $\gamma$ projects to a quasigeodesic in $\FF$. Hence, we can pass to a subsequence so that they are all in the same simplex -- hence so are the $S'_n$. Now if $S'\in\partial \cv$, then by \cite[Lemma 7.3]{br} each element of $F_r$ acting elliptically on $S'$ also acts elliptically on $R$. Since the trees $S_n'$ live in the same open simplex in $\cv$, this means that $S'$ has a primitive elliptic element $a$, where $a$ comes from a closed loop in the underlying graph of the open simplex containing all the $S_n'$. (In particular, it is in the subgraph whose edge-lengths shrink to zero as $n\to\infty$.)
Then $\langle a\rangle$ is a common reducing factor for the trees $R$ and $S'$ and, by the definition of the map $\pi$, we have that, coarsely speaking, $\pi(R)= \pi(S)= \langle a\rangle$. The choice of $a$ also implies that $\pi(S_n')=\langle a\rangle$.

Let $M$ be the constant provided by Lemma \ref{DH_5.11}. Define $C$ to be the constant replacing the $2\delta$ in Equation \ref{e:2delta} if $[x,y]$ were instead a quasi-geodesic. And let $\mathfrak{c}$ be the constant of Lemma \ref{dt_4.2}. Assume now that $B_0\ge 10M+C+\mathfrak{c}.$
We reach a contradiction by applying Lemma \ref{DH_5.11} as follows.

Since all $S_n'$ live in the same open simplex, we have $\pi(S_n')=\pi(S_1')$ for all $n\ge 1$. So we insert into Lemma \ref{DH_5.11}: $y_0=S_1'\in \cv$, and $R_n=T_n$, and $y_1=\gamma(t_0)$ for some sufficiently large $t_0$. Namely, we choose $t_0$ large enough so that $\pi(\gamma(t_0))$ is further than $10M$ to the right of $\pi(S_1')$ along $\pi(\gamma)$.

Recall that, by assumption, in the factor graph $\FF$, we have that $R_n$ projects farther than $B_0$ to the right of $S_1'$ along $\pi(\gamma)$. By Equation \ref{e:2delta} and Lemma \ref{dt_4.2} this implies that
$$(\pi(R_n)\mid \pi(\gamma(t_0)))_{\pi(S'_1)} \ge B_0-C-\mathfrak{c}.$$
Lemma \ref{DH_5.11} now implies that
$$(\pi(R) \mid \pi \circ \gamma(t_0)))_{\pi(S'_1)} \ge B_0 - M - C - \mathfrak{c} \ge 9M.$$
This contradicts the fact that $\pi(R)=\pi(S)=\pi(S_1')=\langle a\rangle$.

Hence, $S' \in \cv$, as desired.
\end{proof}

Notice that, for sufficiently large $n$, we have that $\widetilde{S_n}$, $S_n'$, and $S'$ are all in the same simplex, so $\pi(\widetilde{S_n})=\pi(S_n')=\pi(S')$. Thus, Item 1 at the start of the proof, together with \cite[Lemma 3.1]{bf11}, implies $d_{\FF}(S, S') \le A_0 + 4$.

\vskip5pt

After passing to a subsequence, we see that $R_n\to T$ in $\uos$ for some $T \in \partial CV$.
Applying \cite[Lemma 7.3]{br}, we have that after passing to a further subsequence certain initial segments of the $\gamma_n'$ converge uniformly on compact sets to a folding ray $\gamma'$. The folding ray $\gamma'$ has a limit $S \in \partial \cv$ such that $(1)$ if $T$ is arational, then $\pi(S) = \pi(T) \in \partial \FF$ and $(2)$ if $T$ is not arational, then $\pi(T)$ and $\pi(S)$ are uniformly close in $\FF$, since $T$ and $S$ share a reducing factor. In particular, $d_\FF(\pi(T),\pi(S)) \le \mathfrak{d}$, for $\mathfrak{d}$ as defined in Equation \ref{e:mathfrakd}.

\begin{cl}\label{claim2} There exists a point $z$ on $\gamma'$ such that $d_{\FF}(S, \Pr_\gamma(z))\ge C_0 + 10L_{CV}$, where $L_{CV}$ is the Lipschitz constant for $\pi\from \cv \to \FF$.
\end{cl}
\begin{proof}[Proof of Claim]
We start by assuming that $B_0>C_0 + 10K + M + 3C + 2\mathfrak{c} +\mathfrak{d}$, where still $M$ is the constant provided by Lemma \ref{DH_5.11}, and $\delta$ is the hyperbolicity constant of $\FF$, and $C$ adjusts $2\delta$ for our using quasi-geodesics, $\mathfrak{c}$ is the constant of Lemma \ref{dt_4.2}, and the definition of $\mathfrak{d}$ is as in Equation \ref{e:mathfrakd}. Let $y_0=\pi(S)=\pi(S_1')=\pi(S_n')$.

By Item 2 at the start of the proof we know $d_{\FF}(\widetilde{S_n},\Pr_{\gamma}(R_n))\ge B_0$ and thus, by Lemma \ref{dt_4.2},
$$(\pi(\widetilde{S_n}) \mid \mathbf{n}_{\pi\circ\gamma}(\pi(R_n)))_{y_0}\ge B_0-\mathfrak{c}-C.$$
Therefore, Lemma \ref{DH_5.11} implies that, for $t_0$ sufficiently large,
\begin{equation}\label{e:cl2}
(\pi(T) \mid \pi(\gamma(t_0)))_{y_0}\ge B_0-M-C-\mathfrak{c}.
\end{equation}

We consider separately two cases:
\[\text{ (1) } T \text{ is arational and (2) } T \text{ is not arational.}\]

Suppose first that we are in Case 1, i.e. that $T$ is arational, so that $\pi(T) = \pi(S)$. Then, by \cite{br}, we know that $\pi(\gamma'(t))$ converges to the point $\pi(T)$ in $\partial\FF$ as $t\to +\infty$.
Since for ideal triangles in $\delta$-hyperbolic spaces, the Gromov product converges in a lim sup sense, we have that, for any point $\pi(z)$ on $\pi(\gamma')$ sufficiently far away from the point $y_0$:
$$(\pi(z)\mid \pi(\gamma(t_0)))_{y_0}\geq B_0-M-2C-\mathfrak{c}.$$
Equation \ref{e:2delta} then says
$$d_{\FF}(y_0,\mathbf{n}_{\pi\circ\gamma}(\pi(z)))\geq B_0-M-3C-\mathfrak{c}.$$
This in turn implies that
$$d_{\FF}(y_0,{\Pr}_{\gamma}(z))\geq B_0-M-3C-2\mathfrak{c}\geq C_0+10L_{CV}.$$
Thus the claim is satisfied using any such $z$.

We now suppose that we are in Case 2, i.e. that $T$ is not arational, so that $d_\FF(\pi(T),\pi(S)) \le \mathfrak{d}$. By the second bullet point of Definition \ref{d:Projection},
we have for all $t$ sufficiently large:
$$d_{\FF}(\pi(S), \pi(\gamma'(t)))\leq \eta,$$
for some uniform constant $\eta>0$ independent of $\gamma,\gamma'$, or $T$ (see Definition \ref{d:Projection}).
Hence, for large $t$
$$d_{\FF}(\pi(T), \pi(\gamma'(t)))\leq \eta +\mathfrak{d}.$$
By taking $t$ large enough and setting $z=\gamma'(t)$, by Equation \ref{e:cl2}, we have
$$(\pi(z) \mid \pi(\gamma(t_0)))_{y_0}\geq B_0-M-2C-\mathfrak{c} - \mathfrak{d}.$$
Using a similar argument as in Case 1, we now also have that the claim is satisfied in this case with our $z=\gamma'(t)$, where $t$ is sufficiently large.
\end{proof}

We now complete the proof as in \cite[Lemma 5.21]{DahmaniHorbez}. Recall that certain initial subsegments of the $\gamma_n'$ converge uniformly on compact sets to $\gamma$. So we can choose a sequence $\{z_n\}$ in $\cv$ converging to the point $z$ of Claim \ref{claim2} and satisfying that $z_n\in\gamma_n'$ for each $n$. Now take $n$ sufficiently large so that both $d_{sym}(S_n',S')<\frac{1}{2}$ and $d_{sym}(z_n,z)<\frac{1}{2}$. Claim \ref{claim2} implies that $d_{\FF}(S, \Pr_\gamma(z_n))\ge C_0 + 3L_{CV}$.
Observe that, since $d_{sym}(z_n,z)<\frac{1}{2}$ and $S'$ is fixed, $d_{sym}(z_n,S')$ is bounded also, and hence $d_{sym}(z_n,S_n')$ is bounded. This implies that eventually the $z_n$ lie on the segments $\sigma_n$. However, $\diam_{\FF}(\sigma_n)\le C_0$ for all $n$ and $y_0 \subset\pi(\sigma_n)$ for all $n$, so that $d_{\FF}(S, \Pr_\gamma(z_n))\le C_0$. This contradicts that $d_{\FF}(S, \Pr_\gamma(z_n))\ge C_0 + 3L_{CV}$.
\end{proof}

\begin{thm}\label{t:BGIP}
Let $\gamma$ be a greedy folding axis in $\cv$ of a fully irreducible outer automorphism $\vphi\in\out$. Then $\gamma$ has the bounded geodesic image property.
\end{thm}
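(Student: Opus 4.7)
The plan is to combine the progressing property just established in Lemma~\ref{l:progressing} with Proposition~\ref{prop:progress_contract} (Dahmani--Horbez) to upgrade $\gamma$ to a uniformly contracting axis, and then bootstrap the bounded geodesic image property from the $Q$--quasigeodesic behavior of $\pi\circ[X,Y]$ in the hyperbolic space $\FF$. Concretely, applying Lemma~\ref{l:progressing} with sufficiently large $A_0,B_0,C_0$ and then Proposition~\ref{prop:progress_contract}, I obtain constants $B, D > 0$ such that $\gamma$ is $(B,D)$--contracting at every point of $\gamma$. Two ambient facts I shall exploit: first, since $\gamma$ is the greedy folding axis of a fully irreducible, its image stays in some thick part of $\cv$, so by \cite[Theorem 24]{ab12} there is a constant $c_0 \ge 1$ with $\ds(\gamma(s), \gamma(s')) \le c_0 (s' - s)$ for all $s \le s'$, i.e.\ $\ds$ and $\dL$ are comparable along $\gamma$; second, by Bestvina--Feighn the map $\pi \circ \gamma$ is a $Q$--quasigeodesic in $\FF$.

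Next, let $X, Y \in \cv$ and set $S_i = \Pr_\gamma(X_i) = \gamma(t_i)$ (with $X_1 = X$, $X_2 = Y$ and $t_1 < t_2$), and suppose $\ds(S_1, S_2) \ge C_1$ for a constant $C_1$ to be chosen. The $\ds$--$\dL$ comparability together with the quasigeodesic property forces $d_\FF(\pi(S_1), \pi(S_2))$ to be large once $C_1$ is large. Fix any $\dL$--geodesic $[X,Y]$; then $\pi\circ [X,Y]$ is a $Q$--quasigeodesic in $\FF$, and by Lemma~\ref{dt_4.2} its $\mathbf{n}_{\pi\circ\gamma}$--projections of the endpoints lie within $\mathfrak{c}$ of $\pi(S_1)$ and $\pi(S_2)$. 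Standard Morse fellow-traveling for $Q$--quasigeodesics in a $\delta$--hyperbolic space then yields an absolute margin $M > 0$ (depending only on $\delta, Q, \mathfrak{c}, \kappa$) so that, after discarding $\FF$--end-margins of size $M$, the $\pi$--image of $[X,Y]$ fellow-travels $\pi\gamma|_{[t_1 + M, t_2 - M]}$ within distance $\kappa$ in an orientation-preserving fashion; in particular it \emph{crosses} every $\pi$--length--$B$ subsegment of this interior up to distance $\kappa$ in the sense required by Definition~\ref{d:contracting}.

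Let $B', M_\gamma$ be the $\gamma$--parameter lengths corresponding to $\FF$--lengths $B$ and $M$, which are bounded via the quasigeodesic relation. Choose $C_1$ large enough that $t_1 + M_\gamma < t_2 - M_\gamma - B'$, and set $S_1^+ = \gamma(t_1 + M_\gamma)$ and $S_2^- = \gamma(t_2 - M_\gamma - B')$; by construction the $B$--segments of $\gamma$ starting at $S_1^+$ and $S_2^-$ both lie in the interior range and are crossed by $\pi\circ[X,Y]$. Applying the $(B,D)$--contracting property at each of $S_1^+$ and $S_2^-$ produces points $Z_1, Z_2 \in [X,Y]$ with $\ds(Z_1, S_1^+) \le D$ and $\ds(Z_2, S_2^-) \le D$. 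The thickness comparability then gives $\ds(Z_i, S_i) \le D + c_0(M_\gamma + B') =: C_2$, and the monotonicity of the crossing maps $\theta$ arranges $Z_1$ to precede $Z_2$ along $[X,Y]$, completing the verification of Definition~\ref{d:Definition}.

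The main obstacle is bookkeeping: one has to match the hyperbolic Morse fellow-traveling constant (governed by $\delta, Q, \mathfrak{c}$) with the \emph{specific} $\kappa$ that is fixed for the contracting definition via Proposition~\ref{prop:dh}, and choose the margins $M, M_\gamma, B'$ simultaneously so that (i) the required crossing hypotheses of $(B,D)$--contracting are actually satisfied for both $S_1^+$ and $S_2^-$, and (ii) the $\ds$--error between $S_i^\pm$ and $S_i$ remains uniformly bounded, so that all relevant constants can be absorbed into a single pair $(C_1, C_2)$ depending only on the rank and on $\vphi$ through $\gamma$.
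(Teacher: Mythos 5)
The high-level plan here is the same as the paper's: use Lemma~\ref{l:progressing} plus Proposition~\ref{prop:progress_contract} to get contraction, then bootstrap the bounded geodesic image property from the $Q$--quasigeodesic behaviour of $\pi\circ[X,Y]$ in $\FF$. But there is a genuine gap in the very first step. You claim to ``obtain constants $B, D > 0$ such that $\gamma$ is $(B,D)$--contracting at \emph{every point} of $\gamma$.'' Neither cited result supports uniformity over all of $\gamma$: Lemma~\ref{l:progressing} produces, \emph{for each fixed} $S$, a progressing constant $C = C(S)$ (via the contradiction/compactness argument, which is done for one $S$ at a time), and Proposition~\ref{prop:progress_contract} only asserts that, once $C$ is fixed, there is a $D$ depending on $C$; the constant $B$ is uniform in $(A_0,B_0,C_0)$, but $D$ is not uniform in $S$. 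Since you then apply the $(B,D)$--contracting property at the points $S_1^+=\gamma(t_1+M_\gamma)$ and $S_2^-=\gamma(t_2-M_\gamma-B')$, which move with $X$ and $Y$, there is no guarantee that a single $D$ works for all such points. Making $D$ uniform along $\gamma$ would require an additional compactness/continuity argument for the progressing constant on a fundamental domain which you do not supply.

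The paper sidesteps this entirely by exploiting the $\vphi$--periodicity of $\gamma$: it fixes a single $S$, obtains $(B_1,D_1)$--contracting at that $S$, notes that by equivariance the same $(B_1,D_1)$ works at every translate $\vphi^i(S)$, and then chooses the contraction anchors $S_x$, $S_y$ to be $\vphi$--translates of the fixed $S$ nearest the relevant projections (possibly after replacing $\vphi$ by a power so that $d_\FF(S,\vphi(S))\ge B_1$). This is the crucial device your proof is missing. If you replace your $S_1^+, S_2^-$ with the nearest $\vphi$--translates of a fixed $S$ and absorb the resulting bounded offsets (which are controlled by $\ds(S,\vphi(S))$) into $C_1$ and $C_2$, the argument closes and coincides in substance with the paper's; as written, the step asserting uniform $(B,D)$--contraction is unjustified.
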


\begin{proof}
By combining Lemma \ref{l:progressing} with Proposition \ref{prop:progress_contract}, we see that for each $S$ in the image of $\gamma$ there exist constants $B_1,D_1 \ge 0$ so that $\gamma$ will be $(B_1, D_1)$-contracting at $S$. It follows that $\gamma$ is $(B_1, D_1)$-contracting at each translate $\vphi^i(S)$.
If necessary, replace $\varphi$ with a positive power so that $d_\FF(S,\varphi(S)) \ge B_1$.

First, we refer the reader back to \S \ref{sec:ff} for properties of $\FF$ and add the following: In a $\delta$-hyperbolic metric space $X$, every quasigeodesic has the bounded geodesic image property. More precisely, for any $\delta\ge 0$ and $Q \ge 1 $ there exist constants $E,E_1>0$ with the following property. For any $x,y$ in a $\delta$-hyperbolic metric space $X$ and a $Q$-quasigeodesic $\alpha$ in $X$, if $\mathbf{n}_\alpha(y)$ occurs at least distance $E$ to the right of $\mathbf{n}_\alpha(x)$ along $\alpha$, then any $Q$--quasigeodesic $[x,y]$ contains a subsegment $[x',y']$ such that
$d(x',\mathbf{n}_\alpha(x))\le E_1$, and $d(y',\mathbf{n}_\alpha(y))\le E_1$, and $[x',y']$ has Hausdorff distance at most $E_1$ from $[\mathbf{n}_\alpha(x)), \mathbf{n}_\alpha(y))]$.

We set $l = \max\{\ds(S, \vphi (S)), d_{\FF}(S, \vphi(S)) \}$ and recall that $\mathfrak{c}$ is the constant of Lemma \ref{dt_4.2}. Note that $l \ge B_1$.

Now define $C_1 = K(10Kl + 2 \mathfrak{c} + E + K)$ and suppose that we have $X,Y\in CV$ such that  $\ds(\Pr_\gamma(X), \Pr_\gamma(Y)) \ge C_1$ with the projection of $Y$ to $\gamma$ occurring to the right of the projection of $X$ along the orientation of $\gamma$. Let $[X,Y]$ be any geodesic from $X$ to $Y$ in $\cv$.

We know that $\pi(\gamma)$ is a $K$-parameterized quasigeodesic in $\FF$, and that $\pi(\Pr_\gamma(X))$ and $\pi(\Pr_\gamma(Y))$ are within $\mathfrak{c}$ of the nearest point projections of $\pi(X)$ and $\pi(Y)$ to $\pi(\gamma)$. By the choice of $C_1$, this implies that $$d_{\FF}(\pi(\Pr_\gamma(X)), \pi(\Pr_\gamma(Y)))\ge 10Kl +E +2\mathfrak{c}$$
and so
$$d_{\FF}(\mathbf{n}_{\pi(\gamma)}(\pi(X)), \mathbf{n}_{\pi(\gamma)}(\pi(Y))) \ge 10Kl + E.$$


By the bounded geodesic image property for $\pi(\gamma)$, there exists a subsegment $[x',y']$ of the (unparameterized) $Q$--quasigeodesic $\pi([X,Y])$ in $\FF$ such that
\[
d(x', \mathbf{n}_{\pi(\gamma)}(\pi(X))\le E_1 \text{ and } d(y', \mathbf{n}_{\pi(\gamma)}(\pi(Y))\le E_1.
\]


We take $\pi(S_x)$ to be the translate $\vphi^i(\pi(S))$ occurring immediately to the right of $\mathbf{n}_{\pi(\gamma)}(\pi(X))$ along $\pi(\gamma)$. Note that $d_\FF(\pi(S_x) , \mathbf{n}_{\pi(\gamma)}(\pi(X))) \le l K$. Similarly, we let $\pi(S_y)$ be the translate  $\vphi^i(\pi(S))$ which is the \emph{second to last} translate of $\pi(S)$ before $\mathbf{n}_{\pi(\gamma)}(\pi(Y))$ along $\pi(\gamma)$. Then $d_\FF(\pi(S_y) , \mathbf{n}_{\pi(\gamma)}(\pi(Y))) \le 2 lK$. Our selection of $S_x$ and $S_y$ guarantees that we may apply the bounded geodesic image property at both of these points.


We first apply the $(B_1,D_1)$-contraction property for $S_x$, using the segment of $[x',y']$ that crosses a subsegment of $\pi(\gamma)$ starting at $\pi(S_x)$ with length $B_1$, and find a point $Z_1$ on the corresponding segment $J_1$ of $[X,Y]$ such that $\ds(Z_1,S_x)\le D_1$. Then we apply the $(B_1,D_1)$-contraction property for $S_y$, using the segment of $[x',y']$ that crosses a subsegment of $\pi(\gamma)$ starting at $\pi(S_y)$ with length $B_1$, and find a point $Z_2$ on the corresponding segment $J_2$ of $[X,Y]$ such that $\ds(Z_2,S_y)\le D_1$. By construction, $J_1$ occurs before $J_2$ in $[X,Y]$ and therefore $Z_1$ comes before $Z_2$ along $[X,Y]$.   Since $\gamma$ is thick, there is a $C_2$ such that $\ds(Z_1,S_x)$ and $\ds(Z_2, S_y)$ are both less then $C_2$.
\end{proof}

\begin{corollary}\label{cor:BGI}
Let $\gamma$ be a greedy folding axis in $\cv$ of some fully irreducible $\vphi\in \out$. Then there exist constants $\rho\ge 0$ and $c_0\ge 0$ with the following property.
Let $C_1,C_2>0$ be the bounded geodesic image property constants for $\gamma$ (where we know that this property holds for $\gamma$ by Theorem \ref{t:BGIP}). Let $R\ge 1$ be an arbitrary sufficiently large number.

Let $X,Y\in \cv$ be such that $\ds(\Pr_\gamma(X), \Pr_\gamma(Y))\ge C_1$ and such that  $\Pr_\gamma(X)=\gamma(t_1)$, $\Pr_\gamma(Y)=\gamma(t_2)$ where $t_2-t_1\ge R$.  Then any geodesic segment $[X,Y]$ contains a subsegment of length $\ge R-c_0$ that $\rho$-fellow travels a subsegment of the same length in $\gamma$.
\end{corollary}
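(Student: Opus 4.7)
The plan is to apply Theorem~\ref{t:BGIP} to $X$ and $Y$, then upgrade the two endpoint bounds it provides into a bound on the whole subsegment, and finally invoke Lemma~\ref{l:fell} to extract the desired fellow-traveling subsegment. First, Theorem~\ref{t:BGIP} applied to our $X, Y$ produces $Z_1, Z_2 \in [X,Y]$ with $\ds(Z_1, \gamma(t_1)) \le C_2$ and $\ds(Z_2, \gamma(t_2)) \le C_2$; additivity of $\dL$ along the geodesic $[X,Y]$ together with the (directed) triangle inequality yields $|\dL(Z_1, Z_2) - (t_2 - t_1)| \le 2C_2$, so in particular the subsegment $[Z_1, Z_2]$ has $\dL$-length $L \ge R - 2C_2$.

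The heart of the argument is showing that the entire subsegment $[Z_1, Z_2]$ lies in a uniform $\ds$-neighborhood of $\gamma$. Fix $W \in [Z_1, Z_2]$ and let $\gamma(s) = \Pr_\gamma(W)$. When $s$ is buffered away from $t_1$ and $t_2$ by enough that $\ds(\gamma(t_1), \gamma(s)) \ge C_1$ and $\ds(\gamma(s), \gamma(t_2)) \ge C_1$, apply Theorem~\ref{t:BGIP} separately to the subsegments $[X, W]$ and $[W, Y]$ of $[X,Y]$ (each of which is itself a $\dL$-geodesic), producing points $W' \in [X, W]$ and $W'' \in [W, Y]$ each within $C_2$ in $\ds$ of $\gamma(s)$. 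Because $W', W, W''$ lie in this order along the $\dL$-geodesic $[X,Y]$, additivity gives $\dL(W', W) + \dL(W, W'') = \dL(W', W'') \le \ds(W', \gamma(s)) + \ds(\gamma(s), W'') \le 2 C_2$; combining the resulting bounds $\dL(W', W), \dL(W, W'') \le 2C_2$ with the $\ds$-bounds on $W', W''$ yields $\ds(W, \gamma(s)) \le 6 C_2$. For $W$ whose projection falls in one of the two short boundary regions near $\gamma(t_1)$ or $\gamma(t_2)$, the length along $[X,Y]$ between $W$ and $Z_1$ (respectively $Z_2$) is controlled by this same boundary window, so a direct comparison shows $W$ itself lies within bounded $\ds$-distance of $\gamma$, producing a uniform bound $\ds(W, \gamma) \le C_3$ for some $C_3$ depending only on $C_1$, $C_2$, and the thickness of $\gamma$.

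Finally, reparametrize $[Z_1, Z_2]$ as a $\dL$-geodesic $\mL'\colon [0, L] \to \cv$ by arc length and apply Lemma~\ref{l:fell} with $\mL = \gamma$, $t_1' = 0$, $t_2' = L$, and $\veps = C_3$; the axis $\gamma$ is contained in the thick part of $\cv$ since it is periodic for a fully irreducible outer automorphism. The endpoint conditions follow from Step~1 and the neighborhood condition $\mL'([0,L]) \subset N(\gamma, \veps)$ is exactly Step~2, so the lemma delivers a subsegment of $[Z_1, Z_2]$ of length at least $L - c\veps \ge R - 2C_2 - c\veps$ that $c\veps$-fellow travels a subsegment of $\gamma$ of the same length. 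Setting $\rho := c\veps$ and $c_0 := 2C_2 + c\veps$ gives the conclusion.

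The main obstacle is the boundary case in the middle step, where $\Pr_\gamma(W)$ lies outside the window in which Theorem~\ref{t:BGIP} can be simultaneously applied to both $(X, W)$ and $(W, Y)$. In particular, one must rule out $\Pr_\gamma(W)$ drifting far outside the interval $[\gamma(t_1), \gamma(t_2)]$, which is handled via the coarse monotonicity of $\Pr_\gamma$ along $[X,Y]$ coming from $\pi\colon \cv \to \FF$ being coarsely Lipschitz and $\pi(\gamma)$ being a uniform quasigeodesic in the hyperbolic space $\FF$ (Lemma~\ref{dt_4.2} and the discussion of $\S\ref{ss:Projection}$); the remaining short intervals near $Z_1$ and $Z_2$ are then controlled by the endpoint bounds of Step~1 directly.
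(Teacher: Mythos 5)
Your proof has the same overall skeleton as the paper's — apply the bounded geodesic image property to get endpoints $Z_1, Z_2$ on $[X,Y]$ close to $\gamma(t_1), \gamma(t_2)$, establish that all of $[Z_1,Z_2]$ lies in a uniform $\ds$-neighborhood of $\gamma$, and then feed this into Lemma~\ref{l:fell} — but the way you establish the middle step is genuinely different. The paper packages that step into Lemma~\ref{l:morse}: the concatenation $[\gamma(t_1),Z_1]\cup[Z_1,Z_2]\cup[Z_2,\gamma(t_2)]$ is a $\ds$-quasigeodesic because the two attached arcs are uniformly short, and then the Morse-type result \cite[Theorem 4.1]{dt17} (which applies since $\pi\circ\gamma$ is a quasigeodesic in $\FF$) gives Hausdorff closeness of $[Z_1,Z_2]$ to the segment $[\gamma(t_1),\gamma(t_2)]$ in one stroke. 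You instead re-derive the Morse property by bootstrapping Theorem~\ref{t:BGIP} at interior points $W\in[Z_1,Z_2]$: splitting $[X,Y]$ at $W$, applying BGIP to each half, and using additivity of $\dL$ along the geodesic to conclude $\ds(W,\Pr_\gamma(W))\le 6C_2$. That core computation is correct, and it trades a citation for a self-contained argument, which is illuminating. The price is the boundary analysis you flag in your final paragraph: you must show (i) that $\Pr_\gamma(W)$ stays coarsely confined to $[\gamma(t_1),\gamma(t_2)]$ as $W$ traverses $[Z_1,Z_2]$, and (ii) that the short intervals of $[Z_1,Z_2]$ whose projections fall within the $C_1$-buffer of either endpoint are themselves $\ds$-close to $\gamma$. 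You indicate the right ingredients (Lemma~\ref{dt_4.2}, $\pi\circ\gamma$ a parameterized quasigeodesic in $\FF$, endpoint bounds from Step~1), and these do suffice, but the argument is left at sketch level precisely where the paper's route via Lemma~\ref{l:morse} eliminates the case split entirely. Both approaches are sound; the paper's is cleaner, yours is more self-contained and makes explicit the standard BGIP-implies-Morse mechanism that Lemma~\ref{l:morse} encapsulates.
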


To prove the corollary, we need the following lemma.

\begin{lem}\label{l:morse}
Let $K\ge 0$ and $C\ge 0$. Then there exists a constant $A'=A'(K,C)>0$ with the following property. Let $\gamma$ be a geodesic in $\cv$ such that $\pi\circ \gamma$ is a $K$-quasigeodesic in $\FF$.  Let $P,Q\in \gamma$ and $Z_1,Z_2\in \cv$ be such that $\ds(Z_1,P)\le C$ and $\ds(Z_2,P)\le C$. Let $[P,Q]$ denote the segment of $\gamma$ from $P$ to $Q$. Then for any geodesic segment $[Z_1,Z_2]$ in $\cv$, we have that the segments $[P,Q]$ and $[Z_1,Z_2]$ are $A$-Hausdorff close with respect to $\ds$.
\end{lem}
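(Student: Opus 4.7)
The plan is to project everything to the hyperbolic free factor graph $\FF$ via $\pi$, apply the Morse lemma there, and then use the contracting behaviour of $\gamma$ in $\cv$ to lift the resulting fellow-travelling back to the symmetric Lipschitz metric $\ds$ on $\cv$.

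First I would set up the picture in $\FF$. Since $\pi\colon(\cv,\ds)\to(\FF,d_{\FF})$ is coarsely $L_{\cv}$--Lipschitz, the hypothesis that the endpoints of $[Z_1,Z_2]$ are within $\ds$--distance $C$ of the corresponding endpoints of $[P,Q]$ translates to $\pi(Z_1)$ and $\pi(Z_2)$ being within $L_{\cv}C + O(1)$ of $\pi(P)$ and $\pi(Q)$ in $\FF$. By Bestvina--Feighn, $\pi([Z_1,Z_2])$ is a uniform $Q$--quasigeodesic in $\FF$, while $\pi([P,Q])$ is a subsegment of the $K$--quasigeodesic $\pi\circ\gamma$. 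The Morse lemma in the $\delta$--hyperbolic space $\FF$ then yields that these two quasigeodesics are Hausdorff-close in $\FF$ with constant $H = H(K,Q,C,\delta)$.

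Next, for any $W\in[Z_1,Z_2]$, Lemma~\ref{dt_4.2} implies that $\pi(\Pr_\gamma(W))$ is $\mathfrak{c}$--close in $\FF$ to $\mathbf{n}_{\pi\circ\gamma}(\pi(W))$. The Morse step from the previous paragraph forces this nearest-point projection to lie within bounded $d_{\FF}$--distance of $\pi([P,Q])$, and since $\pi\circ\gamma$ is a $K$--quasigeodesic, $\Pr_\gamma(W)$ itself lies within uniform $\ds$--distance of the subsegment $[P,Q]$ along $\gamma$. Symmetrically, for each $V\in[P,Q]$, the point $\pi(V)\in\pi\circ\gamma$ is forced to be close in $\FF$ to some $\pi(W)$ with $W\in[Z_1,Z_2]$, and the same projection machinery controls where such a $W$ sits on $\gamma$.

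The main obstacle is promoting $\FF$--closeness to actual $\ds$--closeness in $\cv$, since $\pi$ is highly non-proper. Here I would invoke the strong contracting property of $\gamma$ in $(\cv,\ds)$, which is available precisely because $\pi\circ\gamma$ is a quasigeodesic in $\FF$ (this is the content, for fully irreducible axes, of Theorem~\ref{t:BGIP} and underlies the work of \cite{a08,DahmaniHorbez}). By the standard Morse-type argument from strong contraction, if some $W\in[Z_1,Z_2]$ had $\ds(W,\gamma)$ very large then a large $\ds$--ball around $W$ disjoint from $\gamma$ would have bounded-diameter image under $\Pr_\gamma$, preventing the endpoint projections $\Pr_\gamma(Z_1)$ and $\Pr_\gamma(Z_2)$ from being far apart along $\gamma$ as dictated by the previous step. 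This gives a contradiction when $\ds(P,Q)$ is sufficiently large; the remaining case where $\ds(P,Q)$ is small is immediate from the triangle inequality, since then both segments have $\ds$--diameter at most $\ds(P,Q) + 2C$. Combining the contraction step with the $\FF$--level fellow-travelling and its symmetric counterpart yields the Hausdorff bound $A' = A'(K,C)$.
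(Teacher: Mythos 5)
The paper's own proof is a two-line argument: concatenate the short segments $[P,Z_1]$ and $[Z_2,Q]$ with the geodesic $[Z_1,Z_2]$ to obtain a path $\alpha$ from $P$ to $Q$, observe that since the attached segments have $\ds$--length at most $C$ the path $\alpha$ is a $C'$--quasigeodesic in $\cv$ for some $C'=C'(C)$, and then invoke \cite[Theorem~4.1]{dt17} --- a Morse-type stability theorem for $\cv$--geodesics whose $\pi$--image is a quasigeodesic in $\FF$ --- to conclude that $\alpha$ and $[P,Q]$ are $A$--Hausdorff close in $\ds$. Your argument takes a genuinely different and considerably more involved route: project everything to $\FF$, run the Morse lemma there, and then try to promote $\FF$--closeness back to $\ds$--closeness in $\cv$ via a contraction argument.

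The back-promotion step is where the gaps are. First, you invoke ``strong contracting'' via Theorem~\ref{t:BGIP}, but that theorem is established only for greedy folding axes of fully irreducible outer automorphisms, while the $\gamma$ in this lemma is merely assumed to be a geodesic with $\pi\circ\gamma$ a $K$--quasigeodesic; the hypotheses do not match (even if in the eventual application $\gamma$ happens to be such an axis, the lemma is stated and used in the general form). Second, the bounded geodesic image property as defined in Definition~\ref{d:Definition} constrains geodesic segments $[X,Y]$ between points far apart on $\gamma$; it is not the ``a large $\ds$--ball disjoint from $\gamma$ has bounded $\Pr_\gamma$--image'' statement your Morse-type argument appeals to, and deriving the latter from the former is not free. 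Third, and most seriously, this paper itself flags in \S\ref{s:Section} that the usual equivalences between contraction, BGI, and Morse behave poorly in asymmetric metric spaces and require ``additional, situation specific arguments''; your ``standard Morse-type argument from strong contraction'' glosses over precisely this difficulty, which is exactly what the black-box citation of \cite[Theorem~4.1]{dt17} is there to handle. In effect you are re-deriving the content of that theorem from scratch, and the rederivation is incomplete where it matters. The paper's route --- form a quasigeodesic in $\cv$ directly and cite the stability theorem once --- avoids all of this.
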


\begin{proof}
Consider the geodesic segments $[P,Z_1]$, $[Z_2,Q]$ and the path $\alpha=[P,Z_1]\cup [Z_1,Z_2]\cup [Z_2,Q]$. Since $\ds(Z_1,P)\le C$ and $\ds(Z_2,P)\le C$, the path $\alpha$ is a $C'$-quasigeodesic in $\cv$ for some constant $C'\ge 0$ depending only on $C$. Therefore, by  \cite[Theorem 4.1]{dt17}, for some constant $A=A(C',K)\ge 0$, the paths $\alpha$ and $[P,Q]$ are $A$-Hausdorff close with respect to $\ds$. It follows that $[P,Q]$ and $[Z_1,Z_2]$ are $(A+C)$-close, and the lemma holds with $A'=A+C$.
\qedhere
\end{proof}

We can now prove Corollary \ref{cor:BGI}.

\begin{proof}[Proof of Corollary \ref{cor:BGI}]
By the bounded geodesic image property for $\gamma$, there exists a subsegment $[Z_1,Z_2]$ in $[X,Y]$ such that $\ds(\Pr_\gamma(X), Z_1)\le C_2$ and $\ds(\Pr_\gamma(Y), Z_2)\le C_2$. Since $\gamma$ is the axis of a fully irreducible, $\pi\circ \gamma$ is a $K$-quasigeodesic in $\FF$ for some $K\ge 0$. Thus, by Lemma~\ref{l:morse}, the segments $[Z_1,Z_2]$ and $[\Pr_\gamma(X), \Pr_\gamma(Y)]=[\gamma(t_1), \gamma(t_2)]$ are $A'$-Hausdorff close with respect to $\ds$.

By the triangle inequality we have $\dL(Z_1,Z_2)\ge R-2C_2$. Then by Lemma~\ref{l:fell} the segment $[Z_1,Z_2]$ has a subsegment of length $\ge R-2C_2-cC_2$ that $cC_2$ fellow travels a segment of $\gamma$ of the same length (where $c$ is the constant depending on thickness of $\gamma$ provided by Lemma~\ref{l:fell}).
Therefore the conclusion of the corollary holds with $\rho=cC_2$ and $c_0=2C_2+cC_2$
\end{proof}

\section{Random outer automorphisms are triangular}

Let $\vphi$ be a principal fully irreducible outer automorphism (which exists by Proposition \ref{p:PrincipalExist}) and let $\gamma$ be its lone folding axis (see \S \ref{ss:principal}). Evidently, $\gamma$ is both a greedy and a simple folding path.

By Theorem \ref{t:BGIP}, there are constants $C_1, C_2 \ge0$ such that $\gamma$ is $(C_1,C_2)$--contracting. Let $\rho \ge 0$ be the constant given by Corollary \ref{cor:BGI}. With this set-up we have the following statement:

\begin{prop} \label{prop:proj_tri}
There exists a constant $R_1 \ge0$ satisfying the following: Let $\psi$ be a fully irreducible outer automorphism with simple folding axis $\gamma'$. Suppose that the $\Pr_\gamma$--projection of $\gamma'$ to $\gamma$ has diameter $\ge R_1$ and that, using the orientations on these paths, the left end of $\gamma'$ projects to the left of the right end of $\gamma'$. Then $\psi$ is triangular.
\end{prop}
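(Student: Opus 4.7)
The plan is to combine the bounded geodesic image property from Corollary~\ref{cor:BGI} with the stability of the principal property from Corollary~\ref{cor:close_enough}. We first calibrate the constants. With $\rho\ge 0$ as fixed before the proposition, let $R_{\mathrm{tri}}\ge 1$ be the value given by Corollary~\ref{cor:close_enough} applied to the principal $\vphi$ and $\rho$, so that any fully irreducible outer automorphism with an axis in $B(\gamma,R_{\mathrm{tri}},\rho)$ is triangular. Let $C_1$ and $c_0$ be the constants provided by Corollary~\ref{cor:BGI} for $\gamma$, and choose $R_1$ sufficiently larger than $R_{\mathrm{tri}}+c_0+C_1$ to absorb the small additive errors appearing below.

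The next step is to extract points $X=\gamma'(s_1)$ and $Y=\gamma'(s_2)$ on $\gamma'$ with $s_1<s_2$ such that the $\Pr_\gamma$-projections $\gamma(t_1)$, $\gamma(t_2)$ satisfy $t_1<t_2$, $t_2-t_1\ge R_{\mathrm{tri}}+c_0$, and $\ds(\gamma(t_1),\gamma(t_2))\ge C_1$. The orientation hypothesis, together with the coarse monotonicity of nearest-point projections onto the uniform $Q$-quasigeodesic $\pi\circ\gamma$ in $\FF$ and Lemma~\ref{dt_4.2}, ensures that as one moves along $\gamma'$ in its positive direction the image $\Pr_\gamma(\gamma')$ moves coarsely in the positive direction along $\gamma$. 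Because $\gamma$ lies in a single $\vphi$-orbit and hence in a thick part of $\cv$, the distances $\ds$ and $\dL$ are comparable along $\gamma$, and $\dL(\gamma(t_1),\gamma(t_2))=t_2-t_1$; so the diameter bound $R_1$ (chosen large enough) furnishes the required pair $(X,Y)$. Since $\gamma'$ is a simple periodic fold line, Remark~\ref{r:sg} says it is a directed geodesic in $(\cv,\dL)$, so $[X,Y]\subset \gamma'$ is a geodesic segment, and Corollary~\ref{cor:BGI} applied to this pair produces a subsegment of $[X,Y]$ of length at least $(t_2-t_1)-c_0\ge R_{\mathrm{tri}}$ that $\rho$-fellow travels a subsegment of $\gamma$ of the same length.

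By Definition~\ref{d:SpaceOfGeodesics}(a), the above places $\gamma'\in B(\gamma,R_{\mathrm{tri}},\rho)$, so Corollary~\ref{cor:close_enough} immediately yields that $\psi$ is triangular. The main technical step, and the one that requires genuine work, is the second paragraph: extracting an honest parameter interval on $\gamma'$ whose endpoints project to $\gamma$ in the correct order with the prescribed separation. The projection $\Pr_\gamma$ need not be continuous nor strictly monotone along $\gamma'$, so one must really use the orientation hypothesis together with the coarse geometry of nearest-point projections in the hyperbolic space $\FF$ to convert the diameter bound on the image into a long, orientation-preserving parameter window on $\gamma'$. Once this is secured, the remaining steps are direct applications of Corollaries~\ref{cor:BGI} and~\ref{cor:close_enough}.
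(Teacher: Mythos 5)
Your proposal is correct and follows exactly the route the paper takes: the paper's own proof of this proposition is the single sentence ``The conclusion follows directly from Corollary~\ref{cor:close_enough} and Corollary~\ref{cor:BGI},'' and you have faithfully unpacked that sentence — calibrating $R_1$ against the constants of Corollary~\ref{cor:BGI} and the threshold of Corollary~\ref{cor:close_enough}, extracting an oriented pair $X,Y$ on $\gamma'$ with suitably separated $\Pr_\gamma$-projections (using thickness of $\gamma$ to convert the $\ds$-diameter bound into a parameter separation, and the orientation hypothesis plus coarse monotonicity of projection in $\FF$ to get the ordering right), applying Corollary~\ref{cor:BGI} to the geodesic subsegment $[X,Y]\subset\gamma'$ to obtain a long $\rho$-fellow-traveling window, and then concluding via $\gamma'\in B(\gamma,R_{\mathrm{tri}},\rho)$ and Corollary~\ref{cor:close_enough}. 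You are also right to flag the coarse-monotonicity step as the one requiring care; the paper itself suppresses this detail entirely, so your write-up is in fact more explicit than the original.
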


\begin{proof}
The conclusion follows directly from  Corollary \ref{cor:close_enough} and Corollary \ref{cor:BGI}.
\end{proof}

Proposition~\ref{prop:proj_tri} is a key technical result that combines the full strength of the bounded geodesic image property (Corollary \ref{cor:BGI}) and of the ``stability" result for triangular and principal outer automorphisms (Corollary \ref{cor:close_enough}).

We now turn to the proof of our main results stated in the introduction. To do so, we freely use the constants collected in \S \ref{sec:ff}. To simplify notation, we denote the $\pi$--image of a point $x$ in $\cv$ by $\overline x \subset \FF$.

\begin{theore}\label{t:main}
Let $r\ge 3$ and let $\mu$ be a probability distribution on
$\Out(F_r)$ which is nonelementary and has bounded support for the action on $\FF$ and let
$(w_n)$ be the random walk determined by $\mu$. Suppose
that $\smgp$ contains $\vphi^{-1}$ for some principal fully irreducible $\varphi\in\out$. Then $w_n$ is
triangular fully irreducible outer automorphism with probability tending to
$1$ as $n \to \infty$.
\end{theore}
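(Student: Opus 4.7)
My plan is to combine the oriented-matching produced by Proposition~\ref{prop:MS} with the triangularity criterion Proposition~\ref{prop:proj_tri}. The basic idea is that the hypothesis $\vphi^{-1}\in\smgp$ lets us apply Proposition~\ref{prop:MS} to get, with asymptotic probability one, a long oriented match between a quasi-axis of $w_n$ in $\FF$ and a translate of a quasi-axis of $\vphi^{-1}$; using Lemma~\ref{dt_4.2} this $\FF$-level matching can then be transferred into a long ${\Pr}_\gamma$-projection of the simple folding axis of $w_n$ onto a translate of $\gamma$, which by Proposition~\ref{prop:proj_tri} forces $w_n$ to be triangular.

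First I would note that by the results of Rivin and Maher applied to the nonelementary action on $\FF$, the element $w_n$ acts loxodromically on $\FF$ with probability tending to $1$, hence is fully irreducible by Bestvina--Feighn; Remark~\ref{r:simpleaxes} then supplies $w_n$ with a simple folding axis $\sigma_n$ in $\cv$. Fix the lone axis $\gamma$ of the principal $\vphi$ in $\cv$, the threshold $R_1$ from Proposition~\ref{prop:proj_tri}, a constant $\kappa\ge\kappa_0$, and a length parameter $L$ large enough for the conversion described below. Applying Proposition~\ref{prop:MS} with $X=\FF$ and $g=\vphi^{-1}\in\smgp$, we conclude that with probability tending to $1$ the quasi-axis $\alpha_{w_n}$ has an $(L,\kappa)$--oriented match with a translate $h\cdot\alpha_{\vphi^{-1}}$. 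Since $\pi\circ\gamma$ is a quasi-axis of $\vphi^{-1}$ in $\FF$, we may take this translate to be $h\cdot\pi\circ\gamma$, so that a subsegment of $\pi\circ\sigma_n$ of length at least $L$ lies in the $\kappa$--neighbourhood of $h\cdot\pi\circ\gamma$ with compatible orientations; this is the point at which the hypothesis $g=\vphi^{-1}$ (rather than $g=\vphi$) enters, by matching the natural translation direction on $\alpha_{\vphi^{-1}}$ with the fold-line orientation on $\gamma$.

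To promote this $\FF$-level matching to the $\cv$-level hypothesis of Proposition~\ref{prop:proj_tri}, I would take $X,Y\in\sigma_n$ to be the endpoints of the matched subpath and compare the projections. By Lemma~\ref{dt_4.2}, $\pi\circ{\Pr}_{h\cdot\gamma}(X)$ and $\pi\circ{\Pr}_{h\cdot\gamma}(Y)$ lie within $\mathfrak{c}$ of the $\FF$-nearest-point projections of $\pi(X),\pi(Y)$ to $h\cdot\pi\circ\gamma$, and these are $\FF$-separated by at least $L-O(1)$. Since $\pi\circ\gamma$ is a $Q$-quasigeodesic, this separation forces ${\Pr}_{h\cdot\gamma}(X)$ and ${\Pr}_{h\cdot\gamma}(Y)$ to be separated along $h\cdot\gamma$ by at least $(L-O(1))/Q$ in $\cv$, which exceeds $R_1$ for $L$ sufficiently large. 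Applying Proposition~\ref{prop:proj_tri} to $\psi=w_n$, $\gamma'=\sigma_n$, and to the lone axis $h\cdot\gamma$ of the principal $h\vphi h^{-1}$ then concludes that $w_n$ is triangular with asymptotic probability one.

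The hard part will be the last passage from the oriented $\FF$-match to a $\cv$-level projection statement in the exact form demanded by Proposition~\ref{prop:proj_tri}: one must simultaneously control the coarse behavior of $\pi$, of ${\Pr}$, and of $\FF$-nearest-point projections, and verify that the natural orientations on $\sigma_n$, on $\gamma$, and on quasi-axes in the symmetric hyperbolic space $\FF$ combine so that the ``left end projects to the left of the right end'' condition is satisfied for $\sigma_n$ projected onto $h\cdot\gamma$. The orientation bookkeeping is precisely where the asymmetry between the hypotheses $\vphi\in\smgp$ and $\vphi^{-1}\in\smgp$ manifests, and must be done carefully using the conventions established in Remark~\ref{r:RL}.
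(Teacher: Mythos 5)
Your proposal follows essentially the same route as the paper: apply Proposition~\ref{prop:MS} to get a long oriented $\FF$-match with a translate of $\pi\circ\gamma$ (using $\vphi^{-1}\in\smgp$ and Remark~\ref{r:RL} for orientation), transfer it to a long ${\Pr}_\gamma$-projection via Lemma~\ref{dt_4.2}, and invoke Proposition~\ref{prop:proj_tri}. The only cosmetic differences are that the paper uses the Lipschitz constant $L_{\cv}$ of $\pi$ rather than the quasigeodesic constant $Q$ to pass from $\FF$-separation to $\cv$-separation, and translates $\gamma_{w_n}$ onto $\gamma$ rather than $\gamma$ onto $\sigma_n$ (equivalent by equivariance); the orientation bookkeeping you flag as ``the hard part'' is indeed exactly what Remark~\ref{r:RL} settles, and the paper dispatches it in a single sentence.
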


\begin{proof}

We work with the constants established at the beginning of this section. As before, we apply Proposition \ref{prop:MS} to the action $\Out F_r \curvearrowright \FF$ with $Q$ equal to the (unparameterized) quasigeodesic constants for the image of a geodesic in $\cv$. We set $L = L_{\cv}\cdot (R_1+2\kappa +2 \mathfrak{c}+1)$, for $R_1$ as in  Proposition \ref{prop:proj_tri} and $\mathfrak{c}$ as in Proposition \ref{dt_4.2}.

Let $\gamma(t)$ be the (lone) axis in $\cv$ of our principal fully irreducible $\vphi$. By Remark~\ref{r:RL}, for the left action of $\out$ on $\cv$, iteration of $\vphi^{-1}$ on $\gamma$ translates in the direction of $t\to\infty$ along $\gamma$.  Then $\overline\gamma(t)$ is a directed quasigeodesic axis for $\vphi^{-1}$ in $\FF$ (again with respect to the left action of $\out$).
 Thus we can apply  Proposition \ref{prop:MS} to $\overline\gamma$ since, by assumption, $\vphi^{-1}\in\smgp$.

Letting $\gamma_{w_n}$ denote a simple folding axis for $w_n$ (as $w_n$ is fully irreducible with probability tending to $1$ as $n\to \infty$), it follows by  Proposition \ref{prop:MS}  that $\overline \gamma_{w_n}$ has an $(L, \kappa)$--oriented match with $\overline \gamma$. By the hyperbolic geometry of $\FF$, this means that there is a $g_n \in \Out(F_r)$ such that the nearest point projection in $\FF$ of $g_n \cdot \overline \gamma_{w_n}$ to $\overline \gamma$ has diameter at least $L_{\cv}\cdot (R_1 +2 \mathfrak{c}+1)$. If we instead use the projection $\pi \circ \Pr_\gamma$, we find that the diameter of the image of the projection is at least $L_{\cv} \cdot (R_1+1)$ (see Proposition \ref{dt_4.2}). Further, since the match is oriented,  we have that the left end of $\overline \gamma_{w_n}$ projects to the left of its right end, along $\gamma$.
Finally, we use the fact that $\pi \colon \cv \to \FF$ is $L_{\cv}$--Lipschitz to conclude that the $\Pr_\gamma$--projection of $\gamma_{w_n}$ to $\gamma$ has diameter at least $R_1$. Hence, from Proposition~\ref{prop:proj_tri}, we conclude that $w_n$ is triangular.
\end{proof}

\stepcounter{theore}

In the following corollary $i(\vphi_n)$ is the rotationless index of $\vphi_n$ (see \S \ref{ss:indices}) and $ind_{geom} (T_+^{\vphi_n})$ is the geometric index of $\vphi_n$ (see \cite{ch12}).

\begin{coroll}\label{c:rw}
Let $r\ge 3$ and let $\mu$ be a probability distribution on
$\Out(F_r)$ which is nonelementary and has bounded support for the action on $\FF$, and let
$(w_n)$ be the random walk determined by $\mu$.  Suppose
that $\smgp$ contains $\vphi^{-1}$ for some
principal fully  irreducible $\vphi\in\out$.

Then, with
probability going to $1$ as $n \to \infty$, $w_n$ is an \emph{ageometric}
fully irreducible outer automorphism $\vphi_n$ satisfying:
\begin{enumerate}
\item each component of the ideal Whitehead graph $\iwp$ is a triangle;
\item each branchpoint of $T_+^{\vphi_n}$ is trivalent;
\item $i(\vphi_n)>1-r$ and $ind_{geom} (T_+^{\vphi_n})<2r-2$;
\item the tree $T_+^{\vphi_n}$ is nongeometric.
\end{enumerate}
\end{coroll}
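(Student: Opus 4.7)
The plan is to derive all four conclusions as essentially bookkeeping consequences of Theorem~\ref{t:main}. By that theorem, with probability tending to $1$ as $n \to \infty$ the element $w_n =: \vphi_n$ is triangular fully irreducible. Since ``triangular'' is defined only for ageometric fully irreducibles (cf.\ Remark~\ref{rk:lone1}), this immediately gives that $\vphi_n$ is ageometric fully irreducible. By the definition of triangular, $IW(\vphi_n)$ is a disjoint union of $k$ triangles with $1 \le k \le 2r-3$, which is exactly conclusion~(1). Conclusion~(2) is then Proposition~\ref{p:trivalent} applied to $\vphi_n$.

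For~(3), I would compute the rotationless index directly from the definition in \S\ref{ss:indices}: since each of the $k$ triangular components of $IW(\vphi_n)$ has $3$ vertices and therefore contributes $1 - 3/2 = -1/2$, one has $i(\vphi_n) = -k/2$, and $k \le 2r-3$ yields
\[
i(\vphi_n) \ge \tfrac{3}{2} - r > 1 - r,
\]
using $r \ge 3$. For the geometric index, I would combine the Coulbois--Hilion identity $\mathrm{ind}_{\rm geom}(T) = \sum_{[p]} (\val_T(p) - 2)$ (the sum being over a fundamental domain of branch points) with the bijection supplied by Proposition~\ref{p:br} between components of $IW(\vphi_n)$ and $F_r$-orbits of branch points of $T_+^{\vphi_n}$, together with the trivalence supplied by~(2). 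Each of the $k$ orbits then contributes $3 - 2 = 1$, giving $\mathrm{ind}_{\rm geom}(T_+^{\vphi_n}) = k \le 2r-3 < 2r-2$.

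For~(4), I would invoke the equivalence recalled in the introduction: for a nongeometric fully irreducible $\psi$, the tree $T_+^{\psi}$ is geometric in the Levitt--Paulin sense if and only if $i(\psi) = 1-r$. Since $i(\vphi_n) > 1-r$ by~(3), this forces $T_+^{\vphi_n}$ to be nongeometric. Alternatively, one could use directly that $\mathrm{ind}_{\rm geom}(T) = 2r-2$ characterizes geometric trees among minimal very small $F_r$-trees with dense orbits, which together with the bound already established in~(3) again gives nongeometricity. There is no real obstacle beyond Theorem~\ref{t:main} itself: the corollary is an unpacking of the definitions of triangular, of $i(\vphi_n)$, and of $\mathrm{ind}_{\rm geom}$, combined with Propositions~\ref{p:trivalent} and~\ref{p:br}. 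The only step requiring care is the identification $\mathrm{ind}_{\rm geom}(T_+^{\vphi_n}) = k$, which relies on Proposition~\ref{p:br} being set up so that orbits of branch points in $T_+^{\vphi_n}$ correspond exactly to components of $IW(\vphi_n)$.
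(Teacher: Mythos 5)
Your proposal is correct, and its overall skeleton matches the paper's proof: cite Theorem~\ref{t:main} to get that $w_n$ is triangular fully irreducible with high probability, then unpack the consequences. Where you diverge is in how (3) and (4) are obtained. The paper's proof is terser: it observes that triangular implies ageometric, reads off nongeometricity of $T_+^{\vphi_n}$ directly from the \emph{definition} of ageometric (\S\ref{ss:ageo}) rather than passing back through the index, cites the index characterization of ageometric from \S\ref{ss:indices} to get $i(\vphi_n)>1-r$ without recomputing it, and cites~\cite{ch12} for the general fact that nongeometric very small $F_r$-trees have $\mathrm{ind}_{\rm geom}<2r-2$. You instead compute $i(\vphi_n)=-k/2\ge \tfrac{3}{2}-r$ directly from the triangle structure of $IW(\vphi_n)$, derive $\mathrm{ind}_{\rm geom}(T_+^{\vphi_n})=k\le 2r-3$ via Proposition~\ref{p:br} and the valency formula for the geometric index, and then deduce (4) from $i(\vphi_n)>1-r$. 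Your route is more self-contained and makes the numerics explicit, at the cost of reproving facts the paper prefers to cite; in particular, your derivation of (4) via the index is correct but redundant, since ageometric is \emph{defined} to mean $T_+^{\vphi_n}$ is nongeometric, which the paper uses directly. One minor point of care in your $\mathrm{ind}_{\rm geom}$ computation: Proposition~\ref{p:br} as stated gives surjectivity from equivalence classes of principal lifts onto $F_r$-orbits of branch points, which yields $\mathrm{ind}_{\rm geom}(T_+^{\vphi_n})\le k$, and that inequality already suffices for $\mathrm{ind}_{\rm geom}<2r-2$; asserting equality requires an injectivity that the stated proposition does not supply (though it is true under the no-PNP hypothesis).
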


\begin{proof}
Theorem~\ref{t:main} implies that, with
probability going to $1$ as $n \to \infty$, the outer automorphism $\vphi_n$ is triangular fully irreducible. Thus, in particular, $\vphi_n$ is ageometric, so that the tree  $T_+^{\vphi_n}$ is nongeometric and $i(\vphi_n)>1-r$. Also, for nongeometric trees we have $ind_{geom} (T_+^{\vphi_n})<2r-2$ (see \cite[p. 300]{ch12} for more details).
Proposition~\ref{p:trivalent} now implies that $T_+^{\vphi_n}$ is trivalent.
\end{proof}

\stepcounter{theore}

\begin{coroll}\label{c:two}
Let $r\ge 3$ and let $\mu$ be a probability distribution on
$\Out(F_r)$ which is nonelementary and has bounded support for the action on $\FF$, and let $(w_n)$ be the random walk determined by $\mu$. Suppose that $\smgp$ contains a pair
of elements $\vphi$ and $\psi$, such that both $\vphi$ and $\psi^{-1}$ are principal fully irreducible outer automorphisms.

Then, with probability approaching $1$ as
$n \to \infty$, both $w_n$ and $w_n^{-1}$ are triangular fully irreducible outer automorphisms. In particular, both fixed trees of $w_n$ are trivalent.
\end{coroll}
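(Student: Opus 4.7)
The plan is to apply Theorem~\ref{t:main} twice, once to the forward walk $(w_n)$ driven by $\mu$ and once to the backward walk $(w_n^{-1})$, viewed as a random walk driven by the reflected measure $\check\mu$. A union bound will then give that both $w_n$ and $w_n^{-1}$ are triangular fully irreducible with probability tending to $1$, and Proposition~\ref{p:trivalent} will upgrade triangularity to trivalence of the corresponding attracting trees. Since $T_-^{w_n}=T_+^{w_n^{-1}}$, this will yield trivalence of both fixed trees of $w_n$.

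For the forward walk, set $\vphi_0=\psi^{-1}$: by hypothesis $\vphi_0$ is principal fully irreducible, and $\vphi_0^{-1}=\psi\in\smgp$, so Theorem~\ref{t:main} applies verbatim and gives that $w_n$ is triangular with probability approaching $1$. For the backward walk, note first that if $g_1,g_2,\ldots$ are i.i.d.\ $\mu$, then $w_n^{-1}=g_n^{-1}\cdots g_1^{-1}$ has the same law as the $n$-th step of the random walk driven by $\check\mu$. It therefore suffices to check that $\check\mu$ satisfies the hypotheses of Theorem~\ref{t:main}. Nonelementarity is preserved under inversion because a loxodromic isometry of $\FF$ and its inverse share the same fixed-point pair in $\partial\FF$, so a pair of independent loxodromics in $\smgp$ yields a pair of independent loxodromics in $\langle\mathrm{supp}(\check\mu)\rangle_+$. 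Bounded support in $\FF$ is preserved because $d_\FF$ is symmetric. Finally, since $\vphi\in\smgp$ is principal, $\vphi^{-1}\in\langle\mathrm{supp}(\check\mu)\rangle_+$ is the inverse of a principal outer automorphism, as required. Therefore Theorem~\ref{t:main} applies and gives that $w_n^{-1}$ is triangular with probability approaching $1$.

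Combining the two conclusions by a union bound, with probability tending to $1$ both $w_n$ and $w_n^{-1}$ are simultaneously triangular fully irreducible; Proposition~\ref{p:trivalent} then implies that both $T_+^{w_n}$ and $T_+^{w_n^{-1}}=T_-^{w_n}$ are trivalent. The only step requiring genuine care is the verification above that $\check\mu$ inherits each of the hypotheses of Theorem~\ref{t:main} from $\mu$, which rests on the symmetry of $d_\FF$ and on the fact that inversion preserves fixed-point sets of loxodromics on $\partial\FF$; the remainder of the argument is formal.
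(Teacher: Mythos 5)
Your proof is correct and follows essentially the same route as the paper's: apply Theorem~\ref{t:main} once to $\mu$ (using $\psi^{-1}$ as the principal element) and once to the reflected measure $\check\mu$ (using $\vphi$), then combine by a union bound and invoke Proposition~\ref{p:trivalent}. The only difference is cosmetic: you spell out why $\check\mu$ inherits the nonelementarity, bounded-support, and principal-inverse hypotheses, whereas the paper simply asserts that "Theorem~\ref{t:main} applies to $\check\mu$ as well."
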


\begin{proof}
Let $\mathcal{T} \subset \Out(F_r)$ be the subset consisting of the triangular fully irreducible elements.
By Theorem \ref{t:main}, $\mu^{n}(\mathcal{T}) \to 1$. Recall that
$\check \mu$ denotes the reflected measure,
$\check \mu (g) = \mu (g^{-1})$, so the same theorem implies that
$\check \mu^{n}(\mathcal{T}) \to 1$. But the distribution of the
random element $w_n^{-1}$ is $\check \mu^n$, and Theorem~\ref{t:main} applies to $\check \mu$ as well. Therefore the probability
that both $w_n$ and $w_n^{-1}$ lie in $\mathcal{T}$ tends to $1$ as
$n\to\infty$.
\end{proof}


\vskip1pt
\bibliographystyle{alpha}
\bibliography{References}

\end{document}